\documentclass[10pt,twoside]{article}

\usepackage{amsmath}
\usepackage{amsfonts}
\usepackage{amssymb}
\usepackage{amscd}
\usepackage{amsthm}
\usepackage{amsbsy}
\usepackage{graphicx}
\usepackage{bm}
\usepackage[linktoc=all]{hyperref}
\usepackage{pgf,tikz}
\usepackage{mathrsfs}
\usetikzlibrary{arrows}
\usepackage{makeidx}
\usepackage[nottoc]{tocbibind}

\textwidth 125truemm
\textheight 195truemm
\paperwidth 169truemm
\oddsidemargin 3mm
\evensidemargin 3mm
\headsep 6mm
\footskip 11mm
\baselineskip 4.5mm

\def\@oddhead{\hfill \shorttitle \hfill \thepage}
\def\@evenhead{\thepage \hfill \shortauthor \hfill}
\def\@oddfoot{}
\def\@evenfoot{}

\theoremstyle{plain}
\newtheorem{proposition}{Proposition}[section]
\newtheorem{theorem}[proposition]{Theorem}
\newtheorem{conjecture}[proposition]{Conjecture}
\newtheorem{lemma}[proposition]{Lemma}
\newtheorem{corollary}[proposition]{Corollary}

\theoremstyle{definition}
\newtheorem{definition}[proposition]{Definition}

\theoremstyle{remark}
\newtheorem{remark}[proposition]{Remark}
\newtheorem{example}[proposition]{Example}
\newtheorem{question}[proposition]{Question}
\newtheorem{examples}[proposition]{Examples}

\everymath{\displaystyle}

\DeclareMathOperator{\Conv}{Conv}
\DeclareMathOperator{\Isom}{Isom}
\DeclareMathOperator{\Id}{Id}
\DeclareMathOperator{\rank}{rank}
\DeclareMathOperator{\Lk}{Lk}
\DeclareMathOperator{\Or}{O}
\newcommand{\R}{\mathbb{R}}                          
\newcommand{\C}{\mathbb{C}}                          
\newcommand{\N}{\mathbb{N}}                          
\newcommand{\Z}{\mathbb{Z}}  
\newcommand{\bo}{\partial}  
\DeclareMathOperator{\GL}{GL}
\DeclareMathOperator{\SL}{SL}
\DeclareMathOperator{\SU}{SU}
\DeclareMathOperator{\Sp}{Sp}
\DeclareMathOperator{\F}{F}
\DeclareMathOperator{\PSL}{PSL}
\DeclareMathOperator{\SO}{SO}
\DeclareMathOperator{\Prob}{Prob}
\DeclareMathOperator{\Aut}{Aut}

\newcommand{\action}{\curvearrowright}  
\makeindex
\date{}
\markboth{\hfill{\rm  B. Duchesne} \hfill}{\hfill {\rm Groups acting on spaces of non-positive curvature} \hfill}
\title{\ \\[0.4cm] \ \\ \bf  Groups acting on spaces of non-positive curvature}
\author{Bruno Duchesne\footnote{Institut \'Elie Cartan de Lorraine, Universit\'e de Lorraine, B.P. 70239, F-54506 Vandoeuvre-l\`es-Nancy Cedex, France. E-mail:  bruno.duchesne@univ-lorraine.fr. Supported in part by Projet ANR-14-CE25-0004 GAMME and R\'egion Lorraine.}\hspace{2mm}}
\begin{document}

\maketitle


\thispagestyle{empty}

\begin{abstract}
\vskip 3mm\footnotesize{
\vskip 4.5mm

\noindent
In this survey article, we present some panorama of groups acting on metric spaces of non-positive curvature. We introduce the main examples and their rigidity properties, we show the links between algebraic or analytic properties of the group and geometric properties of the space. We conclude with a few conjectures in the subject.

\vspace*{2mm} \noindent{\bf 2000 Mathematics Subject Classification: }20F65, 53C23, 20F67.

\vspace*{2mm} \noindent{\bf Keywords and Phrases: } Group actions, spaces of non-positive curvature, CAT(0) spaces, rigidity, superrigidity, amenability, Haagerup property, rank rigidity conjecture, flat closing conjecture.}
\end{abstract}
%
\tableofcontents
\section{Introduction}

According to the so-called Erlangen Program of  Felix Klein \cite{Klein:2008xg,MR3379703}, a \emph{geometry} is a set and a group that preserves some invariants. On the other hand, groups are actors and they are better understood via the study of their actions. In this paper, we aim to present the strong links between algebraic or analytic properties of groups and geometric properties of spaces of non-positive curvature on which the groups act. This is actually a part of \emph{Geometric Group Theory} and we recommend the survey \cite{MR1886672} for a panoramic view on that larger subject.

The metric spaces of non-positive curvature we have in mind are the so-called CAT(0) spaces. These are spaces with a metric condition meaning that the space is at least as non-positively curved as the Euclidean plane. The definition goes back to Alexandrov and Busemann in the 1950's and was popularised by Gromov who coined the name CAT(0) after Cartan, Alexandrov and Toponogov.

There are very good books about spaces of non-positive curvature. The current quite comprehensive reference is \cite{MR1744486}. The books \cite{MR1377265} and \cite{MR823981} are a bit older but still interesting and useful. For the study of convexity in non-positive curvature beyond CAT(0) spaces, \cite{MR3156529} is a good reference.

This paper is not another introduction to the subject but the aim is to guide the reader toward the main examples, some important results in the field and directions of current research. 

We  present the leading examples of CAT(0) spaces and groups acting on them: symmetric spaces, Euclidean buildings and CAT(0) cube complexes. After the basics about CAT(0) spaces and the description of these examples, we aim to give a flavour of the subject by treating a few topics.  The first topic deals with the question of generality/rigidity of the spaces associated to linear groups (that are symmetric spaces and Euclidean buildings). The second one deals with some analytic properties of groups: amenability, the Haagerup property, property (T) and amenability at infinity. Finally we discuss two conjectures: the rank rigidity conjecture and the flat closing conjecture. 

  The choice of topics is not exhaustive and is definitely subjective. We apologise for the important facts we may have missed. We do not enter into the details of each topic but we encourage the reader to continue his reading with references in the quite long bibliography. 
  
\section{Spaces of non-positive curvature}
\subsection{CAT(0) spaces}
A metric space $(X,d)$ is \emph{geodesic}\index{space!geodesic}\index{geodesic!space} if there is a geodesic segment joining any two points: For any $x,y\in X$, there exists $\gamma\colon[0,d(x,y)]\to X$ such that for any $t,t'\in[0,d(x,y)]$, $d(\gamma(t),\gamma(t'))=|t-t'|$, $\gamma(0)=x$ and $\gamma(d(x,y))=y$. A priori, geodesic segments between two points are not unique. For two points $x,y$,  a \emph{midpoint}\index{midpoint}  for $x$ and $y$ is a point $\gamma(d(x,y)/2)$ for some geodesic segment $\gamma$ between $x$ and $y$. Let us now define CAT(0) spaces via the so-called CAT(0) inequality
\begin{definition} A complete geodesic metric space is \emph{CAT(0)}\index{CAT(0)!space}\index{space!CAT(0)} if for any triple of points $x,y,z$ and a midpoint $m$ between $y$ and $z$, one has 

\begin{equation}\label{CAT(0)}d(x,m)^2\leq \frac{1}{2}(d(x,y)^2+d(x,z)^2)-\frac{1}{4}d(y,z)^2.\index{CAT(0)!inequality}\index{inequality!CAT(0)}\end{equation}
\end{definition}

\begin{remark}We use completeness and geodesicity in the definition of CAT(0) spaces. This is not a completely standard choice but with this definition, the metric completion of a metric space $X$ satisfying the following list of hypotheses is automatically CAT(0).
\begin{itemize}
\item For any two points $x,y\in X$, there is $m\in X$ with $d(x,m)=d(y,m)=\frac{1}{2}d(x,y)$,
\item for any triple $x,y,z\in X$ and $m\in X$ such that $d(x,m)=d(y,m)=\frac{1}{2}d(x,y)$,  Inequality \eqref{CAT(0)} holds.
\end{itemize}
\end{remark}

For a Euclidean space, the parallelogram law yields equality in Inequality \eqref{CAT(0)}. So, Inequality \eqref{CAT(0)}, called the \emph{CAT(0) inequality}, means that $X$ is as least as non-positively curved as a Euclidean plane. There is another equivalent way to express the same idea of non-positive curvature\index{curvature!non-positive}\index{non-positive curvature}: geodesic triangles are thinner than Euclidean ones (see \cite[Proposition II.1.7]{MR1744486}).

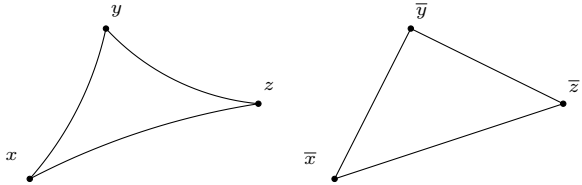
\begin{figure}
\begin{center}
\begin{tikzpicture}[line cap=round,line join=round,>=triangle 45,x=1.0cm,y=1.0cm]
\clip(1.34,0.4) rectangle (9.72,3.58);
\draw (7.,3.)-- (6.,1.);
\draw (6.,1.)-- (9.,2.);
\draw (7.,3.)-- (9.,2.);
\draw [shift={(-1.364,3.932)}] plot[domain=5.566294760392265:6.072780635965295,variable=\t]({1.*4.462411903892333*cos(\t r)+0.*4.462411903892333*sin(\t r)},{0.*4.462411903892333*cos(\t r)+1.*4.462411903892333*sin(\t r)});
\draw [shift={(6.484,-7.452)}] plot[domain=1.7265288277733808:2.058564934609697,variable=\t]({1.*9.56778762306104*cos(\t r)+0.*9.56778762306104*sin(\t r)},{0.*9.56778762306104*cos(\t r)+1.*9.56778762306104*sin(\t r)});
\draw [shift={(5.292,5.084)}] plot[domain=3.8794945720440968:4.61798817072367,variable=\t]({1.*3.0977927625972654*cos(\t r)+0.*3.0977927625972654*sin(\t r)},{0.*3.0977927625972654*cos(\t r)+1.*3.0977927625972654*sin(\t r)});
\begin{scriptsize}
\draw [fill=black] (2.,1.) circle (1.0pt);
\draw[color=black] (1.76,1.3) node {$x$};
\draw [fill=black] (3.,3.) circle (1.0pt);
\draw[color=black] (3.14,3.24) node {$y$};
\draw [fill=black] (5.,2.) circle (1.0pt);
\draw[color=black] (5.14,2.24) node {$z$};
\draw [fill=black] (6.,1.) circle (1.0pt);
\draw[color=black] (5.68,1.28) node {$\overline x$};
\draw [fill=black] (7.,3.) circle (1.0pt);
\draw[color=black] (7.14,3.24) node {$\overline y$};
\draw [fill=black] (9.,2.) circle (1.0pt);
\draw[color=black] (9.14,2.24) node {$\overline z$};
\end{scriptsize}
\end{tikzpicture}
\caption{A geodesic triangle in a CAT(0) space is thinner than a triangle in the Euclidean plane with the same side lengths.}
\end{center}
\end{figure}

It is a simple consequence of the CAT(0) inequality that two points $x,y$ are linked by a unique geodesic segment that we denote by $[x,y]$. Moreover it is easy to see that a CAT(0) space $X$ is contractible. Fix a point $x_0\in X$ and define $h(x,t)$ to be the point $tx_0+(1-t)x$ in an isometric parametrisation of the segment $[x,x_0]$ where $t\in[0,1]$. The CAT(0) inequality (actually convexity is sufficient) shows that $h$ is continuous and is a retraction from $X$ to $\{x_0\}$. 

A metric space is \emph{locally CAT(0)}\index{space!locally CAT(0)}\index{locally CAT(0) space} if any point is the center of a ball of positive radius which is CAT(0). Simple connectedness and the local CAT(0) condition is sufficient to recover the CAT(0) condition globally.

\begin{theorem}[Cartan-Hadamard theorem]\index{theorem!Cartan-Hadamard}\index{Cartan-Hadamard theorem} A complete metric space is CAT(0) if and only if it is locally CAT(0) and simply connected.
\end{theorem}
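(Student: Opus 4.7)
The direction ``CAT(0) $\Rightarrow$ locally CAT(0) and simply connected'' is immediate. The retraction $h$ constructed in the excerpt shows that $X$ is contractible and in particular simply connected; and each open ball in a CAT(0) space is convex (the distance to the center is convex along geodesics), so it inherits the CAT(0) inequality from the ambient space and $X$ is locally CAT(0). The substance of the theorem is thus the converse: a complete, simply connected, locally CAT(0) space is CAT(0).

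My plan for the converse has three stages. \emph{Stage 1 (local picture):} by hypothesis, each $p\in X$ sits inside a CAT(0) ball $B(p,r(p))$ with $r(p)>0$; on such a ball, geodesics exist, are unique, and depend continuously on their endpoints. \emph{Stage 2 (homotopy straightening):} I would show that for every continuous path $\gamma\colon[0,1]\to X$ from $x$ to $y$ there is a unique local geodesic $\sigma$ in the endpoint-fixing homotopy class $[\gamma]$. The construction is a Birkhoff-style curve shortening: subdivide $\gamma=\gamma_1\ast\cdots\ast\gamma_n$ so that each $\gamma_i$ lies inside one of the balls from Stage 1, replace each $\gamma_i$ by its unique Stage 1 geodesic, and iterate. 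Monotone decrease of length together with Arzel\`a--Ascoli (using completeness) yields convergence to a local geodesic, and one checks that the outcome depends only on $[\gamma]$, not on the subdivision. Because $X$ is simply connected, there is exactly one relative homotopy class of paths between any two points, so Stage 2 delivers a canonical local geodesic $\sigma_{xy}$ between any two points of $X$.

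\emph{Stage 3 (globalising CAT(0)):} I would then verify that $\sigma_{xy}$ realises the distance (so that $X$ is geodesic in the strong sense) and that every triangle $\Delta(x,y,z)$ built from these segments satisfies Inequality \eqref{CAT(0)}. The tool is subdivide-and-glue: triangulate $\Delta$ into sub-triangles so small that each lies in a CAT(0) neighborhood, apply the inequality locally, and patch together. A clean way to package the patching is Reshetnyak's gluing theorem, which asserts that CAT(0) pieces glued along a common convex geodesic remain CAT(0); iterating this across the triangulation of $\Delta$ yields the global inequality.

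The main obstacle is the passage from local to global, concentrated in Stages 2 and 3. A local geodesic is not \emph{a priori} distance-minimising, and one must rule out the existence of a shorter path through a different region of $X$. Simple connectedness is precisely the hypothesis that eliminates such shortcuts: it forbids any non-trivial monodromy of the homotopy-straightening construction, ensuring that the local geodesic produced in Stage 2 is genuinely canonical. Completeness is what keeps the curve-shortening limit in Stage 2 alive and allows the triangulation in Stage 3 to be refined uniformly. The quantitative angle-sum estimate underlying the gluing step is itself delicate, since the thinness defects of many small CAT(0) triangles must be controlled when concatenated.
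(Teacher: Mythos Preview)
The paper is a survey and does not include a proof of the Cartan--Hadamard theorem; it states the result and implicitly refers the reader to the standard reference \cite{MR1744486}, where a full proof occupies a dedicated chapter. So there is no ``paper's own proof'' to compare your proposal against.

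On its own merits, your outline captures the standard architecture: the forward direction is indeed immediate, and the converse does proceed by first producing local geodesics in homotopy classes (your Stage~2) and then globalising the comparison inequality (your Stage~3). Two comments on the details. First, invoking Reshetnyak's gluing theorem in Stage~3 is not quite the right tool: Reshetnyak glues \emph{spaces} along isometric convex subsets, whereas what you need here is Alexandrov's patchwork lemma, which compares a triangle subdivided by a cevian to its Euclidean comparison triangle and lets you propagate thinness from small sub-triangles to the large one. Second, the logical order of Stages~2 and~3 is more entangled than your write-up suggests: to conclude that the local geodesic $\sigma_{xy}$ is distance-realising you already need some form of the comparison inequality along it (this is how one rules out conjugate points and shortcuts). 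The standard treatment handles this by first proving that local geodesics vary continuously with their endpoints and satisfy a comparison inequality via the patchwork, and \emph{then} deduces that they are globally minimising. Your acknowledgment of this obstacle is accurate, but the resolution you offer---monodromy considerations from simple connectedness---addresses uniqueness of the homotopy class, not the metric minimisation itself.
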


\begin{examples}The following are examples of CAT(0) spaces.

\begin{enumerate}
\item Euclidean and more generally Hilbert spaces.
\item Simplicial trees with the length metric and more generally real trees. 
\item Complete simply connected Riemannian manifolds of non-positive sectional curvature. Such manifolds are called \emph{Hadamard manifolds}\index{Hadamard manifold}.
\end{enumerate}
\end{examples}

Let us comment on these examples. It is clear that Hilbert spaces are CAT(0) spaces because of the parallelogram law. For Banach spaces, the CAT(0) condition actually implies  the parallelogram law. From that point, one can recover that the norm comes from a scalar product and thus the only Banach spaces that are CAT(0) are actually Hilbert spaces.

For Riemannian manifolds, non-positivity of the sectional curvature is equivalent to the CAT(0) inequality for small balls. This follows from the seminal works of Busemann and Alexandrov \cite[II.1.17]{MR1744486}. The Cartan-Hadamard theorem allows to go from local to global.

Let us see a first simple consequence of the CAT(0) condition for a group acting by isometries on a CAT(0) space.
\begin{theorem}[Cartan fixed point theorem]\index{Cartan fixed point theorem}\index{theorem!Cartan fixed point} Let $G$ be a group acting by isometries on a CAT(0) space. If $G$ has a bounded orbit then $G$ has a fixed point.
\end{theorem}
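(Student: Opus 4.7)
The plan is to produce a canonical fixed point as the \emph{circumcenter} of a bounded orbit. Given a bounded subset $B \subset X$, define the circumradius
$$r(B) := \inf_{x \in X} \sup_{b \in B} d(x,b).$$
I would show that there exists a unique point $c(B) \in X$, the circumcenter of $B$, at which this infimum is attained. Assuming this, take $B$ to be any bounded $G$-orbit. Since $G$ preserves $B$, any $g \in G$ maps circumcenters to circumcenters, so $g \cdot c(B) = c(B)$ by uniqueness, which is exactly the desired fixed point.

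For uniqueness, suppose $x_1$ and $x_2$ both realize $r := r(B)$, and let $m$ be their midpoint. Applying the CAT(0) inequality \eqref{CAT(0)} to each $b \in B$ gives
$$d(m,b)^2 \leq \tfrac{1}{2}\bigl(d(x_1,b)^2 + d(x_2,b)^2\bigr) - \tfrac{1}{4}d(x_1,x_2)^2 \leq r^2 - \tfrac{1}{4}d(x_1,x_2)^2.$$
Taking the supremum over $b \in B$ and using $\sup_b d(m,b)^2 \geq r^2$ (by definition of $r$) forces $d(x_1,x_2) = 0$.

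For existence, I would use completeness on a minimizing sequence. Pick $x_n \in X$ with $r_n := \sup_{b \in B} d(x_n,b) \to r$, and let $m_{nm}$ denote the midpoint of $[x_n,x_m]$. The same midpoint argument gives, for every $b \in B$,
$$d(m_{nm},b)^2 \leq \tfrac{1}{2}(r_n^2 + r_m^2) - \tfrac{1}{4}d(x_n,x_m)^2,$$
and supping over $b$ against the lower bound $r^2$ yields
$$\tfrac{1}{4}d(x_n,x_m)^2 \leq \tfrac{1}{2}(r_n^2 + r_m^2) - r^2 \longrightarrow 0.$$
Thus $(x_n)$ is Cauchy, and by completeness it converges to some $c \in X$; continuity of $x \mapsto \sup_{b \in B} d(x,b)$ (which follows from the triangle inequality) shows $c$ realizes $r$.

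The main obstacle is really just the existence step: one needs the CAT(0) inequality in precisely the form \eqref{CAT(0)} to turn a near-minimizing pair into a genuinely better candidate via its midpoint, and this is exactly what makes the minimizing sequence Cauchy. Once uniqueness and existence of $c(B)$ are in hand, the $G$-equivariance of the construction (the circumcenter depends only on the set $B$ as a metric configuration) makes the fixed-point conclusion immediate.
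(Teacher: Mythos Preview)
Your proposal is correct and follows exactly the approach the paper indicates: the paper merely states that any bounded subset of a CAT(0) space has a unique circumcenter and that the circumcenter of a bounded orbit is fixed, without supplying the details. You have filled in precisely those details (existence via a Cauchy minimizing sequence using completeness, uniqueness via the midpoint inequality \eqref{CAT(0)}), so your argument is a faithful expansion of the paper's sketch.
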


The proof of the Cartan fixed point theorem relies on the fact that any bounded subset $B$ of a CAT(0) space has a \emph{circumcenter}\index{circumcenter}. This is the center of the unique closed ball of minimal radius containing $B$. In particular, a group with a bounded orbit fixes the circumcenter of this orbit.

This gives a proof of the fact that any compact subgroup of $\GL_n(\R)$ lies in the orthogonal group of some scalar product of $\R^n$. The set of scalar products on $\R^n$ may be endowed with a CAT(0) metric such that $\GL_n(\R)$ acts by isometries on it.
\begin{definition}Let $X$ be a CAT(0) space. A subspace of $X$ is a \emph{flat}\index{flat!subspace}\index{subspace!flat} if it is isometric to some Euclidean space. The \emph{rank}\index{rank} of $X$ is the supremum of dimensions of flats in $X$.
\end{definition}
Flat subsets, that is, subsets isometric to some Euclidean space, in CAT(0) spaces are limit cases of non-positive curved spaces and thus play a particular role. The rank of a CAT(0) space is an important invariant. For examples, it appears in two conjectures discussed in Section \ref{rrc}.

Isometries of CAT(0) spaces may have very different dynamical behaviours.
\begin{definition}Let $X$ be a CAT(0) space and $g\in\Isom(X)$. The \emph{translation length}\index{translation length}\index{length!translation} of $g$ is $\inf_{x\in X} d(gx,x)$. If this infimum is achieved, $g$ is said to be \emph{semi-simple}\index{semi-simple isometry}\index{isometry!semi-simple} and \emph{parabolic}\index{parabolic isometry}\index{isometry!parabolic} otherwise.
\end{definition}

In a CAT(0) space, the semi-simple isometries are the ones we understand. The parabolic ones are more or less mysterious. If $g$ is semi-simple with vanishing translation length then $g$ has fixed points. If $g$ is semi-simple with positive translation length then there is a geodesic line which is $g$-invariant and $g$ acts as a translation on this geodesic line. 

\subsection{Convexity and weaker notions of non-positive curvature}
The CAT(0) inequality implies strong convexity consequences that we briefly describe.\index{convexity}

\begin{definition} Let $(X,d)$ be a geodesic metric space. A function $f\colon X\to \R$ is \emph{convex}\index{convex!function}\index{function!convex} if for any geodesic segment $[x,y]\subset X$, the restriction of $f$ to $[x,y]$ is convex (as a function on a real interval).
\end{definition}

In CAT(0) spaces, the distance to a point is a convex function. More generally, for two parametrisations with constant speeds $\gamma,\gamma'\colon[0,1]\to X$ of geodesic segments, the distance function $t\mapsto d(\gamma(t),\gamma'(t))$ is convex. 

A subspace of a CAT(0) space is \emph{convex}\index{convex!subspace}\index{subspace!convex} if it contains the geodesic segment between any two of its points. For a closed convex subset $C$, any point $x\in X$ has a unique point $p_C(x)\in C$ minimising the distance to $x$. The map $p_C\colon X\to C$ is 1-Lipschitz and is called the \emph{projection}\index{projection!on a subspace} onto $C$. Let $d_C$ be the \emph{distance function}\index{distance function}\index{function!distance} to $C$: for any $x\in C$, $d_C(x)=d(x,p_C(x))$. This distance function $d_C$ is also convex.

We have seen that the CAT(0) condition gives a notion of non-positive curvature for metric spaces. Comparing to Banach spaces, one may see the CAT(0) inequality as a (very strong) uniform convexity notion. Concentrating on the convexity of the distance, let us introduce two weaker notions of non-positive curvature for metric spaces.

\begin{definition} Let $(X,d)$ be a geodesic metric space. The space $X$ is \emph{Busemann non-positively curved}\index{Busemann non-positively curved space}\index{space!Busemann non-positively curved} if for any geodesic triangle $[x,y],[y,y']$ and $[y',x]$, the midpoints $m$ of $[x,y]$ and $m'$ of $[x,y']$, one has

$$d(m,m')\leq\frac{1}{2}d(y,y').$$
\end{definition}

This definition has different equivalent formulations (see \cite[Proposition 8.1.2]{MR2132506}) and it implies uniqueness of geodesic segments between two points. For example, a normed space is Busemann non-positively curved if and only if it is strictly convex.

A normed space which is not strictly convex may have many geodesics between two points (think to $\R^2$ with the sup norm) but some geodesics are distinguished: the affine geodesics. Actually, if one forgets the linear structure and remembers only the metrics one can recover the linear structure. The famous Mazur-Ulam theorem tells us that the isometries are affine because isometries remembers affine midpoints. In particular, affine geodesics are very special because they are preserved by isometries. With this idea that some specific geodesics may be more important than others, one has the following even weaker notion of non-positive curvature.

\begin{definition}Let $(X,d)$ be a metric space. A \emph{geodesic bicombing}\index{geodesic bicombing} is a choice of a geodesic segment $\sigma_{x,y}$ between $x$ and $y$ in $X$ for any two points $x,y\in X$. That is $\sigma_{x,y}\colon [0,1]\to X$ and
\begin{itemize}
\item $\sigma_{x,y}(0)=x$,
\item $\sigma_{x,y}(1)=y$,
\item for any $t,t'\in[0,1]$, $d(\sigma_{x,y}(t),\sigma_{x,y}(t'))=|t-t'|d(x,y)$.
\end{itemize}
A geodesic bicombing is said to be \emph{convex}\index{convex!geodesic bicombing} if for any quadruple $x,y,x',y'$,

$$t\mapsto d(\sigma_{x,y}(t),\sigma_{x',y'}(t))$$
is convex.
\end{definition}

For example, in any normed space, the geodesic bicombing given by affine segments is convex.

\begin{example}
The space $\GL_n(\R)/\Or_n(\R)$ is a symmetric space of non-compact type (see \S \ref{syms}) and thus a CAT(0) space. Leaving finite dimension, the natural analog in infinite dimension is $\GL(\mathcal{H})/\Or(\mathcal{H})$ where $\GL(\mathcal{H})$ is the group of all bounded invertible operators of the Hilbert space $\mathcal{H}$ and $\Or(\mathcal{H})$ is the orthogonal group. This space is no more CAT(0) but there is a convex geodesic bicombing. This bicombing comes from the fact that $\GL(\mathcal{H})/\Or(\mathcal{H})$ is a manifold of infinite dimension that can be identified with the space of symmetric definite positive operators. The tangent space at $\Id$ is the space of symmetric bounded operators. This vector space with the operator norm is a Banach space and the exponential map sends linear geodesics to geodesics belonging to the geodesic bicombing. The action of $\GL(\mathcal{H})$ is isometric and moreover preserves the geodesic bicombing.
\end{example}

%
%
%
%

\subsection{The proper case}

The CAT(0) inequality (and more generally convex bicombings) yields a notion of non-positive curvature beyond manifolds. In particular, CAT(0) spaces may be not locally-compact. However, locally compact CAT(0) spaces have more enjoyable properties. 

\begin{definition} A metric space is said to be \emph{proper}\index{proper!space}\index{space!proper} if closed balls are compact.\end{definition}
For CAT(0) spaces, properness is equivalent to local compactness. For proper CAT(0) spaces, there is a natural and pleasant topology on the isometry group.

\begin{proposition} Let $(X,d)$ be a proper CAT(0) space. Its isometry group endowed with the compact-open topology is a locally compact second countable group.
\end{proposition}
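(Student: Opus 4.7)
The plan is to prove everything by an Arzel\`a--Ascoli argument, exploiting two features simultaneously: properness of $X$ (so bounded sets are precompact) and the fact that isometries, being $1$-Lipschitz, form an equicontinuous family. Throughout, I write $G=\Isom(X)$ and fix a basepoint $x_0\in X$. The basic objects to study are the sets
$$K_r=\{g\in G:d(gx_0,x_0)\leq r\}\qquad(r\geq 0).$$
Observe that $K_r$ is closed in the compact-open topology and that the open set $\{g\in G:d(gx_0,x_0)<r\}=V(\{x_0\},B(x_0,r))$ is a basic neighbourhood of $\Id$ contained in $K_r$, so if we can show each $K_r$ is compact, local compactness of $G$ follows at once.

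To see that $K_r$ is compact I would apply Arzel\`a--Ascoli to $K_r$ regarded as a family of maps $X\to X$. Equicontinuity is immediate since each $g$ is $1$-Lipschitz. For pointwise precompactness, note that for any $x\in X$ and any $g\in K_r$,
$$d(gx,x_0)\leq d(gx,gx_0)+d(gx_0,x_0)\leq d(x,x_0)+r,$$
so the orbit $\{gx:g\in K_r\}$ is bounded, hence relatively compact by properness of $X$. Arzel\`a--Ascoli then gives that every sequence in $K_r$ has a subsequence converging uniformly on compacta to some continuous $g_\infty\colon X\to X$. The map $g_\infty$ is automatically distance-preserving. The main obstacle is showing that $g_\infty$ is surjective, i.e.\ still in $G$; here I would use the symmetric observation that the inverses $g_n^{-1}$ also lie in $K_r$ (since $d(g_n^{-1}x_0,x_0)=d(x_0,g_nx_0)$), so after a diagonal extraction $g_n^{-1}\to h_\infty$ as well, and passing to the limit in $g_ng_n^{-1}=\Id=g_n^{-1}g_n$ shows $g_\infty h_\infty=h_\infty g_\infty=\Id$. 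Thus $g_\infty\in G$ and $K_r$ is compact.

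For second countability, note that a proper metric space is separable: writing $X=\bigcup_n \bar B(x_0,n)$ exhibits $X$ as a countable union of compact, hence separable, sets. Choose a countable dense subset $D=\{x_n\}_{n\in\N}\subset X$ and consider the evaluation map
$$\Phi\colon G\longrightarrow X^{D},\qquad \Phi(g)=(gx_n)_{n\in\N}.$$
The target is second countable as a countable product of separable metrizable spaces. The map $\Phi$ is injective by continuity of isometries and density of $D$, and it is continuous in the compact-open topology. Conversely, since $G$ is an equicontinuous family, pointwise convergence on the dense set $D$ implies uniform convergence on every compact subset of $X$, which is exactly convergence in the compact-open topology; thus $\Phi$ is a topological embedding, and $G$ inherits second countability from $X^{D}$.

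Finally, the verification that composition and inversion are continuous in the compact-open topology is a standard fact for groups of homeomorphisms of a locally compact Hausdorff space whose elements form an equicontinuous family; this applies here without difficulty, completing the proof. The only genuinely delicate step, as noted above, is the closure of $G$ under limits, which is handled by treating $g_n$ and $g_n^{-1}$ symmetrically.
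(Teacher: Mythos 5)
Your argument is correct, and it is the standard one; the paper itself states this proposition without proof (it is a classical fact, valid for any proper metric space --- note that you never use the CAT(0) hypothesis, which is fine). Two small points worth making explicit. First, you argue throughout with sequences and diagonal extractions; this is legitimate because a proper metric space is $\sigma$-compact and hemicompact, so the compact-open topology on $C(X,X)$ is metrizable (it is the topology of uniform convergence on compacta), and your separability argument for $X$ should really be placed \emph{before} the Arzel\`a--Ascoli step since the diagonal extraction quietly uses a countable exhaustion of $X$ by compacta. Second, you correctly identify the one genuinely delicate step --- that a locally uniform limit of isometries, while automatically distance-preserving, need not a priori be surjective --- and your symmetric treatment of $g_n$ and $g_n^{-1}$ (both lie in $K_r$ since $d(g_n^{-1}x_0,x_0)=d(x_0,g_nx_0)$, and $d(g_nh_nx,g_\infty h_\infty x)\le d(h_nx,h_\infty x)+d(g_nh_\infty x,g_\infty h_\infty x)$ passes the identity $g_ng_n^{-1}=\Id$ to the limit) closes it properly. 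The verification that $(g,h)\mapsto gh^{-1}$ is continuous does deserve a sentence rather than an appeal to generalities, but for an equicontinuous family on a proper space it reduces to the same estimate you already wrote, so there is no gap.
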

In particular, such groups have Haar measures and may have lattices (that are discrete subgroups of finite covolume).

Let $G$ be a topological group acting by isometries on a metric space $(X,d)$. The action is \emph{continuous}\index{action!continuous}\index{continuous action} if the map $(\gamma,x)\mapsto \gamma x$ from $\Gamma\times X$ to $X$ is continuous. It is \emph{proper}\index{proper!action}\index{action!proper} if for any compact subsets $K_1,K_2\subset X$, $\{g\in G|\ gK_1\cap K_2\neq\emptyset\}$ is compact. In case $X$ is proper and $G$ is discrete, the action is proper if and only if it is properly discontinuous, that is for any $x,y\in X$, there are neighbourhoods $U_x,U_y$ of $x$ and $y$ such that $\{g\in,\ gU_x\cap U_y\neq\emptyset\}$ is finite. The action is \emph{cocompact}\index{action!cocompact}\index{cocompact action} if the topological quotient $G\backslash X$ is compact.

The interplay between the algebraic properties of a group acting on a CAT(0) space and the geometry of the space is expected to be richer when the action is proper and cocompact. This idea leads to the following definition.

\begin{definition} Let $G$ be a group acting by isometries on a CAT(0) space $X$. The action is \emph{geometric}\index{geometric action}\index{action!geometric} (or the group acts \emph{geometrically}) if it is proper and cocompact. A countable group $\Gamma$ is \emph{CAT(0)}\index{CAT(0)!group}\index{group!CAT(0)} if acts geometrically on a proper CAT(0) space. \end{definition}

\subsection{Boundary at infinity}
There is a nice geometric object associated to any CAT(0) space $X$: its \emph{boundary at infinity}\index{boundary!at infinity}. A \emph{geodesic ray} \index{geodesic!ray}\index{ray!geodesic} is the image of some isometric map $[0,+\infty)\to X$.

\begin{definition} Let $X$ be CAT(0) space. The boundary at infinity $\partial X$ of $X$ is the set of all geodesic rays (with any base point) where two of them are identified if and only if there are at bounded Hausdorff distance  from one another.
\end{definition}

If one fixes a base point $x_0\in X$, there is exactly one geodesic ray starting from $x_0$ in each class and thus $\partial X$ can be identified with the set of geodesic rays starting from $x_0$. This boundary is sometimes called the \emph{visual boundary}\index{boundary!visual} because it corresponds to the vision of infinity from $x_0$. There is a natural way to topologize $\overline{X}=X\cup\bo X$. Fix a point $x_0\in X$. One can identify $\overline{X}$ with the inverse limit of the system of balls of radius $r>0$ around $x_0$. In this way $X$ embeds as an open set in $\overline X$. A sequence of points $x_n\in\overline X$ converges to some point $\xi\in\bo X$ if for all $r>0$ the point $x_n(r)$ at distance  $r$ from $x_0$ on $[x_0,x_n]$ converges to the point $x(r)\in[x_0,\xi)$ at distance $r$ from $x_0$.
\begin{center}
\begin{tikzpicture}[line cap=round,line join=round,>=triangle 45,x=1.0cm,y=1.0cm]
\clip(-1.04,-0.58) rectangle (6.18,2.8);
\draw (0.,0.)-- (2.,2.);
\draw (0.,0.)-- (5.34,2.3);
\draw [shift={(0.,0.)}] plot[domain=0.20431586460012127:1.0026915943204695,variable=\t]({1.*2.2671568097509267*cos(\t r)+0.*2.2671568097509267*sin(\t r)},{0.*2.2671568097509267*cos(\t r)+1.*2.2671568097509267*sin(\t r)});
\begin{scriptsize}
\draw [fill=black] (0.,0.) circle (1.0pt);
\draw[color=black] (-0.28,0.42) node {$x_0$};
\draw [fill=black] (2.,2.) circle (1.0pt);
\draw[color=black] (2.18,2.3) node {$x_n$};
\draw[color=black] (5.6,2.54) node {$\xi$};
\draw [fill=black] (1.6031219541881399,1.6031219541881399) circle (1.0pt);
\draw[color=black] (1.,1.72) node {$x_n(r)$};
\draw [fill=black] (2.0822290073879253,0.8968402091745745) circle (1.0pt);
\draw[color=black] (2.66,0.84) node {$x(r)$};
\end{scriptsize}
\end{tikzpicture}
\end{center}

For any simply connected complete Riemannian manifold $X$ of non-positive curvature and dimension $n$, the boundary $\bo X$ is homeomorphic to a sphere of dimension $n-1$. In particular, for Euclidean spaces the boundary at infinity is a Euclidean sphere. This statement can be made metrical endowing this boundary with the angular metric 
\cite[II.9.4]{MR1744486}.

The construction of this boundary is functorial: if a group acts by isometries on a CAT(0) space it also acts by homeomorphisms on its boundary.

This boundary is particularly convenient for proper CAT(0) spaces.
\begin{proposition} If $X$ is a proper CAT(0) space then $\overline X$ is compact.
\end{proposition}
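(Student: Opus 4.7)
The plan is to exploit the inverse-limit description of $\overline{X}$ already sketched in the excerpt, and invoke Tychonoff. Fix $x_0\in X$. For each $r>0$, properness gives that the closed ball $\overline{B}(x_0,r)$ is compact. For $0<r<s$, define the radial projection $\pi_{s,r}\colon \overline{B}(x_0,s)\to \overline{B}(x_0,r)$ by $\pi_{s,r}(y)=y$ if $d(x_0,y)\le r$, and otherwise $\pi_{s,r}(y)$ is the point of the geodesic segment $[x_0,y]$ at distance $r$ from $x_0$. One checks $\pi_{t,r}=\pi_{s,r}\circ\pi_{t,s}$ for $r<s<t$, so we obtain an inverse system of compact spaces.

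The first substantive step is to verify that each $\pi_{s,r}$ is continuous. This is where the CAT(0) hypothesis enters: the convexity of the distance function $t\mapsto d(\gamma(t),\gamma'(t))$ between two constant-speed geodesics (stated in the subsection on convexity) implies that the map sending a point $y$ to the point at a fixed distance $r$ along $[x_0,y]$ is $1$-Lipschitz, hence continuous. With that, the inverse limit
$$L=\varprojlim_{r>0}\overline{B}(x_0,r)$$
is a closed subspace of the product $\prod_{r>0}\overline{B}(x_0,r)$, and therefore compact by Tychonoff.

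It remains to identify $L$ with $\overline{X}$ as a topological space. Define $\Phi\colon \overline{X}\to L$ by sending $x\in X$ to the coherent family $(\pi_r(x))_{r>0}$ where $\pi_r(x)$ is $x$ if $d(x_0,x)\le r$ and the point of $[x_0,x]$ at distance $r$ from $x_0$ otherwise, and sending $\xi\in\partial X$ to the family $(c_\xi(r))_{r>0}$, where $c_\xi$ is the unique geodesic ray from $x_0$ representing $\xi$. Injectivity is clear. For surjectivity, given a coherent family $(x_r)$, either $d(x_0,x_{r_0})<r_0$ for some $r_0$, in which case coherence forces $x_r=x_{r_0}$ for all $r\ge r_0$ and we hit the point $x_{r_0}\in X$; or $d(x_0,x_r)=r$ for every $r>0$, in which case coherence means the $x_r$ parameterize a geodesic ray from $x_0$, giving a boundary point. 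Finally, the definition of convergence recalled in the excerpt — $x_n\to\xi$ iff the points $x_n(r)\in[x_0,x_n]$ at distance $r$ converge to $c_\xi(r)$ for every $r$ — is exactly convergence in the product topology on $L$, so $\Phi$ is a homeomorphism.

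The main technical point is not the Tychonoff step but checking that $\Phi$ is actually a homeomorphism onto $L$, and in particular that the inverse-limit topology agrees with the topology on $\overline{X}$ defined by radial convergence. Once the continuity of the radial projections $\pi_{s,r}$ is in hand (which itself is the only place the CAT(0) inequality is really used, through convexity of distances between geodesics), the identification and hence compactness of $\overline{X}$ follow immediately.
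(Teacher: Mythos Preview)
The paper does not actually supply a proof of this proposition; it is stated as a fact, immediately after the sentence ``One can identify $\overline{X}$ with the inverse limit of the system of balls of radius $r>0$ around $x_0$.'' Your argument is precisely the one that this description invites, and it is correct: properness makes each $\overline{B}(x_0,r)$ compact, the bonding maps are continuous, and Tychonoff finishes the job.

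Two small remarks. First, your justification that the radial projection is $1$-Lipschitz via convexity of $t\mapsto d(\gamma(t),\gamma'(t))$ is fine, but a slightly cleaner route already present in the paper is to note that the radial projection onto $\overline{B}(x_0,r)$ coincides with the nearest-point projection $p_C$ onto this closed convex set, which the paper records is $1$-Lipschitz. Second, since the paper \emph{defines} the topology on $\overline{X}$ by this inverse-limit identification, your map $\Phi$ is essentially a tautology once surjectivity is checked; the sequential description of convergence in the paper is a consequence, not the definition, so there is no need to worry about whether sequences detect the topology (and in any case one may restrict the inverse system to $r\in\mathbb{N}$, making everything metrizable).
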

For example, this proposition shows that any parabolic isometry $g$ of a proper CAT(0) space has a fixed point at infinity. Choose a minimising sequence for the function $x\mapsto d(gx,x)$. This sequence has a subsequence that converges to a point at infinity which is fixed. 
\subsection{Some simple geometric questions}
Despite the fact that CAT(0) spaces became more and more popular in the last decades, some elementary (in their statement) questions are still completely open. Let us state and comment two of them. These questions are particular cases of the following theme: \textit{what results of geometric functional analysis hold for metric spaces of non-positive curvature?} By geometric functional analysis we mean the study of metric and probabilistic questions or questions about convexity for Banach spaces or more generally Fr\'echet spaces. Fr\'echet spaces are locally convex vector spaces metrizable by a complete and translation invariant metric. 

Let $X$ be a CAT(0) space and $Y$ a subset of $X$. The \emph{closed convex hull}\index{convex!hull} of $Y$ is the smallest closed and convex subset of $X$ that contains $Y$. This is the intersection of all closed and convex subspaces of $X$ containing $Y$. We denote it by $\overline{\Conv}(Y)$.
\begin{question}\label{convexhull}Let $X$ be a CAT(0) space. Is the closed convex hull of a compact subset still compact?
\end{question}

For convenience, let us say that the space $X$ has \emph{Property (CH)}\index{property!(CH)} if the closed convex hull of any compact subset of $X$ is compact. This question appeared in \cite[Section 4]{MR1385525} (where the name of the property is coined) and is discussed in \cite{6627}. Observe it is not difficult to find compacta with non-closed convex hulls. 

If the CAT(0) space is \emph{proper} then the answer is obviously positive because any compact subset is contained in some closed ball and thus the closed convex hull, which is included in this ball, is compact. The following lemma shows that it is sufficient to answer Question \ref{convexhull} for finite subsets.

\begin{lemma}\label{finite} Let $X$ be a CAT(0) space such that the closed convex hull of any finite number of points is compact. Then $X$ has Property (CH).
\end{lemma}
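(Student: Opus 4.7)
The plan is to show $\overline{\Conv}(K)$ is totally bounded for every compact $K\subset X$; since CAT(0) spaces are complete and $\overline{\Conv}(K)$ is closed, total boundedness will give compactness. So, fix $\epsilon>0$; the aim is to produce a finite $\epsilon$-net (or $2\epsilon$-net) for $\overline{\Conv}(K)$.

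First I would exploit the compactness of $K$ to pick a finite $\epsilon$-net $F=\{x_1,\dots,x_n\}\subset K$. By hypothesis, $C:=\overline{\Conv}(F)$ is compact, hence admits a finite $\epsilon$-net $G$. It therefore suffices to show that $\overline{\Conv}(K)$ is contained in the closed $\epsilon$-neighbourhood
\[
N_\epsilon(C)=\{x\in X\,:\,d_C(x)\leq\epsilon\},
\]
for then $G$ will be a $2\epsilon$-net of $\overline{\Conv}(K)$, and varying $\epsilon$ gives total boundedness.

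The decisive point is that $N_\epsilon(C)$ is itself closed and convex. This is exactly the content of the convexity of the distance function $d_C$ to a closed convex subset of a CAT(0) space, recalled earlier in the excerpt: sublevel sets of a convex function on a geodesic space are convex, and $d_C$ is continuous, so $N_\epsilon(C)$ is closed. By construction $F\subset C\subset N_\epsilon(C)$, and every point of $K$ lies within $\epsilon$ of $F$ hence within $\epsilon$ of $C$, so $K\subset N_\epsilon(C)$. Because $N_\epsilon(C)$ is closed and convex and contains $K$, it contains $\overline{\Conv}(K)$.

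I don't see a serious obstacle: the only nontrivial geometric input is the convexity of $d_C$ for closed convex $C$, which is already in hand, and the rest is the standard Mazur-style argument adapted to the CAT(0) setting. One small point to be careful about is to phrase things so that completeness of $X$ (built into the CAT(0) definition used here) passes to the closed subset $\overline{\Conv}(K)$, so that totally bounded plus complete actually yields compact.
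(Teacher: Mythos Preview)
Your argument is correct and follows essentially the same approach as the paper: pick a finite $\varepsilon$-net $F$ for $K$, use the hypothesis to get that $C=\overline{\Conv}(F)$ is compact, and then use convexity of $d_C$ to conclude that $\overline{\Conv}(K)$ lies in a small neighbourhood of $C$. The only cosmetic difference is that you phrase the conclusion via total boundedness, while the paper runs an equivalent subsequence-extraction argument (using $\varepsilon/3$ rather than $\varepsilon$), but the geometric content is identical.
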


\begin{proof} Let $K$ be a compact subset of $X$. We will show that for any sequence of $\overline{\Conv}(K)$ and $\varepsilon>0$, one can extract a subsequence of diameter less than $\varepsilon$. A diagonal extraction yields a Cauchy subsequence and thanks to completeness, this subsequence converges.

So, let $K$ be a compact subspace, $\varepsilon>0$ and $(k_i)$ be a sequence of $\overline{\Conv}(K)$, the closed convex hull of $K$. There is a finite number of points $x_1,\dots,x_n$ such that $K\subset \cup_{j=1}^nB(x_j,\varepsilon/3)$. Let $C$ be $\overline{\Conv}(x_1,\dots,x_n)$ which is compact. By convexity of the distance function $d_C$, $\overline{\Conv}(K)$is contained in the $\varepsilon/3$-neighborhood of $C$. In particular, for any $i\in\N$ there is an $l_i\in C$ such that $d(k_i,l_i)<\varepsilon/3$. By compactness of $C$, one can extract a subsequence of $(l_{\varphi(i)})$ of diameter less than $\varepsilon/3$. By the triangle inequality, $(k_{\varphi(i)})$ is a subsequence of $(k_i)$ of diameter less than $\varepsilon$.
\end{proof}

For two points the answer to Question \ref{convexhull} is clearly positive since the convex hull of two points is merely the segment between them. Actually the same question for three points is open and seems to be as hard as the original question.

\begin{question}[{\cite[6.B$_1$(f)]{MR1253544}}]Is the closed convex hull of any three points in a CAT(0) space compact?
\end{question}

Thanks to Lemma \ref{finite} and the fact that a finite number of points in a Hilbert space lies in a finite-dimensional subspace, one knows that Hilbert spaces have Property (CH). Actually the question holds also for Busemann non-positively curved space and is also open in this context (to our knowledge). For linear convexity, the result holds true for Banach spaces, see \cite[Theorem 2.10]{MR2338582}, and relies on an analog of Lemma \ref{finite}.

We do not have a plethora of well understood examples  of non proper CAT(0) spaces. One class of such examples is given by infinite-dimensional symmetric spaces of non-positive curvature (see \S \ref{syms}). In \cite[Section 4]{MR1385525}, it is stated without proof that infinite-dimensional symmetric spaces have property (CH). Let us give a precise statement and a proof.

\begin{proposition}\label{convexsymmetric} Simply connected Hilbertian symmetric spaces with non-positive curvature operator have Property (CH).
\end{proposition}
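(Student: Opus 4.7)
The plan is to reduce to a finite set of points via Lemma \ref{finite}, then enclose this finite set in a finite-dimensional totally geodesic submanifold, after which properness concludes the argument. Concretely, Lemma \ref{finite} lets us replace the compact set by an arbitrary finite subset $\{p_1,\dots,p_n\}\subset M$. If one can produce a finite-dimensional complete totally geodesic submanifold $N\subset M$ containing $\{p_1,\dots,p_n\}$, then, $N$ being convex in $M$, we have $\overline{\Conv}(\{p_1,\dots,p_n\})\subset N$; since $N$ is a finite-dimensional Hadamard manifold, hence a proper CAT(0) space, this closed convex hull is bounded in $N$ and therefore compact.

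The substantive content is thus the production of $N$. I would use the special structure of Hilbertian symmetric spaces with non-positive curvature operator: this is a very restrictive hypothesis, and I expect the relevant spaces to be, up to flat factors, infinite-dimensional analogs of rank-one symmetric spaces of non-compact type, namely $H^\infty_\R$, $H^\infty_\C$ and $H^\infty_\mathbb{H}$. For each of these models $N$ admits an explicit description: in the Minkowski-Hilbert realisation of $H^\infty_\R$, intersecting $H^\infty_\R$ with the $\R$-linear span of the lifts of $p_1,\dots,p_n$ produces a totally geodesic copy of $H^k_\R$ with $k\leq n$, and $\C$- and $\mathbb{H}$-linear variants work in the other two cases. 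A more intrinsic route is to pick a basepoint $o$, take the Cartan decomposition $\mathfrak{g}=\mathfrak{k}\oplus\mathfrak{p}$ at $o$, write $p_i=\exp(X_i)\cdot o$ with $X_i\in\mathfrak{p}$, and show that the Lie subalgebra of $\mathfrak{g}$ generated by $X_1,\dots,X_n$ is finite-dimensional; integrating this subalgebra then produces the desired $N$.

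The main obstacle, in either route, lies precisely in this enclosure step. The structural route requires a genuine list of Hilbertian symmetric spaces with non-positive curvature operator together with explicit totally geodesic embeddings of their finite-dimensional counterparts, while the Lie-algebraic route reduces to showing that the non-positive curvature operator hypothesis forces the subalgebra generated by finitely many Cartan vectors to be finite-dimensional (a non-trivial fact, since in general a Lie subalgebra generated by finitely many elements of a Banach-Lie algebra need not be finite-dimensional). This is the genuine content of the proposition; once granted, the rest of the argument is essentially formal.
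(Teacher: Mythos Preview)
Your plan hinges on enclosing an arbitrary finite set $\{p_1,\dots,p_n\}$ inside a finite-dimensional totally geodesic submanifold. The paper explicitly warns (in the Remark immediately following the proposition) that this enclosure step fails: ``a finite number of points (even three points) are not necessarily included in some finite-dimensional totally geodesic subspace.'' Your structural guess that the relevant spaces are, up to flat factors, the rank-one hyperbolic spaces $H^\infty_\R$, $H^\infty_\C$, $H^\infty_\mathbb{H}$ is the source of the error: the classification in \cite[Theorem 1.8]{BD15} covers higher-rank infinite-dimensional symmetric spaces as well, and for those the linear-span trick that works in the hyperboloid model is unavailable. Likewise, the Lie-algebraic route cannot succeed as stated, since in these higher-rank examples the subalgebra generated by finitely many Cartan vectors is typically infinite-dimensional.

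The paper's actual argument avoids exact enclosure altogether and uses \emph{approximation} instead. The classification yields that $X$ is the closure of an increasing union of finite-dimensional totally geodesic symmetric subspaces. Hence for any $\varepsilon>0$ a compact $K$ lies in the $\varepsilon$-neighbourhood of finitely many points \emph{chosen in} one such finite-dimensional subspace; their closed convex hull is compact (by properness of that subspace), and then the same $\varepsilon/3$-argument as in Lemma~\ref{finite} gives compactness of $\overline{\Conv}(K)$. In other words, one never places the given points themselves inside a finite-dimensional slice; one only needs nearby points that already live there.
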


\begin{proof}It follows from the classification of Hilbertian symmetric spaces \cite[Theorem 1.8]{BD15} that any such symmetric space is obtained as the closure of an increasing union of finite-dimensional symmetric spaces. In particular, for any $\varepsilon>0$ any compact subspace $K$ lies in the $\varepsilon$-neighbourhood of a finite number of points lying in a finite-dimensional subspace. As in Lemma \ref{finite}, $K$ lies in the $\varepsilon$-neighbourhood of a convex compact space and thus its closed convex hull is compact.
\end{proof}

\begin{remark} Proposition \ref{convexsymmetric} is not straightforward because a finite number of points (even three points) are not necessarily included in some finite-dimensional totally geodesic subspace.  
\end{remark}

\begin{definition} Let $Y$ be a convex subspace of a CAT(0) space. An \emph{extremal point}\index{extremal point}\index{point!extremal} of $Y$ is a point $y\in Y$ such that if $x,x'\in Y$ and $y\in[x,x']$ then $x=y$ or $x'=y$.
\end{definition}

Observe that in this definition, it suffices to consider points $x,x'$ such that $y$ is the midpoint of $[x,x']$. The classical Krein-Milman theorem asserts that a compact convex subset $C$ of a locally convex topological vector space has extremal points and actually this is equivalent (via the Hahn-Banach theorem) to the fact that $C$ is the closed convex hull of its extremal points. 

If one thinks of the geometry of Banach spaces as a leading source of intuition for the geometry of metric spaces of non-positive curvature, it is natural to ask if the same is true for metric spaces with a convex bicombing. In that case the answer is positive, as it was shown in \cite{Buehler:yu}. Moreover, a convex compact subset is the closed convex hull of its extremal points.

Hilbert spaces are the only Fr\'echet spaces which are CAT(0) (for any complete, invariant by translations compatible distance). In that case, closed convex bounded subspaces are compact for the weak topology and the Krein-Milman theorem for the weak topology implies that closed convex bounded subsets are the convex hull of their extremal points. So it is natural to ask the following question.

\begin{question}\label{extremal}Let $Y$ be a non-empty closed convex bounded subspace of a CAT(0) space. Is it true that $X$ has extremal points?
\end{question}

This question was asked by Nicolas Monod in the Master class "Geometric and Cohomologic Group Theory: Rigidity and Deformation" held at CIRM in 2006. Let us give a very short proof in the compact CAT(0) case.

\begin{proposition} Let $X$ be a (non-empty) compact CAT(0) space. Then $X$ has extremal points and $X$ is the closed convex hull of its extremal points.
\end{proposition}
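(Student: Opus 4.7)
The plan is to prove both assertions by diameter-type arguments built on the CAT(0) inequality \eqref{CAT(0)}. Existence of an extremal point comes directly from maximising the distance on $X\times X$; the second assertion is obtained by contradiction, by running the same style of argument inside the set of points farthest from the supposed proper closed convex hull.

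For existence, compactness lets me pick $x_{0},y_{0}\in X$ realising $D:=\operatorname{diam}(X)=d(x_{0},y_{0})$, and I would show that $x_{0}$ is extremal. If $x_{0}$ is the midpoint of some $a,b\in X$, applying \eqref{CAT(0)} to the triple $(a,b,y_{0})$ gives
\[
D^{2}=d(x_{0},y_{0})^{2}\leq\tfrac{1}{2}\bigl(d(a,y_{0})^{2}+d(b,y_{0})^{2}\bigr)-\tfrac{1}{4}d(a,b)^{2}\leq D^{2}-\tfrac{1}{4}d(a,b)^{2},
\]
since $d(a,y_{0}),d(b,y_{0})\leq D$. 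Hence $d(a,b)=0$ and $a=b=x_{0}$, so the set $E$ of extremal points is non-empty.

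For the second assertion, I would argue by contradiction: set $C:=\overline{\Conv}(E)$ and assume $C\subsetneq X$. By continuity of $d_{C}$ and compactness of $X$, the number $r:=\max_{x\in X}d_{C}(x)$ is attained and strictly positive, so $K:=d_{C}^{-1}(r)$ is a non-empty compact subset of $X\setminus C$. Using compactness of $K$, pick $x_{1},x_{2}\in K$ with $d(x_{1},x_{2})=\operatorname{diam}(K)$. I would then show $x_{1}\in E$, which contradicts $E\subseteq C$ and $K\cap C=\emptyset$. If $x_{1}$ is the midpoint of some $a,b\in X$, convexity of $d_{C}$ along $[a,b]$ combined with $d_{C}(a),d_{C}(b)\leq r=d_{C}(x_{1})$ forces $d_{C}(a)=d_{C}(b)=r$, so $a,b\in K$; hence $d(a,x_{2}),d(b,x_{2})\leq\operatorname{diam}(K)=d(x_{1},x_{2})$, and \eqref{CAT(0)} applied to $(a,b,x_{2})$ yields
\[
d(x_{1},x_{2})^{2}\leq\tfrac{1}{2}\bigl(d(a,x_{2})^{2}+d(b,x_{2})^{2}\bigr)-\tfrac{1}{4}d(a,b)^{2}\leq d(x_{1},x_{2})^{2}-\tfrac{1}{4}d(a,b)^{2},
\]
so $d(a,b)=0$ exactly as in the existence step.

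The crux, and the step I expect to be the main obstacle to spot, is the choice of the arena in which to run the diameter argument: inside $X$ itself one only produces some extremal point with no control on its location (it might already lie in $C$), whereas inside the compactum $K$ the convexity of $d_{C}$ traps any midpoint decomposition back into $K$, so the CAT(0) inequality applied to three points of pairwise distance at most $\operatorname{diam}(K)$ automatically delivers an extremal point outside $C$.
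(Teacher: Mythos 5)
Your proof is correct, and the existence part is essentially the paper's own argument: maximise a distance function over the compact space and use the CAT(0) inequality to collapse any midpoint decomposition (the paper fixes $x_0$ and maximises $d(x_0,\cdot)$, you maximise over pairs — same mechanism). For the second assertion the paper only says "an argument of the same spirit" and cites a reference, whereas you supply the full argument — running the farthest-point trick on the level set $K=d_C^{-1}(r)$ and using convexity of $d_C$ to trap midpoints in $K$ — which is exactly the intended completion and is carried out correctly.
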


\begin{proof} Let $x_0\in X$ and using compactness, choose $x\in X$ such that $d(x_0,x)=\max_{y\in X}d(x_0,y)$. Let $y,z\in X$ such that $x$ is the midpoint of $[y,z]$. By the CAT(0) inequality,
$$d(x_0,x)^2\leq 1/2(d(x_0,y)^2+d(x_0,z)^2)-1/4d(y,z)^2\leq d(x_0,x)^2-1/4d(y,z)^2$$
and thus $y=z=x$.
Once one knows there are extremal points, an argument of the same spirit shows that $X$ is the convex hull of its extremal points \cite{Buehler:yu}.
\end{proof}

Of course, it is also natural to ask if the full result of the Krein-Milman theorem holds for CAT(0) spaces.

\begin{question}\label{KMi}Is it true that a closed convex bounded subset of a CAT(0) space is the closure of the convex hull of its extremal points?\end{question}
Questions \ref{KMi} and \ref{extremal} were answered negatively during the writing of this paper. Nicolas Monod gave a counterexample to Question \ref{extremal} called \emph{the rose}. The rose of Monod is a mere 2-dimensional simplicial complex with Euclidean pieces \cite{Monod:2016kq}.
\section{Examples of CAT(0) spaces and groups}
\subsection{Symmetric spaces}\label{syms}

\begin{definition} A Riemannian manifold $X$ is \emph{symmetric}\index{space!symmetric}\index{symmetric!space} if for any point $x$, there is a global isometry $\sigma_x$ fixing $x$ such that its differential at $x$ is $-\Id$. A \emph{symmetric space of non-compact type} is a symmetric Riemannian manifold with non-positive sectional curvature and no local Euclidean factor.
\end{definition} 

Since we will not consider symmetric spaces which are not of compact type, we will use \emph{symmetric spaces} as a shortcut for symmetric spaces of non-compact type. It is not a priori obvious that these spaces are simply connected but this is actually the case and thus these spaces are CAT(0). 

Standard references for symmetric spaces are \cite{MR1834454} and \cite{MR1441541} but for a more friendly introduction one may read \cite{MR2655308,MR2655309} with two different approaches: a differential one and an algebraic one. Symmetric spaces are strongly related to semi-simple Lie groups. Let us describe the dictionary between symmetric spaces of non-compact type and semi-simple Lie groups without compact factor.

Let $X$ be a symmetric space of non-compact type.
\begin{itemize}
\item The connected component of the identity in the isometry group $G=\Isom^\circ(X)$ is a semi-simple Lie group with trivial center and no compact factor.
\item The stabilizer $K$ of a point $x\in X$ is a maximal compact subgroup of $G$. Any two such subgroups are conjugate.
\item The action $G\action X$ is transitive, thus $X\simeq G/K$.
\end{itemize}

Conversely, If $G$ is a semi-simple Lie group with finite center and no compact factor then $G$ has a maximal compact subgroup $K$ and $X=G/K$ has the structure of a symmetric space of non-compact type. 

This dictionary emphasises that there is a classification of symmetric spaces parallel to the classification of semi-simple Lie groups (see \cite[Chapter IV]{MR1847105} for historical considerations on Cartan's work).

In particular, there is a unique way (up to permutation of the factors) to write any symmetric space $X$ as a product of irreducible factors:

$$X\simeq X_1\times\dots\times X_n.$$
In this splitting, which coincides with the de Rham decomposition, each $X_i$ is an irreducible symmetric space, that is, $X_i\simeq G_i/K_i$ where $G_i$ is a simple Lie group (with trivial center and no compact factor) and $K_i$ is a maximal compact subgroup. So $\Isom(X_1)\times\cdots\times\Isom(X_n)$ is a finite index subgroup of $\Isom(X)$. The finite index corresponds to the finite possibilities of permuting the possible isometric factors.

On a homogeneous space $G/K$ where $G$ is a simple Lie group and $K$ a maximal subgroup, there is a unique $G$-invariant Riemannian metric up to scaling. So two symmetric spaces with the same isometry group are isometric up to scaling the metric on the irreducible factors. 


\begin{definition} A CAT(0) space is \emph{symmetric}\index{symmetric!CAT(0) space}\index{CAT(0) space!symmetric} if for each point $x\in X$, there is an involutive isometry $\sigma_x$ such that $x$ is the unique fixed point of $\sigma_x$.\end{definition}

With this definition, one can easily  see that symmetric CAT(0) spaces are geodesically complete (any geodesic segment is included in some geodesic line) and homogeneous.

The following theorem shows that the differentiable structure of a symmetric space of non-compact type is completely contained in its metric structure.
\begin{theorem}[{\cite[Th\'eor\`eme 8.4]{mathese}}]Let $X$ be a proper symmetric CAT(0) space, then $X$ has an isometric splitting
$$X\simeq E\times Y$$
where $E$ is a Euclidean space and $Y$ a symmetric space of non-compact type.
\end{theorem}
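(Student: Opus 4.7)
The plan is to upgrade the symmetric CAT(0) structure on $X$ to a Riemannian symmetric space structure, and then apply the classical de Rham decomposition to split off the Euclidean factor. \emph{Step 1 (the isometry group).} Set $G = \Isom(X)$. By the Proposition quoted earlier, $G$ is locally compact and second countable in the compact-open topology. The symmetric hypothesis makes $G$ transitive: given $x,y \in X$, the symmetry $\sigma_m$ at the midpoint $m$ of $[x,y]$ exchanges them. The stabilizer $K$ of a basepoint $x_0$ is compact, since its orbits sit on metric spheres around $x_0$ (relatively compact by properness) and Arzel\`a-Ascoli in the compact-open topology then gives compactness of $K$ itself. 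Hence $X \simeq G/K$ with $G$ locally compact and $K$ compact.

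\emph{Step 2 (promoting to a Riemannian manifold).} This is the delicate step. A symmetric CAT(0) space is geodesically complete, and at every $x$ the differential of $\sigma_x$ acts on the tangent cone $T_xX$ as an isometric $-\Id$, reversing every geodesic direction through $x$. Combining this rigid local symmetry with homogeneity, properness, finite topological dimension, and the local contractibility of CAT(0) spaces, one invokes the Montgomery-Zippin / Berestovskii-Plaut regularity machinery to conclude that $G$ has no small subgroups, is a Lie group, and $X$ is a topological manifold. Averaging an inner product on $T_{x_0}X$ over the compact $K$ and transporting it by $G$ yields a $G$-invariant Riemannian metric inducing $d$; the CAT(0) inequality then translates into non-positive sectional curvature, and the $\sigma_x$ become the usual Riemannian geodesic symmetries, so $X$ is a Riemannian symmetric space of non-positive sectional curvature.

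\emph{Step 3 (de Rham decomposition).} $X$ is now complete, simply connected, Riemannian, and symmetric with non-positive sectional curvature. The de Rham decomposition in its symmetric-space form yields $X \simeq E \times X_1 \times \cdots \times X_n$ with $E$ a Euclidean factor and each $X_i$ an irreducible Riemannian symmetric space. The CAT(0) hypothesis rules out compact-type factors, which always contain a 2-plane of positive sectional curvature, and irreducibility of each $X_i$ forbids a further Euclidean summand. Hence every $X_i$ is of non-compact type, and $Y := X_1 \times \cdots \times X_n$ is the desired symmetric space of non-compact type, giving $X \simeq E \times Y$. The main obstacle is clearly Step 2: Steps 1 and 3 are essentially bookkeeping, while extracting a smooth Riemannian structure from purely metric symmetric data is a genuine regularity question. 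The cleanest route is probably to first show that the tangent cones $T_xX$ are finite-dimensional Hilbert spaces, using properness together with the $-\Id$ involution coming from $\sigma_x$, and only then feed the result into the Berestovskii-Plaut framework.
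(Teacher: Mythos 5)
Your overall strategy is sound, but it is genuinely different from the route the paper indicates. The paper's proof works at the level of the group: it splits off the Euclidean factor $E$ as the piece corresponding to the amenable radical of $\Isom(X)$, and then invokes the group-theoretic form of the solution to Hilbert's fifth problem (a connected locally compact group with trivial amenable radical is a semi-simple Lie group without compact factors) to identify the residual factor $Y$ as the symmetric space $G'/K'$ of that Lie group. You instead use the transformation-group form of the same circle of ideas (Montgomery--Zippin) to first promote all of $X$ to a Riemannian manifold, and only then split via de Rham. Both are descendants of Hilbert 5; the paper's version decomposes first and regularizes second, yours regularizes first and decomposes second. What your route buys is that Step 3 becomes completely classical; what it costs is that you must verify the hypotheses of Montgomery--Zippin on the \emph{space} rather than on the group.

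That cost is where the two real issues sit. First, finite topological dimension does not follow from properness: the Hilbert cube with the $\ell^2$ metric is a compact CAT(0) space of infinite dimension. What saves you is geodesic completeness plus local compactness (a theorem of Kleiner, refined by Lytchak--Nagano), but this is a nontrivial input that must be cited, not folded into ``properness''. Second, ``averaging an inner product on $T_{x_0}X$'' presupposes that the tangent cone is already a finite-dimensional vector space, and Montgomery--Zippin only hands you a topological manifold $G/K$ with an invariant \emph{length} metric; a priori that metric could be Finsler or sub-Finsler. You need the CAT(0) inequality precisely here: it forces the parallelogram law on tangent cones, so the invariant norm is Euclidean and the metric is Riemannian (the paper makes the analogous remark for Banach spaces). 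Your closing suggestion --- establish first that $T_xX$ is a finite-dimensional Hilbert space using properness and the $-\Id$ involution --- is indeed the right way to discharge both points at once. The remaining steps (the midpoint argument showing $\sigma_x$ reverses geodesics, exclusion of compact-type factors by contractibility of CAT(0) spaces) are fine.
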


The proof of this theorem essentially relies on the following consequence of the solution of the Hilbert fifth problem: a connected locally compact group with trivial amenable radical is a semi-simple Lie group without compact factors (\cite[Theorem 11.3.4(ii)]{MR1840942}).

Since the manifold structure can be recovered from the metric data, it is natural to ask for a purely metric approach of symmetric spaces of non-compact type. In particular, how to prove the CAT(0) inequality without relying on differential geometry? See \cite[Exercise 1.4.(iii)]{MR3329726}. The hyperbolic space is the simplest example of a symmetric space. Let us prove that it is CAT(0) without using differential geometry.

\begin{proposition}\label{HCAT(0)}The hyperbolic space is CAT(0).
\end{proposition}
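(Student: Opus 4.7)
The plan is to work in the hyperboloid model, where the entire geometry is purely algebraic. Define
$$\mathbb{H}^n = \{x \in \mathbb{R}^{n+1} : B(x,x) = -1,\ x_0 > 0\}, \qquad B(x,y) = -x_0 y_0 + \sum_{i=1}^n x_i y_i,$$
and set $\cosh d(x,y) = -B(x,y)$. First one checks that $d$ is a metric: the reverse Cauchy--Schwarz inequality for the Minkowski form on timelike vectors gives $-B(x,y) \geq 1$ with equality iff $x=y$, and the triangle inequality reduces to a two-dimensional computation in the Minkowski plane spanned by $x,y,z$. Second, for any $x \in \mathbb{H}^n$ and $u \in x^\perp$ with $B(u,u)=1$, the curve $\gamma(t) = \cosh(t)\,x + \sinh(t)\,u$ is an arclength-parametrized geodesic, and $\mathbb{H}^n \cap \mathrm{span}(x,y)$ is the unique geodesic between $x$ and $y$.

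The crucial consequence is an explicit midpoint formula. For $y,z \in \mathbb{H}^n$ with $d(y,z)=2c$, one computes $B(y+z,y+z) = -2(1+\cosh 2c) = -4\cosh^2 c$, so the midpoint is
$$m = \frac{y+z}{2\cosh c}, \qquad \cosh d(x,m) = \frac{\cosh d(x,y) + \cosh d(x,z)}{2\cosh c}.$$
Since any three points of $\mathbb{H}^n$ lie in a totally geodesic copy of $\mathbb{H}^2$, namely $\mathbb{H}^n \cap \mathrm{span}(x,y,z)$, the CAT(0) inequality \eqref{CAT(0)} reduces to its planar version. Writing $a = d(x,y)$, $b = d(x,z)$ and $e = d(x,m)$, the task becomes to prove the scalar inequality
$$e^2 \leq \tfrac{1}{2}(a^2 + b^2) - c^2 \quad \text{whenever} \quad 2\cosh(e)\cosh(c) = \cosh(a) + \cosh(b).$$

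The main obstacle is verifying this scalar inequality. The substitution $s = (a+b)/2$, $t = (a-b)/2$ uses $\cosh a + \cosh b = 2\cosh s \cosh t$ and $\tfrac{1}{2}(a^2+b^2) = s^2 + t^2$ to recast it as: if $\cosh(e)\cosh(c) = \cosh(s)\cosh(t)$ then $e^2 + c^2 \leq s^2 + t^2$. The key auxiliary fact is that $\phi(r) = \cosh(\sqrt{r})$ is convex and increasing on $[0,\infty)$, which is immediate since $\phi'(r) = \sinh(\sqrt{r})/(2\sqrt{r})$ is an increasing function of $r$. From this one deduces that on any level curve $\phi(p)\phi(q) = K$ the sum $p+q$ attains its minimum exactly at the diagonal $p=q$ and grows monotonically with $|p-q|$; the inequality then follows once one checks, using the midpoint formula and the triangle inequality in $\mathbb{H}^n$, that the pair $(e^2,c^2)$ sits no farther from the diagonal than $(s^2,t^2)$. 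I expect this final comparison to be the only step requiring genuine computation, and it encodes the sense in which hyperbolic space is strictly more curved than the Euclidean plane.
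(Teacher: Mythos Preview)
Your route is genuinely different from the paper's. The paper does not attack the midpoint inequality directly; instead it invokes the angle characterisation of CAT(0) (the inequality $c^2\geq a^2+b^2-2ab\cos\gamma$), combines it with the hyperbolic law of cosines, and then compares $\cosh(c)$ with $\cosh\!\big(\sqrt{a^2+b^2-2ab\cos\gamma}\big)$ by differentiating in $\gamma$ and using a power-series estimate for $\gamma\in[0,\pi/2]$, followed by a subdivision trick for obtuse angles. Your explicit midpoint formula $\cosh d(x,m)=(\cosh a+\cosh b)/(2\cosh c)$ and the substitution $s=(a+b)/2$, $t=(a-b)/2$ reduce the problem to a clean scalar statement, which is an attractive alternative.

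That said, there is a genuine gap in your level-curve argument. Convexity of $\phi(r)=\cosh\sqrt r$ does \emph{not} imply that $p+q$ is minimised on the diagonal of $\{\phi(p)\phi(q)=K\}$. For a counterexample take $\phi(r)=e^{r^2}$, which is convex and increasing on $[0,\infty)$; the level sets are arcs of circles $p^2+q^2=\mathrm{const}$, on which $p+q$ is \emph{maximised} at $p=q$. The property you need is that $g(r)=\log\cosh\sqrt r$ is \emph{concave}: then on $g(p)+g(q)=C$ one has $dq/dp=-g'(p)/g'(q)$, and concavity gives $|dq/dp|\leq 1$ whenever $p\geq q$, so $p+q$ is nondecreasing as one moves away from the diagonal. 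This concavity does hold, since $g'(r)=\tanh(\sqrt r)/(2\sqrt r)$ and $u\mapsto\tanh(u)/u$ is decreasing (equivalently $u<\sinh u\cosh u$), but it is a different computation from the one you wrote.

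Once log-concavity is established, your final step becomes simpler than you anticipate and does not need an estimate on $|e^2-c^2|$. The triangle inequality applied to $x,y,z$ gives $|a-b|\leq d(y,z)\leq a+b$, i.e.\ $t\leq c\leq s$. Since $(e^2,c^2)$ and $(s^2,t^2)$ lie on the same (monotone) level curve and the second coordinate $c^2$ lies in the interval $[t^2,s^2]$, the point $(e^2,c^2)$ lies on the arc between $(s^2,t^2)$ and its reflection $(t^2,s^2)$, hence $e^2+c^2\leq s^2+t^2$.
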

To prove this proposition, we use the following characterisation of CAT(0) spaces that relies on the \emph{Alexandrov angle}\index{angle!Alexandrov}\index{Alexandrov angle}, which is notion of angle for geodesic segments with a common endpoint. See \cite[Definition I.12]{MR1744486}.

\begin{lemma}[{\cite[Exercise II.1.9(c)]{MR1744486}}]Let $(X,d)$ be a geodesic metric space. Then $X$ is CAT(0) if and only if for any triple of points  $A,B,C\in X$ with distance $a=d(B,C),\ b=d(A,C),\ c=d(A,B)$ and Alexandrov angle $\gamma$ at the vertex $C$, one has
\begin{equation}\label{angle}c^2\geq a^2+b^2-2ab\cos(\gamma).\end{equation}
\end{lemma}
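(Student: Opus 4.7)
The plan is to prove the two implications separately, using two ingredients that hold in an arbitrary geodesic metric space and therefore do not introduce circularity: the definition of the Alexandrov angle as a limit of comparison angles of small triangles, and the triangle inequality satisfied by Alexandrov angles at a common vertex (both standard, e.g.\ \cite[Chapter I]{MR1744486}).

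For the ``only if'' direction, suppose $X$ is CAT(0). Given $A,B,C$, I form the Euclidean comparison triangle $\bar A\bar B\bar C$ with the same side lengths $a,b,c$ and denote by $\bar\gamma$ its angle at $\bar C$. The thin-triangle formulation of CAT(0) (equivalent to the midpoint definition of the paper by \cite[II.1.7]{MR1744486}) gives that the Alexandrov angle $\gamma$ at $C$ satisfies $\gamma\leq\bar\gamma$. The Euclidean law of cosines reads $c^2=a^2+b^2-2ab\cos\bar\gamma$, and since $\cos$ is decreasing on $[0,\pi]$, we obtain $c^2\geq a^2+b^2-2ab\cos\gamma$.

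For the ``if'' direction, suppose \eqref{angle} holds for every triple, pick $x,y,z\in X$ and a midpoint $m$ of $[y,z]$. At $m$, let $\alpha$ be the Alexandrov angle between $[m,x]$ and $[m,y]$, and $\beta$ the one between $[m,x]$ and $[m,z]$. Applying \eqref{angle} at the vertex $m$ to each of the triangles $\{x,y,m\}$ and $\{x,z,m\}$, using $d(y,m)=d(z,m)=\tfrac{1}{2}d(y,z)$, and summing, I get
$$\tfrac{1}{2}\bigl(d(x,y)^2+d(x,z)^2\bigr)\geq d(x,m)^2+\tfrac{1}{4}d(y,z)^2-\tfrac{1}{2}d(x,m)d(y,z)(\cos\alpha+\cos\beta).$$
The CAT(0) inequality \eqref{CAT(0)} follows at once as soon as one establishes $\cos\alpha+\cos\beta\leq 0$.

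The main obstacle, and the real content of this direction, is to justify that sign inequality. I would argue as follows: for $p\in[m,y]$ and $q\in[m,z]$ at small distances $s,t$ of $m$, the concatenation of $[m,y]$ and $[m,z]$ is the geodesic $[y,z]$, so $d(p,q)=s+t$; the Euclidean comparison angle at $m$ for the triple $(m,p,q)$ is therefore exactly $\pi$, so the Alexandrov angle between $[m,y]$ and $[m,z]$ is $\pi$. Invoking the triangle inequality for Alexandrov angles at $m$ among the three segments $[m,x],[m,y],[m,z]$ yields $\alpha+\beta\geq\pi$, and monotonicity of $\cos$ on $[0,\pi]$ gives $\cos\beta\leq-\cos\alpha$, concluding the proof.
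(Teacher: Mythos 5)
Your proof is correct. There is in fact no in-paper argument to compare against: the paper states this lemma with a citation to Exercise II.1.9(c) of \cite{MR1744486} and does not prove it, and your write-up is the standard solution to that exercise. The ``only if'' direction is exactly the upper-angle comparison $\gamma\leq\bar\gamma$ from \cite[Proposition II.1.7]{MR1744486} together with the Euclidean law of cosines and monotonicity of $\cos$ on $[0,\pi]$. In the ``if'' direction you correctly isolated the one non-trivial point, $\cos\alpha+\cos\beta\leq 0$, and your justification is sound and non-circular: for $p\in[m,y]$, $q\in[m,z]$ at distances $s,t$ from $m$ one has $d(p,q)=s+t$ because $m$ is interior to the geodesic $[y,z]$, so every comparison triangle for $(m,p,q)$ is degenerate and the Alexandrov angle between the two halves of $[y,z]$ equals $\pi$; the triangle inequality for (upper) Alexandrov angles, which holds in an arbitrary metric space \cite[Proposition I.1.14]{MR1744486}, then yields $\alpha+\beta\geq\pi$, hence $\cos\beta\leq\cos(\pi-\alpha)=-\cos\alpha$. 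The algebra is also right: summing the two applications of \eqref{angle} at $m$, with $d(y,m)=d(z,m)=\tfrac12 d(y,z)$, and dropping the nonpositive angle term gives precisely \eqref{CAT(0)}. One small remark on the statement rather than your argument: in a general geodesic space geodesics need not be unique, so the Alexandrov angle $\gamma$ depends on the chosen segments issuing from $C$, and the hypothesis of the ``if'' direction should be read as holding for every such choice; your application at the midpoint $m$ uses exactly the two halves of the given segment $[y,z]$ together with some segment $[m,x]$, which is the correct reading and makes the argument complete.
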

Observe that the equality case is the so-called (Euclidean) \emph{law of cosines}\index{law of cosines}.
\begin{proof}[Proof of Proposition \ref{HCAT(0)}]We use the hyperboloid model \cite[Chapter I.2]{MR1744486}. Let $\mathbb{E}^{n,1}$ be the vector space $\R^{n+1}$ with the non-degenerate quadratic form $\langle u|v \rangle=-u_{n+1}v_{n+1}+\sum_{i=1}^nu_iv_i.$ The hyperbolic space $\mathbb{H}^n$ is then $\{u=(u_1,\dots,u_{n+1})\in\mathbb{E}^{n,1}\ |\ \langle u,v\rangle=-1,\ u_{n+1}>0\}$. The distance between two points $A,B\in\mathbb{H}^n$, is defined by $\cosh(d(A,B))=-\langle A|B\rangle$. To show that this is a metric, the only non-trivial fact to check is the triangle inequality. This is a consequence of the \emph{hyperbolic law of cosines}\index{law of cosines!hyperbolic}\index{hyperbolic!law of cosines}. This law is a simple consequence of the model \cite[I.2.7]{MR1744486}. For any triple of points $A,B,C\in\mathbb{H}^n$ with distance $a=d(B,C),\ b=d(A,C),\ c=d(A,B)$,
$$\cosh(c)=\cosh(a)\cosh(b)-\sinh(a)\sinh(b)\cos(\gamma)$$
where $\gamma$ is the angle at vertex $C$ (which coincides with the Alexandrov angle thanks to \cite[Proposition I.2.9]{MR1744486}). We aim now to show Inequality \eqref{angle}.

 We consider first the case $0\leq \gamma\leq \pi/2$. Fix side lengths $a,b,c$, define $f(\gamma)= \cosh(a)\cosh(b)-\sinh(a)\sinh(b)\cos(\gamma)$ and $g(\gamma)=\cosh\left(\sqrt{a^2+b^2-2ab\cos(\gamma)}\right)$. Thus, our goal is to show that $\varphi(\gamma)=f(\gamma)-g(\gamma)\geq0$. One has $\varphi(0)=0$. For convenience, set $c_0=\sqrt{a^2+b^2-2ab\cos(\gamma)}$ (that is, the length of the third side in a Euclidean triangle with sides $a,b$ and angle $\gamma$). We have:

\begin{align*}
f'(\gamma)&=\sinh(a)\sinh(b)\sin(\gamma)=\sum_{n,m\geq0}\frac{a^{2n+1}b^{2m+1}}{(2n+1)!(2m+1)!}\sin(\gamma)\\
g'(\gamma)&=\frac{\sinh(c_0)}{c_0}ab\sin(\gamma)=\sum_{k\geq0}\frac{\left(a^2+b^2-2ab\cos(\gamma)\right)^k}{(2k+1)!}ab\sin(\gamma)\\
&\leq\sum_{k\geq0}\frac{(a^2+b^2)^k}{(2k+1)!}ab\sin(\gamma)\\
&\leq\sum_{k\geq0}\sum_{0\leq i\leq k}\frac{k!}{i!(k-i)!(2k+1)!}a^{2i+1}b^{2(k-i)+1}\sin(\gamma)
\end{align*}
In the last double sum, the coefficient in front of $a^{2n+1}b^{2m+1}$ is $\frac{(n+m)!}{n!m!(2(n+m)+1)!}$. Since $(n+m)!\geq n!m!$ and $(2(n+m)+1)!\geq(2n+1)!(2m+1)!$, one has $\frac{1}{(2n+1)!(2m+1)!}\geq\frac{(n+m)!}{n!m!(2(n+m)+1)!}$. Thus $\varphi'\geq 0$ on $[0,\pi/2]$ and $\varphi(\gamma)\geq0$ on $[0,\pi/2]$.

Now, let us consider the case $\gamma\geq \pi/2$. Choose $D\in [A,B]$ such the angles $\gamma'=\angle([C,A],[C,D])$ and $\gamma''=\angle([CD,CB])$ are at most $\pi/2$. Let $c'=d(A,D)$, $c''=d(D,B)$ and $c'_0,c''_0$ the corresponding Euclidean lengths (that is $c_0''=\sqrt{a^2+b^2-2ab\cos(\gamma'')})$. The former case shows that $c'\geq c'_0$ and $c''\geq c''_0$. Moreover, in the Euclidean plane, one has $c_0'+c_0''\geq c_0$ and finally $c=c'+c''\geq c_0'+c_0''\geq c_0$.
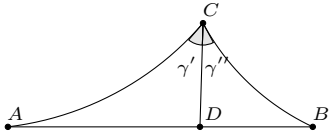
\begin{figure}[h]
\begin{center}
\begin{tikzpicture}[line cap=round,line join=round,>=triangle 45,x=1.0cm,y=1.0cm]
\clip(-0.42695519773771173,-0.3645455038614315) rectangle (4.51726170877022,1.6627292749132139);
\draw [shift={(2.56,1.38)},fill=black,fill opacity=0.1] (0,0) -- (-130.24594596471982:0.2916942127733293) arc (-130.24594596471982:-91.6602823689828:0.2916942127733293) -- cycle;
\draw [shift={(2.56,1.38)},fill=black,fill opacity=0.1] (0,0) -- (-91.66028236898282:0.2916942127733293) arc (-91.66028236898282:-62.13861201329126:0.2916942127733293) -- cycle;
\draw [shift={(-0.4782681484984628,3.9517148262000474)}] plot[domain=4.83283119139643:5.5807610395155,variable=\t]({1.*3.9805515308154744*cos(\t r)+0.*3.9805515308154744*sin(\t r)},{0.*3.9805515308154744*cos(\t r)+1.*3.9805515308154744*sin(\t r)});
\draw [shift={(5.359366515837104,2.8597737556561085)}] plot[domain=3.627865608133455:4.26866279788915,variable=\t]({1.*3.166414890363951*cos(\t r)+0.*3.166414890363951*sin(\t r)},{0.*3.166414890363951*cos(\t r)+1.*3.166414890363951*sin(\t r)});
\draw (0.,0.)-- (4.,0.);
\draw (2.56,1.38)-- (2.52,0.);
\begin{scriptsize}
\draw [fill=black] (0.,0.) circle (1.0pt);
\draw[color=black] (0.09809438525428099,0.17508878976922948) node {$A$};
\draw [fill=black] (4.,0.) circle (1.0pt);
\draw[color=black] (4.1088898108875584,0.17508878976922948) node {$B$};
\draw [fill=black] (2.56,1.38) circle (1.0pt);
\draw[color=black] (2.6650034576595782,1.560636300442548) node {$C$};
\draw [fill=black] (2.52,0.) circle (1.0pt);
\draw[color=black] (2.7,0.17508878976922948) node {$D$};
\draw[color=black] (2.35,0.8) node {$\gamma'$};
\draw[color=black] (2.75,0.8) node {$\gamma''$};
\end{scriptsize}
\end{tikzpicture}
\caption{Point $D$ and  angles $\gamma',\gamma''$}
\end{center}
\end{figure}
\end{proof}
Proper symmetric CAT(0) spaces are well understood and one can ask what are the symmetric CAT(0) spaces outside the proper world. One can consruct infinite-dimensional analogs of the symmetric spaces of non-compact type that remain in the CAT(0) world. These are manifolds modelled on Hilbert spaces with a Riemannian metric. They can be classified in a  way analogous to the finite-dimensional case \cite[Theorem 1.8]{BD15}. There are also symmetric CAT(0) spaces that are not Riemannian manifolds  \cite[\S 4.3]{BD15}.
\subsection{Euclidean Buildings}

Buildings\index{building} were introduced by Jacques Tits to get a geometry to understand properties of algebraic groups. In some sense, this is a converse of Klein's Erlangen Program. Such a geometry was already existing for semi-simple Lie groups. It is given by symmetric spaces. Euclidean buildings are analogs in the non-Archimedean world. 

Here we use the Kleiner-Leeb axiomatisation of Euclidean buildings \cite{MR1608566}. This axiomatisation starts from a CAT(0) space and thus is more geometric than the original definition due to Tits. It is a priori not completely clear that one can recover Tits' definition from the one of Kleiner and Leeb. Parreau proved that this is actually the case   \cite[\S 2.7]{MR1796138}. Her definition is a bit more general to cover the case of non complete spaces.

Let $E$ be a Euclidean space. Its boundary at infinity $\bo E$ endowed with the angular metric is a Euclidean sphere of dimension  dim$(E)-1$. Since isometries of $E$ are affine and translations act trivially on $\bo E$, one obtain a homomorphism 
\[\rho\colon\Isom(E)\to\Isom(\bo E)\]
that associates its linear part to every Euclidean isometry. A subgroup $W_\mathrm{Aff}\leq\Isom(E)$ is called an \emph{affine Weyl group}\index{Weyl group!affine}\index{affine Weyl group} if it is generated by reflections through hyperplanes and if $W:=\rho(W_\mathrm{Aff})$ is a finite subgroup of $\Isom(\bo E)$. 

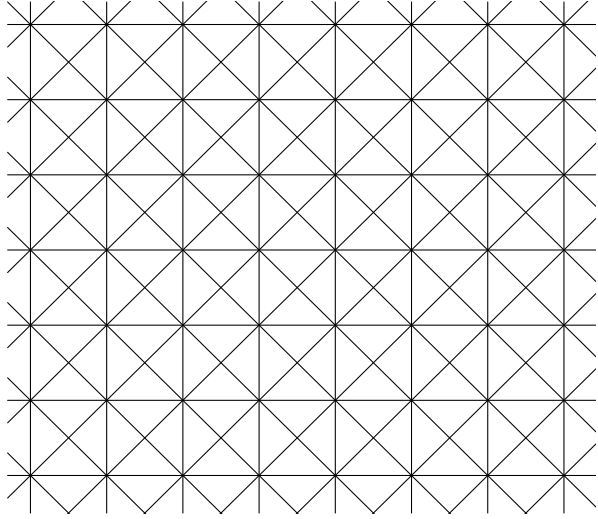
\begin{figure}[h]
\begin{center}
\begin{tikzpicture}[line cap=round,line join=round,>=triangle 45,x=1.0cm,y=1.0cm]
\clip(-4.3,-0.5) rectangle (3.42,6.3);
\draw [domain=-4.3:3.42] plot(\x,{(-10.-1.*\x)/1.});
\draw [domain=-4.3:3.42] plot(\x,{(-9.-1.*\x)/1.});
\draw [domain=-4.3:3.42] plot(\x,{(-8.-1.*\x)/1.});
\draw [domain=-4.3:3.42] plot(\x,{(-7.-1.*\x)/1.});
\draw [domain=-4.3:3.42] plot(\x,{(-6.-1.*\x)/1.});
\draw [domain=-4.3:3.42] plot(\x,{(-5.-1.*\x)/1.});
\draw [domain=-4.3:3.42] plot(\x,{(-4.-1.*\x)/1.});
\draw [domain=-4.3:3.42] plot(\x,{(-3.-1.*\x)/1.});
\draw [domain=-4.3:3.42] plot(\x,{(-2.-1.*\x)/1.});
\draw [domain=-4.3:3.42] plot(\x,{(-1.-1.*\x)/1.});
\draw [domain=-4.3:3.42] plot(\x,{(-0.-1.*\x)/1.});
\draw [domain=-4.3:3.42] plot(\x,{(--1.-1.*\x)/1.});
\draw [domain=-4.3:3.42] plot(\x,{(--2.-1.*\x)/1.});
\draw [domain=-4.3:3.42] plot(\x,{(--3.-1.*\x)/1.});
\draw [domain=-4.3:3.42] plot(\x,{(--4.-1.*\x)/1.});
\draw [domain=-4.3:3.42] plot(\x,{(--5.-1.*\x)/1.});
\draw [domain=-4.3:3.42] plot(\x,{(--6.-1.*\x)/1.});
\draw [domain=-4.3:3.42] plot(\x,{(--7.-1.*\x)/1.});
\draw [domain=-4.3:3.42] plot(\x,{(--8.-1.*\x)/1.});
\draw [domain=-4.3:3.42] plot(\x,{(--9.-1.*\x)/1.});
\draw [domain=-4.3:3.42] plot(\x,{(--10.-1.*\x)/1.});
\draw [domain=-4.3:3.42] plot(\x,{(-10.-1.*\x)/-1.});
\draw [domain=-4.3:3.42] plot(\x,{(-9.-1.*\x)/-1.});
\draw [domain=-4.3:3.42] plot(\x,{(-8.-1.*\x)/-1.});
\draw [domain=-4.3:3.42] plot(\x,{(-7.-1.*\x)/-1.});
\draw [domain=-4.3:3.42] plot(\x,{(-6.-1.*\x)/-1.});
\draw [domain=-4.3:3.42] plot(\x,{(-5.-1.*\x)/-1.});
\draw [domain=-4.3:3.42] plot(\x,{(-4.-1.*\x)/-1.});
\draw [domain=-4.3:3.42] plot(\x,{(-3.-1.*\x)/-1.});
\draw [domain=-4.3:3.42] plot(\x,{(-2.-1.*\x)/-1.});
\draw [domain=-4.3:3.42] plot(\x,{(-1.-1.*\x)/-1.});
\draw [domain=-4.3:3.42] plot(\x,{(-0.-1.*\x)/-1.});
\draw [domain=-4.3:3.42] plot(\x,{(--1.-1.*\x)/-1.});
\draw [domain=-4.3:3.42] plot(\x,{(--2.-1.*\x)/-1.});
\draw [domain=-4.3:3.42] plot(\x,{(--3.-1.*\x)/-1.});
\draw [domain=-4.3:3.42] plot(\x,{(--4.-1.*\x)/-1.});
\draw [domain=-4.3:3.42] plot(\x,{(--5.-1.*\x)/-1.});
\draw [domain=-4.3:3.42] plot(\x,{(--6.-1.*\x)/-1.});
\draw [domain=-4.3:3.42] plot(\x,{(--7.-1.*\x)/-1.});
\draw [domain=-4.3:3.42] plot(\x,{(--8.-1.*\x)/-1.});
\draw [domain=-4.3:3.42] plot(\x,{(--9.-1.*\x)/-1.});
\draw [domain=-4.3:3.42] plot(\x,{(--10.-1.*\x)/-1.});
\draw (-10.,-0.5) -- (-10.,6.3);
\draw (-9.,-0.5) -- (-9.,6.3);
\draw (-8.,-0.5) -- (-8.,6.3);
\draw (-7.,-0.5) -- (-7.,6.3);
\draw (-6.,-0.5) -- (-6.,6.3);
\draw (-5.,-0.5) -- (-5.,6.3);
\draw (-4.,-0.5) -- (-4.,6.3);
\draw (-3.,-0.5) -- (-3.,6.3);
\draw (-2.,-0.5) -- (-2.,6.3);
\draw (-1.,-0.5) -- (-1.,6.3);
\draw (0.,-0.5) -- (0.,6.3);
\draw (1.,-0.5) -- (1.,6.3);
\draw (2.,-0.5) -- (2.,6.3);
\draw (3.,-0.5) -- (3.,6.3);
\draw (4.,-0.5) -- (4.,6.3);
\draw (5.,-0.5) -- (5.,6.3);
\draw (6.,-0.5) -- (6.,6.3);
\draw (7.,-0.5) -- (7.,6.3);
\draw (8.,-0.5) -- (8.,6.3);
\draw (9.,-0.5) -- (9.,6.3);
\draw (10.,-0.5) -- (10.,6.3);
\draw [domain=-4.3:3.42] plot(\x,{(-10.-0.*\x)/1.});
\draw [domain=-4.3:3.42] plot(\x,{(-9.-0.*\x)/1.});
\draw [domain=-4.3:3.42] plot(\x,{(-8.-0.*\x)/1.});
\draw [domain=-4.3:3.42] plot(\x,{(-7.-0.*\x)/1.});
\draw [domain=-4.3:3.42] plot(\x,{(-6.-0.*\x)/1.});
\draw [domain=-4.3:3.42] plot(\x,{(-5.-0.*\x)/1.});
\draw [domain=-4.3:3.42] plot(\x,{(-4.-0.*\x)/1.});
\draw [domain=-4.3:3.42] plot(\x,{(-3.-0.*\x)/1.});
\draw [domain=-4.3:3.42] plot(\x,{(-2.-0.*\x)/1.});
\draw [domain=-4.3:3.42] plot(\x,{(-1.-0.*\x)/1.});
\draw [domain=-4.3:3.42] plot(\x,{(-0.-0.*\x)/1.});
\draw [domain=-4.3:3.42] plot(\x,{(--1.-0.*\x)/1.});
\draw [domain=-4.3:3.42] plot(\x,{(--2.-0.*\x)/1.});
\draw [domain=-4.3:3.42] plot(\x,{(--3.-0.*\x)/1.});
\draw [domain=-4.3:3.42] plot(\x,{(--4.-0.*\x)/1.});
\draw [domain=-4.3:3.42] plot(\x,{(--5.-0.*\x)/1.});
\draw [domain=-4.3:3.42] plot(\x,{(--6.-0.*\x)/1.});
\draw [domain=-4.3:3.42] plot(\x,{(--7.-0.*\x)/1.});
\draw [domain=-4.3:3.42] plot(\x,{(--8.-0.*\x)/1.});
\draw [domain=-4.3:3.42] plot(\x,{(--9.-0.*\x)/1.});
\draw [domain=-4.3:3.42] plot(\x,{(--10.-0.*\x)/1.});
\end{tikzpicture}
\caption{Reflection hyperplanes for the discrete affine Weyl group $B_2\ltimes \mathbb{Z}^2$.}
\end{center}
\end{figure}The group $W$ is called the \emph{spherical Weyl group}\index{Weyl group!spherical}\index{spherical!Weyl group} associated with $W_\mathrm{Aff}$. If $W_\mathrm{Aff}$ is an affine Weyl group then $(E, W_\mathrm{Aff})$ is called a \emph{Euclidean Coxeter complex} and $(\bo E,W)$ is the associated \emph{spherical Coxeter complex at infinity}\index{Coxeter complex!spherical}\index{spherical!Coxeter complex}. Its \emph{anisotropy polyhedron}\index{anisotropy polyhedron}\index{polyhedron!anisotropy} is the spherical polyhedron 
\[\Delta:=\bo E/W.\]

\begin{figure}[h]
\begin{center}
\includegraphics[width=.4\textwidth]{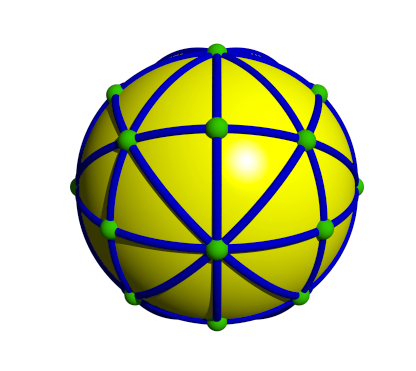}
\caption{The sphere at infinity $\bo E$ with the trace of reflection hyperplanes for the spherical Weyl group $B_3$.}
\end{center}
\end{figure}
An oriented segment (not reduced to a point) $\overline{xy}$ of $E$ determines  a unique point of $\bo E$ and the projection of this point to $\Delta$ is called the $\Delta$-\emph{direction} of  $\overline{xy}$. Let  $\pi$ be the projection $\bo E\to\Delta$. If $\delta_1,\delta_2$ are two points of $\Delta$, we introduce the finite set
\[D(\delta_1,\delta_2)=\{\angle(\xi_1,\xi_2))|\xi_1,\xi_2\in\bo E,\ \pi(\xi_1)=\delta_1,\ \pi(\xi_2)=\delta_2\}.\]
\begin{definition}\label{dfb}Let $(E, W_\mathrm{Aff})$ be a Euclidean Coxeter complex. A \emph{Euclidean building modelled on}\index{Euclidean building}\index{building!Euclidean} $(E, W_\mathrm{Aff})$ is a complete CAT(0) space $(X,d)$ with 
\begin{itemize}
\item[(i)] a map $\theta$ from the set of oriented segments not reduced to a point to $\Delta$;
\item[(ii)] a collection, $\mathcal{A}$, called \emph{atlas}\index{atlas}, of isometric embeddings $\iota\colon E\to X$ that preserve $\Delta$-directions. This atlas is closed under precompositions with isometries in $W_\mathrm{Aff}$. The image of such an isometric embedding $\iota$ is called an \emph{apartment}\index{apartment}.
\end{itemize}
Moreover the following properties must hold.
\begin{itemize}
\item[(1)] For all $x,y,z\in X$ such that $y\neq z$ and $x\neq z$,
\[d_\Delta(\theta(\overline{xy}),\theta(\overline{xz}))\leq\overline{\angle}_x(y,z).\]
\item[(2)] The angle between two geodesic segments  $\overline{xy}$ and $\overline{xz}$ is in $D(\theta(\overline{xy}),\theta(\overline{xz}))$.
\item[(3)] Every geodesic segment, ray or line is contained in an apartment.
\item[(4)] If $A_1$ and $A_2$ are two apartments with a non-empty intersection then the \emph{transition map} $\iota^{-1}_{A_2}\circ\iota_{A_1}\colon \iota_{A_1}^{-1}(A_1\cap A_2)\to\iota_{A_2}^{-1}(A_1\cap A_2)$ is the restriction of an element of $W_{\mathrm{Aff}}$.
\end{itemize}
\end{definition}

Euclidean subspaces of a Euclidean building are included in some apartment and thus the rank of a Euclidean building is the dimension of any apartment that is the dimension of the model Euclidean space $E$. When the rank is 1, that is, $E$ is the real line, one recover (in a cumbersome way) real trees with no leaves.

\begin{remark} A Euclidean building is called \emph{discrete}\index{discrete building}\index{building!discrete} if the translation part of $W_\mathrm{Aff}$ is discrete.  If $X$ is an irreducible discrete Euclidean building such that $W_\mathrm{Aff}$ acts cocompactly  then $X$ has a simplicial structure. Definition \ref{dfb} allows \emph{non-discrete} buildings in a similar way that real trees are generalisations of simplicial trees.
\end{remark} 

The purpose of the Kleiner-Leeb definition is to be stable under asymptotic cones. Recall that the asymptotic cone of a metric space $(X,d)$ is a limit of  a sequence of metric spaces $(X, \lambda_n^{-1} d)$ with $\lambda_n\to+\infty$. For example, ideal triangles (that is, triangles with vertices at infinity) in the hyperbolic plane become thinner and thinner when $\lambda_n\to+\infty$ and the limit is actually a real tree. 
Asymptotic cones of Euclidean buildings are Euclidean buildings of the same rank and spherical Weyl group \cite[Theorem 5.1.1]{MR1608566}. Moreover asymptotic cones of symmetric spaces $X$ are Euclidean buildings of the same rank \cite[Theorem 5.2.1]{MR1608566}. In that case, the spherical Weyl group is the classical Weyl group of the semi-simple group $\Isom(X)$. This result holds also in infinite dimensions: The asymptotic cone of a symmetric space of infinite dimension and finite rank is a Euclidean building of the same rank \cite[Theorem 1.3]{MR3044451}.

There is a correspondance between Euclidean buildings and algebraic groups over fields with a non-Archimedean valuation (when the rank is at least 3). Euclidean buildings associated to algebraic groups are called \emph{Bruhat-Tits buildings}\index{Bruhat-Tits building}\index{building!Bruhat-Tits}. To prove the correspondance, Tits showed how to reconstruct the group from the building \cite{MR0470099,MR843391}. See \cite[\S 11.9]{MR2439729} and references therein for an overview.

Euclidean buildings and symmetric spaces share a common feature at infinity: their boundaries endowed with the Tits metric are \emph{spherical buildings}\index{spherical!building}\index{building!spherical}. We do not aim to give a definition but a taste of their geometric and combinatorial natures. Spherical buildings can be defined as metric objects with a condition of curvature bounded above \cite[\S 3.2]{MR1608566} and some more combinatorial properties. They are union of Euclidean spheres of the same dimension. These spheres are glued together according to the action of a finite Coxeter group. For a soft introduction, we recommend \cite[II.10.71]{MR1744486}. Spherical buildings encode algebraic data. For example, let us consider the spherical building $B$  associated to $\SL(V)$ where $V$ is some finite-dimensional vector field. This building $B$ has the structure of a simplicial complex where simplices are in bijection with flags in $V$. Inclusion of flags corresponds to inclusion of simplices.

The presence of this spherical building at infinity is the essential source of rigidity phenomena detailed in Section \ref{rigidity}.

\subsection{CAT(0) cube complexes}
Besides symmetric spaces and Euclidean buildings, CAT(0) cube complexes provide a very nice class of CAT(0) spaces with interesting actions. They belong to the class of polyhedral cell complexes which are CAT(0) like discrete Euclidean buildings. For references about groups acting on CAT(0) complexes, one may consult \cite[Chapter II.12]{MR1744486}. 

CAT(0) cube complexes are sufficiently well understood to obtain very strong results like the Tits Alternative (Theorem \ref{Titscc}) or a proof of the Rank Rigidity conjecture (Section \ref{rrc}). CAT(0) cube complexes have both a geometric structure (CAT(0) inequality) and a more combinatorial one: there are spaces with walls. The latter structure allows to understand properties of groups with an analytic flavour like the Haagerup property and Property (T). Moreover, the combinatorics of spaces with walls allow to construct CAT(0) cube complexes. This is the so-called cubulation process and we will see some remarkable applications of that construction. For an introductory paper to the subject of CAT(0) cube complexes, we recommend \cite{MR3329724}.

Roughly speaking, a CAT(0) cube complex is a collection of cubes glued together isometrically along faces in such a way that the resulting length space is CAT(0). 

A \emph{cube}\index{cube} is a metric space isometric to $[0,1]^n$ for some $n$ (this $n$ is called the \emph{dimension} of the cube). A \emph{face} of a cube $C$ is a subset corresponding to fixing $k$ coordinates by making them equal to 0 or 1. In particular, a face of $C$ is itself a cube of dimension $n-k$ where $n$ is the dimension of $C$ and $k$ is the number of fixed coordinates. An \emph{attaching map} between two cubes is an isometry between two faces of two different cubes. A \emph{cubical complex}\index{cubical complex}\index{complex!cubical} is the space obtained from a collection of cubes and a collection of attaching maps where a point and its image via an attaching map are identified. See \cite[Definition 7.32]{MR1744486} for details.

The \emph{dimension}\index{dimension!cubical complex} of a cubical complex $X$ is the supremum of the dimensions of its cubes. It may be infinite, otherwise one say that $X$ has finite dimension.  

A cubical complex $X$ is naturally endowed with a length metric obtained by minimising lengths of continuous paths between two points where the distance between two points in a common cube is the Euclidean one. A \emph{vertex}\index{vertex} (or a cube of dimension 0) is a vertex of some cube. The \emph{link}\index{link} $\Lk(v)$ at a vertex $v\in X$ is the simplicial complex whose vertices are edges with one end $v$. Edges $e_0,e_1,\dots,e_n\in\Lk(v)$ are the vertices of an $n$-simplex in $\Lk(v)$ if there is a cube $C$ in $X$ such that $v$ is a vertex of $C$ and the $e_i$'s are edges of $C$.

A simplicial complex is a \emph{flag complex}\index{flag complex}\index{complex!flag} if for any finite collection of vertices $v_0,\dots,v_n$ such that for any $i,j$, $v_i$ and $v_j$ are linked by an edge there is an $n$-simplex with vertices $v_0,\dots,v_n$.

Gromov gave a local characterisation of cubical complexes that are CAT(0). Such cubical complexes are called \emph{CAT(0) cube complexes}\index{CAT(0)!cube complex}. A proof in finite dimensions can be found in \cite[Theorem II.5.20]{MR1744486}. The general case is treated in \cite[Theorem B.8]{MR3029427}.

\begin{theorem}[Gromov link condition] A cubical complex $X$ is CAT(0) if and only if $X$ is simply connected and for any vertex the associated link is a flag complex.
\end{theorem}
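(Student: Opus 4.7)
The plan is to reduce the global CAT(0) condition to a local one via the Cartan-Hadamard theorem stated above: since $X$ is complete, $X$ is CAT(0) iff $X$ is simply connected and locally CAT(0). Simple connectedness is necessary for the forward direction (CAT(0) spaces are contractible), so the substance of the theorem is the equivalence between local CAT(0) and the flag condition on links.

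Next I would localize the check of the CAT(0) inequality to vertices. Away from the $0$-skeleton, every point $x \in X$ lies in the interior of some positive-dimensional face, and a small ball around $x$ is isometric to a product $\R^k \times B$ where $B$ is a neighborhood of a vertex in a lower-dimensional cube complex; iterating, the only genuinely non-Euclidean local models occur at vertices. At a vertex $v$, a small ball $B(v,\varepsilon)$ is isometric to the corresponding ball about the apex in the Euclidean cone $C\Lk(v)$, where $\Lk(v)$ carries its natural \emph{all-right spherical complex} structure: every $n$-simplex of $\Lk(v)$ corresponding to an $(n+1)$-cube containing $v$ is the standard spherical right-angled $n$-simplex in which all edges (and all dihedral angles) have length $\pi/2$.

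Then I would invoke Berestovskii's theorem: the Euclidean cone $CY$ over a metric space $Y$ is CAT(0) iff $Y$ is CAT(1). This reduces the problem to showing that an all-right spherical complex $L$ is CAT(1) iff $L$ is a flag complex. For the \emph{only if} direction, suppose $L$ is not flag, so there exist pairwise adjacent vertices $v_0,\dots,v_n$ that do not span a simplex; choosing a minimal such configuration, one can produce a geodesic loop of length strictly less than $2\pi$ in $L$ that is not nullhomotopic in the relevant piece, or equivalently exhibit a geodesic triangle failing the CAT(1) four-point comparison. For the \emph{if} direction, one proceeds by induction on $\dim L$: the link of any vertex in an all-right spherical complex is again an all-right spherical complex, and the flag property passes to vertex links, so inductively all vertex links of $L$ are CAT(1); combined with a spherical analog of Cartan-Hadamard (locally CAT(1) plus systole $\geq 2\pi$ implies CAT(1)) this yields the conclusion.

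The main obstacle is precisely this last spherical-complex equivalence, i.e., Gromov's original combinatorial-to-metric statement. The subtle point is the spherical analog of Cartan-Hadamard: in positive curvature, being locally CAT(1) and simply connected is not enough, and one must rule out short closed geodesics. Here the flag condition does exactly this job — it prevents ``empty simplices'' whose boundaries would produce geodesic loops of length in $(0, 2\pi)$ — but carefully converting the flag condition at every stratum into a systolic lower bound, and threading this through the induction so that one may apply the spherical Cartan-Hadamard theorem at each step, is the genuinely nontrivial part of the argument.
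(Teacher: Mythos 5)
The paper does not actually prove this theorem: it only records it and points to \cite[Theorem II.5.20]{MR1744486} for the finite-dimensional case and to \cite[Theorem B.8]{MR3029427} for the general case. Your outline is precisely the route those references take: Cartan--Hadamard to reduce to the local statement, product decomposition of small balls to localize at vertices, Berestovskii's criterion (the Euclidean cone $CY$ is CAT(0) iff $Y$ is CAT(1)) to pass to the all-right spherical structure on $\Lk(v)$, and then Gromov's lemma that an all-right spherical complex is CAT(1) iff it is flag, proved by induction on dimension together with the spherical Cartan--Hadamard theorem and a systole bound. So the approach is correct and standard, and you have correctly isolated where the real work lies. Two caveats. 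First, as a proof your text is a roadmap: the crux --- Gromov's lemma, and in particular the verification that the flag condition forces every closed local geodesic in every iterated link to have length at least $2\pi$ --- is announced but not carried out, and this is exactly the part that occupies most of the argument in \cite{MR1744486}. (For the ``only if'' direction, the clean version is: a minimal empty simplex on $v_0,\dots,v_n$ forces, after passing to iterated links, an empty triangle, whose three edges form a closed local geodesic of length $3\pi/2<2\pi$.) Second, your argument implicitly uses finite dimensionality in several places --- existence of geodesics in the length metric, compactness arguments in the spherical Cartan--Hadamard step --- and the paper is explicit that the infinite-dimensional case requires the separate treatment of \cite{MR3029427}; if you intend the statement in full generality you must either restrict to finite dimension or address this.
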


In dimension 1, CAT(0) cube complexes are exactly simplicial trees with the metric where any edge has length 1.

\begin{definition}Let $C$ be cube. A \emph{midcube}\index{midcube} is the subset of $C$ where one coordinate has been fixed to $1/2$.
\end{definition}

It is a remarkable theorem of Sageev that midcubes of CAT(0) cube complexes can be extended along the whole complex. Let us describe that construction. We define an equivalence relation on the set of midcubes. For two midcubes $M_1,M_2$ in cubes $C_1,C_2$, we define $M_1\simeq M_2$ if $C_1$ and $C_2$ have a common edge $e$ such that $M_i\cap e$ is the mid-point of $e$ for $i=1,2$.  The equivalence relation we consider is the one generated by this relation.

\begin{definition} A \emph{hyperplane}\index{hyperplane} $H$ of a CAT(0) cube complex is the union of midcubes in a equivalence class. The \emph{carrier}\index{carrier!hyperplane} $C(H)$ of $H$ is the union of cubes intersecting $H$.
\end{definition}

\begin{theorem}[{\cite{MR1347406}}]Let $H$ be a hyperplane of some CAT(0) cube complex $X$.
\begin{itemize}
\item $H$ is a closed convex subspace of $X$,
\item $H$ has the structure of a CAT(0) cube complex of dimension at most $\dim(X)-1$,
\item $C(H)$ is isometric to $H\times [-1/2,1/2]$,
\item $X\setminus H$ has two connected components which are convex subsets of $X$.
\end{itemize}
\end{theorem}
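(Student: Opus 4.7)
The plan is to establish the four properties in the order (3), (4), (1), (2), because the product structure of the carrier unlocks the rest. The combinatorial starting point is a square lemma: in any $2$-dimensional cube of $X$, the two pairs of opposite edges belong to distinct equivalence classes of midcubes, while opposite edges do belong to the same class. This is immediate from applying the definition of the relation generating $\simeq$ to the two midcubes of a square. A consequence is that at most one midcube of any cube $C$ lies in the class of $H$.

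Next I would build the product structure of the carrier. For each cube $C$ whose midcube $M$ lies in the class of $H$, the cube has a canonical decomposition $C \cong M \times [0,1]$ with $M = M \times \{1/2\}$, obtained by using the coordinate in $C$ that $M$ cuts in half. The square lemma upgrades this to a global consistency statement: if two cubes $C_1$ and $C_2$ in $C(H)$ share a face $F$, then the $[0,1]$-directions they induce on $F$ agree, because the edges of $F$ parallel to the two directions are classified by whether their midpoint lies in the class of $H$ or not. Gluing these product decompositions yields the isometry $C(H) \cong H \times [-1/2,1/2]$ of (3), with $H$ arising as a cube complex whose cubes are precisely the midcubes in the equivalence class, attached along the midcube faces they share.

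For the separation statement (4), I would define halfspaces combinatorially. Call an edge of $X$ \emph{dual} to $H$ if its midpoint lies on $H$, and orient it using the product structure on its carrier cube, so that each dual edge has a well-defined $-$ end and $+$ end. For any edge-path $v_0 e_1 v_1 e_2 \cdots e_n v_n$ in the $1$-skeleton, I associate the signed count of dual crossings modulo $2$. The square lemma implies that going around the boundary of any $2$-cube either crosses $H$ zero times or twice with opposite signs, hence the count descends to a well-defined $\mathbb{Z}/2$-valued function on vertices since $X$ is simply connected. Extending this coloring to $X \setminus H$ via the product structure on carriers partitions $X \setminus H$ into two disjoint open sets, giving (4) once I verify both halfspaces are convex, again by the square lemma applied to geodesic edge-paths.

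Closedness of $H$ in (1) follows because each midcube is closed in its cube and $H$ meets each cube in a single midcube (or not at all), so $H$ is a subcomplex in the intrinsic cell structure. Convexity is the main obstacle and occupies the heart of the argument: given $x,y \in H$, if the $X$-geodesic $\gamma$ from $x$ to $y$ left $H$ it would enter one open halfspace and have to return, and the product decomposition of the carrier near each re-entry point together with the CAT(0) inequality allows one to produce a strictly shorter path by projecting the excursion onto $H$ along the $[-1/2,1/2]$ factor, contradicting that $\gamma$ is a geodesic. Finally for (2), the cube complex structure on $H$ has cubes of dimension at most $\dim(X)-1$ by construction; it is simply connected because $C(H) \cong H \times [-1/2,1/2]$ is a deformation retract of itself onto $H$ and is convex in $X$; and its vertex links embed into the corresponding links in $X$ as induced subcomplexes, so they remain flag and Gromov's link condition certifies that $H$ is CAT(0).
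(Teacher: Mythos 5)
The paper does not prove this theorem; it is quoted from Sageev's paper \cite{MR1347406}, so there is no in-text argument to compare against. Judged on its own, your proposal has a genuine gap, and it sits at the foundation of everything else. Your ``square lemma'' asserts that the two midcubes of a single square lie in \emph{distinct} equivalence classes, and you justify this as ``immediate from applying the definition of the relation generating $\simeq$.'' What is immediate is only that the two midcubes are not related by a \emph{single} application of the generating relation (they meet no common edge in its midpoint). But $\simeq$ is the transitive closure, and what must be excluded is a \emph{chain} of elementary parallelisms leading from one midcube of a cube back to a different midcube of the same cube. That exclusion is precisely Sageev's non-self-intersection theorem; it genuinely uses simple connectivity together with the flag condition on links (via a disc-diagram or minimal-chain argument), and it is false without those hypotheses --- for instance, gluing the four sides of one square in pairs so that adjacent sides are identified produces a square complex in which the two midcubes of the square are directly equivalent through the common identified edge. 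Since your product structure on the carrier, the mod-$2$ crossing invariant for (4), and the convexity argument for (1) all rest on ``at most one midcube of any cube lies in the class of $H$,'' the proof as written does not go through.

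A second, related omission: even granting non-self-intersection, the isometry $C(H)\cong H\times[-1/2,1/2]$ in (3) also requires that $H$ does not \emph{self-osculate}, i.e.\ that two distinct edges dual to $H$ sharing a vertex on the same side always span a square. If they do not, the natural map $H\times[-1/2,1/2]\to C(H)$ fails to be injective at that vertex. This is a second combinatorial fact of the same nature (again a consequence of simple connectivity plus the link condition, or of convexity of $H$ once that is independently established), and your gluing argument for the carrier silently assumes it when it claims the local product decompositions ``agree'' on shared faces. The remaining steps --- the $\mathbb{Z}/2$ coloring descending over squares, local convexity of $H$ plus a local-to-global principle (or your shortening argument) for (1), and the link computation $\Lk_H(m_e)\cong\Lk_X(e)$ for (2) --- are all reasonable in outline, but they cannot be completed until the two combinatorial lemmas above are actually proved rather than asserted.
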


\begin{figure}[h]
\begin{center}
\definecolor{zzttqq}{rgb}{0.6,0.2,0.}
\begin{tikzpicture}[line cap=round,line join=round,>=triangle 45,x=1.0cm,y=1.0cm]
\clip(-1.595921374942505,-1.7779261296597524) rectangle (10.9446964130976,3.2562361538248785);
\fill[color=zzttqq,fill=zzttqq,fill opacity=0.1] (-1.,1.5) -- (3.,1.5) -- (4.,1.) -- (0.,1.) -- cycle;
\fill[color=zzttqq,fill=zzttqq,fill opacity=0.1] (7.4,0.6) -- (9.4,0.6) -- (10.4,0.2) -- (8.4,0.2) -- cycle;
\draw (0.,2.)-- (2.,2.);
\draw (2.,2.)-- (4.,2.);
\draw (4.,2.)-- (4.,0.);
\draw (2.,2.)-- (2.,0.);
\draw (4.,0.)-- (2.,0.);
\draw (0.,2.)-- (0.,0.);
\draw (0.,0.)-- (2.,0.);
\draw (-1.,2.5)-- (0.,2.);
\draw (1.,2.5)-- (2.,2.);
\draw (3.,2.5)-- (4.,2.);
\draw (1.,2.5)-- (3.,2.5);
\draw (-1.,2.5)-- (1.,2.5);
\draw (-1.,2.5)-- (-1.,0.5);
\draw (-1.,0.5)-- (0.,0.);
\draw [dash pattern=on 4pt off 4pt] (-1.,0.5)-- (1.,0.5);
\draw [dash pattern=on 4pt off 4pt] (1.,0.5)-- (1.,2.5);
\draw (1.,0.5)-- (2.,0.);
\draw [dash pattern=on 4pt off 4pt] (3.,2.5)-- (3.,0.5);
\draw [dash pattern=on 4pt off 4pt] (3.,0.5)-- (1.,0.5);
\draw [dash pattern=on 4pt off 4pt] (3.,0.5)-- (4.,0.);
\draw (4.,2.)-- (5.6,1.8);
\draw (5.6,1.8)-- (5.6,-0.2);
\draw (5.6,-0.2)-- (4.,0.);
\draw (7.4,1.6)-- (5.6,1.8);
\draw (7.4,1.6)-- (7.4,-0.4);
\draw (7.4,-0.4)-- (5.6,-0.2);
\draw (7.4,1.6)-- (9.4,1.6);
\draw [dash pattern=on 4pt off 4pt] (9.4,1.6)-- (9.4,-0.4);
\draw [dash pattern=on 4pt off 4pt] (7.4,-0.4)-- (9.4,-0.4);
\draw (8.4,1.2)-- (7.4,1.6);
\draw (9.4,1.6)-- (10.4,1.2);
\draw (10.4,1.2)-- (8.4,1.2);
\draw (8.4,1.2)-- (8.4,-0.8);
\draw (8.4,-0.8)-- (7.4,-0.4);
\draw (10.4,1.2)-- (10.4,-0.8);
\draw (10.4,-0.8)-- (8.4,-0.8);
\draw [dash pattern=on 4pt off 4pt] (9.4,-0.4)-- (10.4,-0.8);
\draw (-1.,1.5)-- (0.,1.);
\draw (0.,1.)-- (4.,1.);
\draw (-1.,1.5)-- (3.,1.5);
\draw (3.,1.5)-- (4.,1.);
\draw [color=zzttqq] (-1.,1.5)-- (3.,1.5);
\draw [color=zzttqq] (3.,1.5)-- (4.,1.);
\draw [color=zzttqq] (4.,1.)-- (0.,1.);
\draw [color=zzttqq] (0.,1.)-- (-1.,1.5);
\draw [color=zzttqq] (7.4,0.6)-- (9.4,0.6);
\draw [color=zzttqq] (9.4,0.6)-- (10.4,0.2);
\draw [color=zzttqq] (10.4,0.2)-- (8.4,0.2);
\draw [color=zzttqq] (8.4,0.2)-- (7.4,0.6);
\draw [color=zzttqq] (4.,1.)-- (7.4,0.6);
\begin{scriptsize}
\draw[color=zzttqq] (8.938197567011184,0.46146990391882) node {H};
\end{scriptsize}
\end{tikzpicture}
\caption{A hyperplane $H$ in some finite CAT(0) cube complex.}
\end{center}
\end{figure}
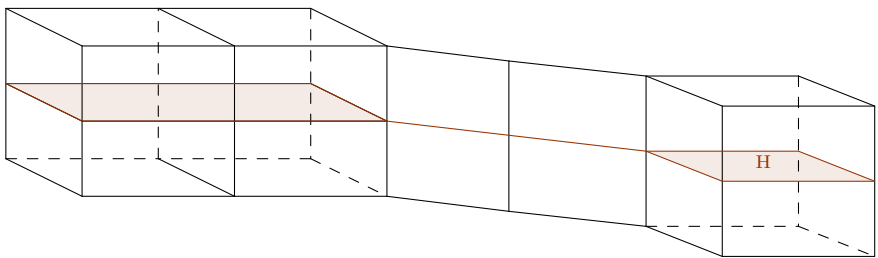

The two connected components of $X\setminus H$ are called \emph{half-spaces}\index{half-space}. Two points of $X$ are separated by $H$ if they lie in different components of $X\setminus H$. The 1-skeleton of $X$ is a connected graph and thus has a graph metric where all edges have length 1. Actually the distance between two vertices coincides with the number of hyperplanes that separate them. This metric can be extended to the whole complex in a unique way such that each cube of dimension $n$ becomes isometric to $[0,1]^n$ with the $\ell^1$-metric. This metric on $X$ is called the \emph{$\ell^1$-metric}. If $X$ has finite dimension then the CAT(0)-metric and the $\ell^1$-metric are quasi-isometric (see \cite[Section 1]{MR2490224}).

The non-positive curvature of CAT(0) cube complexes is very important but the combinatorics of cubes is also very important and sometimes the CAT(0)-metric forgets about the combinatorics. Let us illustrate that with a simple example. The Euclidean space $\R^n$ has a natural cube complex structure where the vertices are points with integral coordinates. The isometry group does not at all preserve this structure but if one endow $\R^n$ with the $\ell^1$-metric, the space becomes much more rigid and the linear part of the isometry group of the $\ell^1$-metric preserves the cubical structure.

Let us give another example. Salajan exhibited some elements of the Thompson group $F$ acting via parabolic isometries (for the CAT(0) metric) \cite[Proposition 2.4.1]{Salajan:2012yu} on the CAT(0) cube complex constructed by Farley. On the other hand, Haglund proved that the isometries of CAT(0) cube complexes are semi-simple in a combinatorial sense (up to considering the barycentric subdivision) \cite[Theorem 1.4]{Haglund:2007fk}. Observe that the existence of parabolic elements may appear only in infinite dimensions. In finite dimensions, there are only finitely many types of simplices and in that case, all isometries are semi-simple \cite{MR1744486}.

\begin{definition}Let $V$ be a set. A \emph{wall}\index{wall} $W$ is a partition of $V$ its two non-empty subsets $W_-,W_+$. Two points $u,v\in V$ are \emph{separated} by a wall $W$ if $u,v$ does not belong to the same subset $W_-$ or $W_+$. The two subsets $W_+$ and $W_-$ are called half-spaces associated to the wall $W$.   A \emph{space with walls}\index{wall!space with}\index{space!with walls} is a set $V$ with a collection of \emph{walls} where any two points are separated by a finite positive number of walls. 
\end{definition}

If $V$ is a space with walls then one has a metric associated to the collection of walls. The distance between $u,v\in V$ is the number of walls separating them. There is a duality between CAT(0) cube complexes and spaces with walls. Let $X$ be a CAT(0) cube complex and let $V$ be the set of vertices of $X$. Any hyperplane $H$ induces a wall on $V$ since any vertex is exactly in one component of $X\setminus H$. Thus the collection of hyperplanes gives the structure of a space with walls on $V$. Observe that a vertex $v$ of $X$ gives a choice of a half-space for any hyperplane: simply choose the one containing $v$ !

Conversely, if $V$ is a space with walls, an \emph{ultrafilter}\index{ultrafilter} on the set of walls $\mathcal{W}$ is a consistent choice of a half-space for any wall. More precisely, an ultrafilter $\alpha$ is a map from $\mathcal{W}$ to the set of half-spaces such that for any $W\in\mathcal{W}$, $\alpha(W)=W_\pm$ and if $W,W'\in\mathcal{W}$ and $W_+\subset W_+'$ then $\alpha(W)=W_+\implies \alpha(W')=W_+'$. An ultrafilter $\alpha$ satisfies the \emph{Descending Chain Condition}\index{descending chain condition} if any non-increasing (with respect to inclusion) chain of half-spaces in the image of $\alpha$ is eventually constant. As before, any point $v$ yields an ultrafilter  by choosing the hyperplane containing $v$. This ultrafilter satisfies the descending chain condition.

Now, if $V$ is a space with walls then one can define a CAT(0) cube complex whose vertex set is the set of ultrafilters that satisfies the descending chain condition. Two vertices are joined by an edge if the corresponding ultrafilters give a different choice for exactly one wall. We just constructed the 0 and 1-skeletons. The higher dimensional cubes are constructed inductively: If one see a complex isomorphic to the $n-1$ skeleton of an $n$-cube, one simply fills it with an $n$-cube. For more details, see \cite{MR2197811,Roller} or the original construction \cite{MR1347406}.

This duality between CAT(0) cube complexes and spaces with walls is actually equivariant: If a group acts on one of the structures it acts also on the other. See for example \cite[Theorem 3]{MR2197811}.

The process of constructing a CAT(0) cube complex from a space with walls (or more generally from a pocset \cite{Roller}) is called \emph{cubulation}\index{cubulation}.  The construction of a CAT(0) cube complex on which a group acts nicely (e.g. properly, cocompactly or both) is also called a cubulation of the group. For a nice survey about cubulated groups and their properties, we recommend \cite{MR3187627}. \begin{examples}Let us list  a few groups that have been cubulated:
\begin{itemize}
\item Right angle Artin groups \cite{MR2322545},
\item Coxeter groups \cite{MR1983376},
\item Small cancellation groups \cite{MR2053602}.
\end{itemize}
\end{examples}

We will see in Section \ref{rigidity} some very strong rigidity results for symmetric spaces and Euclidean buildings. It is natural to ask wether similar phenomena happen in the world of CAT(0) cube complexes. 

The Roller boundary\index{Roller boundary}\index{boundary!Roller} of a CAT(0) cube complex $X$  is defined in the following way. Let $\mathcal{H}$ be the set of hyperplanes of $X$. Any hyperplane defined two half-spaces $H_+$ and $H_-$. Any vertex of $X$ can be identified with a point in $\prod_{H\in \mathcal{H}} \{H_+,H_-\}$. Let us denote by $\overline X^R$ the closure of the image of the vertex set $X^0$ in this product. The \emph{Roller boundary}  $\bo_R X$ is $\overline X^R\setminus X^0$. 
\begin{question} Are there suitable hypotheses for rigidity in the world of CAT(0) cube complexes?  First there is no similar statement to Mostow rigidity (see Section \ref{super}). Actually one can cubulate surface groups in different ways. Thus one may construct quasi-isometric cube complexes with a proper and cocompact action of the same group such that the complexes have different Roller boundaries.
Nonetheless, one can ask how (under suitable hypotheses), the Roller boundary determines the complex.
\end{question}

\begin{question} Trees with edges of length 1 have a continuous analog: real trees. Since trees are CAT(0) cube complexes of dimension 1, it is natural to ask if there are analogs of real trees. Is there a good notion of \emph{real} CAT(0) cube complexes? For example, one can construct a CAT(0) cube complex from a space with walls. There is a more general notion of space with measured walls (see references in Section \ref{Haagerup}). Can one construct a CAT(0) space associated to a space with measured walls in an equivariant way? The natural generalisations of trees (working well with spaces with measured walls) are \emph{median spaces}\index{median space}. They are not CAT(0) in general. 
\end{question}
 
\subsection{Cubulating hyperbolic 3-manifold groups}
Let us describe a bit a recent and very fruitful cubulation of certain groups. A \emph{ hyperbolic 3-dimensional group} is the fundamental group of a closed 3-dimensional manifold which admits a hyperbolic metric, that is, a Riemannian metric of sectional curvature -1. In particular, such a group acts properly cocompactly on the hyperbolic space of dimension 3. So the group is quasi-isometric to this hyperbolic space and thus hyperbolic in the sense of Gromov (see e.g. \cite[III.2]{MR1744486} or \cite{MR1086648}).

A combination of results of Bergeron-Wise \cite{MR2931226} and Kahn-Markovich \cite{MR2912704} implies a cubulation of hyperbolic 3-dimensional groups.
 
\begin{theorem}\label{KM}Every hyperbolic 3-manifold group acts properly and cocompactly on a CAT(0) cube complex.
\end{theorem}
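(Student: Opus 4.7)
The plan is to combine the two ingredients flagged in the statement: Kahn--Markovich supplies a rich family of immersed closed quasi-Fuchsian surfaces in $M$, and Bergeron--Wise converts such a family into a proper cocompact action on a CAT(0) cube complex via Sageev's construction. So let $M$ be a closed hyperbolic $3$-manifold with $\Gamma=\pi_1(M)$ acting properly cocompactly by isometries on $\mathbb{H}^3$.

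First I would apply the Kahn--Markovich surface subgroup theorem: for every pair of distinct points $\xi,\eta\in\partial\mathbb{H}^3$ there is an immersed closed quasi-Fuchsian surface in $M$ whose limit circle in $\partial\mathbb{H}^3$ separates $\xi$ from $\eta$. Each such surface gives a quasi-convex, codimension-$1$ subgroup $H\leq\Gamma$: the preimage of the surface in $\mathbb{H}^3$ is a disjoint union of quasi-planes, and each quasi-plane cuts $\mathbb{H}^3$ into two deep components. The cosets $\gamma H$, with their associated quasi-planes, are the walls of our wall structure on $\Gamma$ (or equivalently on $\mathbb{H}^3$, restricted to any orbit).

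Next, I would feed this wall structure into Sageev's cubulation machine discussed in the CAT(0) cube complex section: the dual cube complex $X$ has one vertex per ultrafilter satisfying the descending chain condition, with cubes assembled as in the duality explained above. The action $\Gamma\curvearrowright X$ is automatic from the equivariance of the wall structure. Properness of the action follows from Kahn--Markovich density: given a compact set, one shows that only finitely many group elements can keep it within bounded distance, because any two far-apart $\Gamma$-orbit points are separated by arbitrarily many walls (this is the standard argument that "enough walls $\Rightarrow$ proper action" on Sageev's dual complex).

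The hard part, and where Bergeron--Wise does the real work, is cocompactness. The issue is that although we have enormously many quasi-Fuchsian subgroups, we need to extract finitely many $\Gamma$-conjugacy classes $H_1,\dots,H_k$ of quasi-convex codimension-$1$ subgroups whose associated walls \emph{already} separate every pair of distinct boundary points. Bergeron--Wise show that a compactness argument on the space of pairs of points in $\partial\mathbb{H}^3$, combined with Kahn--Markovich's density statement and the openness of the "separation" condition for quasi-convex walls, allows finitely many conjugacy classes to suffice. With only finitely many $\Gamma$-orbits of walls, Sageev's construction yields a locally finite, finite-dimensional cube complex, and the Bergeron--Wise cocompactness criterion for hyperbolic groups (using that each $H_i$ is quasi-convex in the hyperbolic group $\Gamma$) upgrades the action to a proper \emph{and} cocompact one on $X$. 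This concludes the theorem.
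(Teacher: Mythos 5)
Your proposal is correct and follows exactly the route the paper has in mind: the paper offers no proof of its own beyond citing that the theorem is ``a combination of results of Bergeron--Wise and Kahn--Markovich,'' and your outline --- Kahn--Markovic surfaces giving quasi-convex codimension-one subgroups whose limit circles separate all pairs of boundary points, fed into the Bergeron--Wise/Sageev cubulation criterion to get properness and cocompactness from finitely many conjugacy classes of walls --- is precisely that combination. No discrepancy to report.
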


After Perelman's proof of Thurston geometrization conjecture, one of the remaining conjectures was the virtual Haken conjecture (stated by Waldhausen): \textit{Any closed and aspherical 3-dimensional manifold has a finite cover which is Haken}. A manifold $M$ is \emph{Haken} if it contains an embedded surface $S$ such that the inclusion $S\hookrightarrow M$ induces an embedding between fundamental groups $\Pi_1(S)\hookrightarrow\Pi_1(M)$.

Building on Theorem \ref{KM} and the theory of special cube complexes \cite{MR2377497}, Agol was able to show that any hyperbolic group that acts properly cocompactly on a CAT(0) cube complex is virtually special. Eventually, he proved the virtual Haken conjecture. For details, see \cite{MR2986461}.
\begin{theorem}[{\cite{MR3104553}}]Every hyperbolic 3-manifold is virtually Haken.
\end{theorem}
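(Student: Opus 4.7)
The plan is to assemble three ingredients already made explicit in the excerpt: the cubulation of $\pi_1(M)$ given by Theorem~\ref{KM}, Agol's virtually-special theorem, and the classical separability criterion of Scott which promotes a $\pi_1$-injective immersed surface to an embedded one in some finite cover.

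First I would let $M$ be a closed hyperbolic $3$-manifold, so $\Gamma := \pi_1(M)$ is a word-hyperbolic group acting geometrically on $\mathbb{H}^3$. Applying Theorem~\ref{KM} produces a proper cocompact action of $\Gamma$ on a CAT(0) cube complex $X$. The Bergeron--Wise cubulation is built by walling $\mathbb{H}^3$ along the lifts of the closed $\pi_1$-injective quasi-Fuchsian surfaces supplied by Kahn--Markovic, so each hyperplane of $X$ is the lift of an essential immersed closed surface $f\colon S \looparrowright M$, and its stabiliser in $\Gamma$ is a quasi-convex subgroup isomorphic to $\pi_1(S)$.

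Next I would invoke Agol's theorem (as stated in the excerpt) to pass to a finite-index subgroup $\Gamma_0 \leq \Gamma$ such that the quotient $\Gamma_0\backslash X$ is a special cube complex in the sense of Haglund--Wise. By the Haglund--Wise theory (local isometric embedding into the Salvetti complex of a right-angled Artin group, together with the canonical completion construction), every quasi-convex subgroup of $\Gamma_0$ is then separable in $\Gamma_0$. Separability is inherited by finite overgroups, so quasi-convex subgroups of $\Gamma$ itself are separable in $\Gamma$. I would then fix an immersed essential surface $f \colon S \looparrowright M$ coming from a hyperplane of $X$; its image $f_*\pi_1(S)$ is quasi-convex in $\Gamma$, hence separable. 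By Scott's criterion, namely that a $\pi_1$-injective immersion of a closed surface in a $3$-manifold lifts to an embedding in some finite cover iff the surface subgroup is separable, there is a finite cover $\widetilde M \to M$ into which $f$ lifts as an embedding $S \hookrightarrow \widetilde M$. This embedded incompressible surface exhibits $\widetilde M$ as Haken, so $M$ is virtually Haken.

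The hard part is Agol's step: passing from ``cubulated'' to ``virtually special'' for a hyperbolic group rests on his weak-separation criterion and a delicate inductive argument along a quasi-convex hierarchy of the special cube complex, and is by far the deepest input. The initial cubulation (Theorem~\ref{KM}) is similarly deep but is being treated here as a black box, and the concluding separability-to-embedding step is classical three-manifold topology once quasi-convex surface subgroups have been shown to be separable.
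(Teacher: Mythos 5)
Your proposal is correct and follows the same route the paper itself sketches: cubulate $\pi_1(M)$ via Kahn--Markovic and Bergeron--Wise (Theorem~\ref{KM}), apply Agol's virtually-special theorem, and then use separability of quasi-convex surface subgroups together with Scott's criterion to embed a surface in a finite cover. The paper only outlines these ingredients and defers to the references, so your added detail on the Haglund--Wise separability step and the passage from separability to an embedded surface is a faithful expansion of the same argument rather than a different one.
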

\section{Rigidity properties of the above examples}\label{rigidity}
\subsection{Bieberbach theorems and the solvable subgroup theorem}
We aim to show in that section how the above examples are rigid. Let us start with the simplest example given by Euclidean spaces. A group acting discretely cocompactly by isometries on $\R^n$ is called a \emph{crystallographic group}\index{crystallographic group}\index{group!crystallographic} of $\R^n$.
\begin{theorem}[Bieberbach theorems]\label{Bieberbach}\index{Bieberbach theorem}\index{theorem!Bieberbach} Any crystallographic group of  $\R^n$ contains an Abelian group $\Z^n$ of finite index. This Abelian subgroup is generated by $n$ linearly independent translations.

There are finitely many isomorphism classes of crystallographic groups of  $\R^n$.
\end{theorem}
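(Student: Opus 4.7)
My approach is to analyse $\Gamma$ through the split exact sequence
$$1\to \R^n\to\Isom(\R^n)\xrightarrow{\rho}\Or(n)\to 1,$$
introducing the \emph{translation subgroup} $T=\Gamma\cap\R^n$ and the \emph{point group} $L=\rho(\Gamma)\leq\Or(n)$. Discreteness of $\Gamma$ forces $T$ to be a discrete subgroup of $\R^n$, hence $T\cong\Z^k$ is spanned by $k$ linearly independent translations for some $k\leq n$. The first theorem then reduces to two assertions: $L$ is finite (so that $T$ has finite index in $\Gamma$), and $k=n$. The latter is immediate once the former holds, since cocompactness of $\Gamma$ on $\R^n$ and finiteness of $[\Gamma:T]$ imply that $T\backslash\R^n$ is compact, forcing $T$ to be a full-rank lattice.

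The heart of the proof is therefore Bieberbach's commutator trick to show that $L$ is finite. For $g=(A,v)$ and $h=(B,w)$ in $\Isom(\R^n)$, the commutator $[g,h]$ has linear part $ABA^{-1}B^{-1}$, and one has the elementary estimate $\|ABA^{-1}B^{-1}-\Id\|\leq 2\|A-\Id\|\cdot\|B-\Id\|$ in operator norm, so the displacement of the linear part shrinks \emph{quadratically}. If $L$ were infinite, by compactness of $\Or(n)$ one could find distinct elements of $\Gamma$ whose linear parts are arbitrarily close to each other, and then, after passing to products $gh^{-1}$, arbitrarily close to $\Id$. Iterating the commutator estimate produces a sequence $\gamma_n\in\Gamma$ with $\rho(\gamma_n)\to\Id$ and $\rho(\gamma_n)\neq\Id$. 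The delicate point, and what I expect to be the main obstacle, is to control the translation parts of these $\gamma_n$ so that discreteness of $\Gamma$ can be invoked: one does this by multiplying each $\gamma_n$ on the right by a suitable element of $T$, so the translation part lies in a fixed fundamental domain for $T$ (at least after we know $T$ is a full-rank lattice, which one bootstraps, or by a direct bounded-translation argument using cocompactness). A subsequence then converges to an element of $\R^n\subset\Isom(\R^n)$, which by discreteness must eventually be constant, forcing $\rho(\gamma_n)=\Id$ for large $n$, a contradiction.

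For the second theorem, I would parametrise each crystallographic group by the isomorphism class of the triple $(L,T,[\sigma])$, where $L$ is the (finite) point group, $T\cong\Z^n$ is the $\Z[L]$-module of translations, and $[\sigma]\in H^2(L;T)$ classifies the extension $1\to T\to\Gamma\to L\to 1$ coming from the first theorem. A classical theorem of Jordan together with Minkowski's finiteness statement provides only finitely many conjugacy classes of finite subgroups of $\GL_n(\Z)$; since any finite $L\leq\Or(n)$ preserving a lattice is conjugate in $\GL_n(\R)$ to a subgroup of $\GL_n(\Z)$, only finitely many pairs $(L,T)$ arise up to isomorphism of $\Z[L]$-modules. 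For each such pair, $H^2(L;T)$ is a finite abelian group (as $L$ is finite and $T$ is finitely generated), yielding finitely many extension classes and hence finitely many isomorphism types of crystallographic groups of $\R^n$. The only non-routine input beyond the first theorem is the Jordan–Minkowski finiteness for finite subgroups of $\GL_n(\Z)$, which I would quote.
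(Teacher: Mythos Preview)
The paper does not prove this theorem. It is a survey article, and the Bieberbach theorems are stated with references to \cite{MR0185012,MR798909} and \cite[\S 4.2]{MR1435975}; there is no proof in the paper to compare against.

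That said, your proposal follows the standard modern route and is broadly correct, with one caveat.

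For the first theorem, the ``delicate point'' you flag is genuinely the crux, and your handling of it is circular as written: reducing the translation part modulo $T$ requires $T$ to be a full-rank lattice, which you only obtain \emph{after} knowing $L$ is finite. Your fallback ``bounded-translation argument using cocompactness'' does not directly rescue this, because cocompactness supplies elements of $\Gamma$, not of $T$, to translate by, and those would disturb the rotation parts you are trying to send to $\Id$. The standard way out (e.g.\ Buser's geometric proof, or the treatments in Thurston or Ratcliffe) is to organise the commutator iteration more carefully: for $g=(A,v)\in\Gamma$ with $\|A-\Id\|$ small and a fixed $h=(B,w)\in\Gamma$, one checks that the translation part of $[g,h]$ is controlled by $\|A-\Id\|\cdot\|w\|+\|B-\Id\|\cdot\|v\|$ plus lower-order terms, so that the iterated commutators converge in \emph{both} components. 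Discreteness then yields pure translations directly, and one proves that $T$ spans $\R^n$ \emph{before} (or simultaneously with) concluding that $L$ is finite, rather than after.

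For the second theorem, your cohomological packaging via $(L,T,[\sigma]\in H^2(L;T))$ together with Jordan--Minkowski finiteness for finite subgroups of $\GL_n(\Z)$ is exactly the Zassenhaus approach and is correct.
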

Accounts on those theorems may be found in \cite{MR0185012,MR798909} and \cite[\S 4.2]{MR1435975}. Theorem \ref{Bieberbach} implies the existence of a free Abelian subgroup acting properly by semi-simple isometries. One has the following converse. 

\begin{theorem}[Flat torus theorem]\index{flat torus theorem}\index{theorem!flat torus} Let $A$ be a free Abelian group of rank $n$ acting properly by semi-simple isometries on a CAT(0) space $X$. Then there is a closed convex subspace $Y\subset X$ which is $A$-invariant and splits as a product $Z\times\R^n$. The action of $A$ on $Y$ is diagonal, being trivial on $Z$ and by translations on $\R^n$. 
\end{theorem}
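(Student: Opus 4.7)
The plan is to prove this by induction on $n$, peeling off one $\R$-factor at each stage. At the $k$-th stage we will have produced a closed convex $A$-invariant subspace $Y_k\subset X$ splitting isometrically as $Z_k\times\R^k$, on which $a_1,\dots,a_k$ act trivially on $Z_k$ and by linearly independent translations $v_1,\dots,v_k$ on $\R^k$. The two workhorses are the axis theorem for a single semi-simple isometry of positive translation length (its minimum set $\mathrm{Min}(a)$ splits isometrically as $Z\times\R$, with $a$ translating along the $\R$-factor), together with the elementary fact that any isometry commuting with $a$ preserves $\mathrm{Min}(a)$, since $d(bx,abx)=d(x,ax)$ whenever $ba=ab$.

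For the base case $n=1$, pick a generator $a$ of $A$. Properness of the action and infiniteness of $A$ rule out any fixed point of $a$ (otherwise the infinite cyclic $\langle a\rangle$ would sit inside a finite stabiliser), so the translation length of $a$ is positive and the axis theorem supplies the required $Y_1=Z_1\times\R$.

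For the inductive step, consider $a_{k+1}$. Since $a_{k+1}$ commutes with $a_1,\dots,a_k$ it preserves $Y_k=\bigcap_{i\le k}\mathrm{Min}(a_i)$, and in fact preserves its splitting: on the $\R^k$-factor, $a_{k+1}$ commutes with the translations $v_1,\dots,v_k$, which span $\R^k$, and in $\Isom(\R^k)=\Or(k)\ltimes\R^k$ the centraliser of a spanning set of translations is exactly the translation subgroup. Thus $a_{k+1}=(b,\tau)$ with $b\in\Isom(Z_k)$ and $\tau$ a translation of $\R^k$. A projection argument shows that $b$ is semi-simple on $Z_k$: choose $x_0\in X$ realising the translation length $\ell$ of $a_{k+1}$; since $Y_k$ is closed, convex and $a_{k+1}$-invariant, the nearest-point projection onto $Y_k$ is $a_{k+1}$-equivariant and $1$-Lipschitz, so with $p=p_{Y_k}(x_0)$ one has $d(p,a_{k+1}p)\le d(x_0,a_{k+1}x_0)=\ell$. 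Since $p\in Y_k$, equality must hold; reading it through the splitting means $b$ attains its infimum on $Z_k$.

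The main obstacle is to rule out that $b$ has translation length zero, and this is where properness enters essentially. If $b$ had a fixed point $z_0\in Z_k$, the flat $\{z_0\}\times\R^k\subset X$ would be preserved by all of $A$, which would act on it purely by the translations $v_1,\dots,v_k,\tau$. The restricted action inherits properness from $X$, and torsion-freeness of $A$ combined with properness makes the induced homomorphism $A\to\R^k$ injective, yielding an embedding of $\Z^n$ as a discrete subgroup of $\R^k$; this is impossible because $k<n$. Hence $b$ has positive translation length, and the axis theorem applied to $b$ on $Z_k$ supplies $\mathrm{Min}(b)=Z_{k+1}\times\R$, with $b$ translating by some nonzero vector $w$ along the new $\R$-factor. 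Setting $Y_{k+1}:=\mathrm{Min}(b)\times\R^k=Z_{k+1}\times\R^{k+1}$ yields the refined splitting: $a_1,\dots,a_k$ remain trivial on $Z_{k+1}$ and translate by $(0,v_i)\in\R\times\R^k$, while $a_{k+1}$ is trivial on $Z_{k+1}$ and translates by $(w,\tau)$, giving $k+1$ linearly independent translation vectors. After $n$ iterations, $Y=Y_n$ is the desired product decomposition.
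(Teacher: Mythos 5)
The paper states the Flat Torus Theorem without proof, citing it as a classical result, so there is no in-paper argument to compare against; your proposal is essentially the standard inductive proof (Bridson--Haefliger, Theorem II.7.1), and its overall architecture --- axis theorem for one generator, projection to show the $Z_k$-component stays semi-simple, properness to exclude a fixed point, iterate --- is sound. Two steps need repair. First, when you say that ``on the $\R^k$-factor, $a_{k+1}$ commutes with the translations $v_1,\dots,v_k$'' you are already assuming that $a_{k+1}|_{Y_k}$ decomposes as a product isometry $(b,c)$ with $c\in\Isom(\R^k)$; a general isometry of a product need not split, so the centraliser computation cannot be the reason the splitting is preserved --- it presupposes it. The missing step is standard but must be said: since $v_1,\dots,v_k$ span $\R^k$, each fibre $\{z\}\times\R^k$ is the closed convex hull of the $\langle a_1,\dots,a_k\rangle$-orbit of any of its points, hence the fibres are permuted by every isometry commuting with $a_1,\dots,a_k$; and an isometry of $Z_k\times\R^k$ permuting the $\R^k$-fibres automatically splits as a product (compare distances within a fibre and between fibres). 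Only after that does your centraliser argument force the $\R^k$-component to be a translation.

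Second, in excluding a fixed point of $b$ you claim the flat $\{z_0\}\times\R^k$ ``would be preserved by all of $A$'' and extract a discrete copy of $\Z^n$ in $\R^k$. Nothing in the argument so far controls how $a_{k+2},\dots,a_n$ act on $Y_k$, so preservation by all of $A$ is unjustified. Fortunately it is also unnecessary: $\langle a_1,\dots,a_{k+1}\rangle\cong\Z^{k+1}$ does preserve that flat and acts on it by translations, properness passes to the invariant subspace, and torsion-freeness makes the homomorphism to $\R^k$ injective with discrete image; a discrete subgroup of $\R^k$ has rank at most $k<k+1$, which is already the contradiction. With these two repairs the induction closes as you describe: $Y_{k+1}=\mathrm{Min}_{Z_k}(b)\times\R^k$ coincides with $\bigcap_{i\le k+1}\mathrm{Min}(a_i)$, hence is preserved by the remaining generators, and the new translation vectors $(0,v_1),\dots,(0,v_k),(w,\tau)$ are independent because $w\neq 0$.
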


From this first rigidity result, one can show that solvable subgroups of groups acting geometrically on CAT(0) spaces are very restricted. A group has \emph{virtually}\index{virtually} some property if there is a subgroup of finite index which has the property.

\begin{theorem}[Solvable subgroup theorem]\index{solvable subgroup theorem}\index{theorem!solvable subgroup} Let $\Gamma$ be CAT(0) group. Then any solvable subgroup of $\Gamma$ is virtually an Abelian free group of finite rank.
\end{theorem}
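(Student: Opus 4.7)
The approach is by induction on the derived length of the solvable subgroup $H\leq \Gamma$, with the Flat Torus Theorem as the engine. Fix a proper CAT(0) space $X$ on which $\Gamma$ acts geometrically.

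First I would record two preliminary observations about such an action. (a) Every element $g\in\Gamma$ is a semi-simple isometry of $X$: because the action is cocompact and proper, any minimising sequence for the displacement function $x\mapsto d(gx,x)$ can be $\Gamma$-translated into a compact fundamental domain and a convergent subsequence extracted, producing an actual minimum. (b) The supremum $r$ of dimensions of flats in $X$ is finite: otherwise, by cocompactness, one could translate flats of arbitrarily large dimension through a fixed bounded set and extract a Gromov--Hausdorff limit to produce a locally Euclidean region of infinite dimension, contradicting local compactness of $X$.

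For the base case, assume $H$ is abelian. Every finitely generated torsion-free subgroup $H_0\leq H$ then acts properly by semi-simple isometries, so by the Flat Torus Theorem it stabilises a flat of dimension equal to $\operatorname{rank}(H_0)$; hence $\operatorname{rank}(H_0)\leq r$. Combined with the Bieberbach-type fact that torsion in a discrete subgroup of $\Gamma$ is bounded (by properness, stabilisers of points are finite of uniformly bounded order), this forces $H$ itself to be finitely generated, hence virtually $\Z^k$ with $k\leq r$.

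For the inductive step, let $H$ have derived length $n+1$. The commutator subgroup $[H,H]$ has derived length $n$, so by the inductive hypothesis it contains a finite-index subgroup $A\cong\Z^k$; intersecting the finitely many $H$-conjugates of $A$, one may assume $A\triangleleft H$. Apply the Flat Torus Theorem to $A$: the minset $Y=\operatorname{Min}(A)$ splits canonically as $Z\times\R^k$ with $A$ acting trivially on $Z$ and by a lattice of translations on $\R^k$. Normality of $A$ in $H$ implies that $H$ preserves both $Y$ and its canonical splitting, giving a homomorphism $\rho\colon H\to\Isom(Z)\times\Isom(\R^k)$. The composition with the linear-part map $\Isom(\R^k)\to\Or(k)$ has discrete image (because $\Gamma$ acts properly on $X$) inside the compact group $\Or(k)$, hence finite image; a finite-index subgroup $H_0\leq H$ therefore acts on the Euclidean factor purely by translations, so $H_0/C$ is abelian, where $C$ is the kernel of the restriction to $\R^k$. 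I would then argue that $C$ acts properly, semi-simply and by solvable isometries on $Z$ (a proper CAT(0) space of strictly smaller flat rank than $X$, since $Z\times\R^k$ embeds in $X$), and invoke a secondary induction on $r$ to conclude that $C$ itself is virtually $\Z^{m-k}$. Combining this with the abelian action on $\R^k$ shows $H_0$ is virtually $\Z^m$ with $m\leq r$.

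The main obstacle is precisely the inductive step: transferring the control given by the Flat Torus Theorem from the normal subgroup $A$ to the ambient group $H$. The delicate points are (i) showing the splitting of $Y$ is canonical enough to be $H$-invariant, not merely $A$-invariant (which uses that the $\R^k$-factor is characterised metrically as the parallel set of any axis of a generic element of $A$), and (ii) running the secondary induction on flat-rank so that the residual solvable action on $Z$ is strictly simpler than the one we started with. Both points require some care, but once they are in place the two inductions close.
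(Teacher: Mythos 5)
The paper does not actually prove this theorem; it is stated as a known consequence of the Flat Torus Theorem (the standard reference is \cite[Theorem II.7.8]{MR1744486}), so your argument can only be judged on its own terms. Your architecture --- induction on derived length, the Flat Torus Theorem applied to a normal free abelian subgroup $A$, the induced splitting of $\operatorname{Min}(A)$ and the homomorphism to $\Isom(\R^k)$ with finite image in $\Or(k)$ --- is indeed the standard route. But three steps fail or are missing as written.

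First, the base case: ``every finitely generated torsion-free subgroup has rank $\leq r$'' together with bounded torsion does \emph{not} force an abelian group to be finitely generated ($\Q$ and $\Z[1/2]$ are torsion-free of rank $1$ and not finitely generated). The missing ingredient is the uniform positive lower bound on translation lengths of infinite-order elements of $\Gamma$, a consequence of properness and cocompactness; it makes the image of $H$ in the translation group of the Euclidean factor of $\operatorname{Min}(H)$ a \emph{discrete} subgroup of $\R^k$, hence free abelian of finite rank. Second, your secondary induction applies the theorem to $C$ acting on $Z$, but that action is proper and semi-simple yet in general \emph{not} cocompact, so the theorem's hypotheses are not available there; you must induct on a strengthened statement (proper action by semi-simple isometries with translation lengths bounded below and flats of bounded dimension) and check these hypotheses pass to $C\curvearrowright Z$ --- they do, via the projection onto the invariant convex subspace, but this is precisely what needs proving, and the loss of cocompactness is the real obstacle, not the canonicity of the splitting. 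Third, the recombination at the end is a genuine leap: $C$ virtually $\Z^{m-k}$ and $H_0/C$ abelian only make $H_0$ virtually polycyclic, and groups such as $\Z^2\rtimes_\phi\Z$ with $\phi$ of infinite order are polycyclic but not virtually abelian. One needs the further observation that conjugation by $H_0$ on a free abelian finite-index subgroup of $C$ preserves translation lengths, which form a discrete proper function on that lattice, so the conjugation action has finite image; this translation-length argument is the recurring engine of the actual proof and is absent from your outline. (A smaller point: if $[H,H]$ is finite and nontrivial your $A$ is trivial and the Flat Torus step gives nothing; the usual remedy is to induct using the \emph{last} nontrivial term of the derived series rather than the first.)
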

\subsection{Superrigidity}\label{super}

In this section, we are interested in rigidity of lattices (discrete subgroups of finite covolume) of semi-simple algebraic groups over local fields. Recall that a local field is a topological field which is locally compact. There is a short list of such fields: finite fields $\mathbb{F}_q$, $\R$, $\C$, finite extensions of $\mathbb{Q}_p$ ($p$-adic numbers) and finite extension of $\mathbb{F}_q((T))$ (Laurent series over a finite field).

The idea here under the term \emph{rigidity}\index{rigidity} is the fact that a lattice determines the group where it lives. Thanks to the dictionary between semi-simple algebraic groups over local fields and their symmetric spaces or Euclidean buildings, we state all results using the action on the associated CAT(0) space.

\begin{theorem}[{Mostow Strong Rigidity \cite{MR0385004}}]\label{Mostow}Let $X,Y$ be two symmetric spaces and $\Gamma$ be a group acting freely cocompactly by isometries on both $X,Y$ via two homomorphisms $\alpha\colon\Gamma\to\Isom(X)$ and $\beta\colon\gamma\to\Isom(Y)$. If $X$ has no factor isometric to $\mathbb{H}^2$ then there is an isomorphism $\varphi\colon\Isom(X)\simeq\Isom(Y)$ such that $\beta=\varphi\circ\alpha$.
\end{theorem}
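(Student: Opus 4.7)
The plan is to convert the equivariant data coming from $\Gamma$ into a $\Gamma$-equivariant quasi-isometry $f\colon X\to Y$, extend $f$ to the boundary at infinity, and then exploit the rigidity of $\partial X$ (which, as emphasised in the preceding sections, carries a spherical building structure in higher rank and a conformal sphere structure in rank one) to upgrade the boundary map to a genuine isometry.

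First I would produce the quasi-isometry. Because $\alpha$ and $\beta$ are proper cocompact and free, the \v{S}varc--Milnor lemma says that $\Gamma$ with any word metric is quasi-isometric to $X$ and to $Y$. Choosing basepoints $x_0\in X$, $y_0\in Y$ and setting $f(\alpha(\gamma)x_0)=\beta(\gamma)y_0$, then extending coarsely by using cocompactness to fill in the complement of the orbit, one obtains a $\Gamma$-equivariant quasi-isometry $f\colon X\to Y$. Since $X$ and $Y$ are proper CAT(0) spaces, $f$ induces a $\Gamma$-equivariant homeomorphism $\partial f\colon\partial X\to\partial Y$ between visual boundaries; moreover, Morse-type stability of geodesics in a symmetric space of non-compact type gives control of $\partial f$ with respect to the Tits metric.

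Next I would split the problem using the de Rham decomposition of $X$ into irreducible factors and handle each factor according to its rank. For an irreducible factor of higher rank, the boundary $\partial X_i$ with the Tits metric is a thick spherical building (as recalled in the discussion of Euclidean buildings and their symmetric-space analogues). One shows that a $\Gamma$-equivariant homeomorphism commuting with the (highly transitive) boundary action must send apartments to apartments, using that $\Gamma$ has full limit set by cocompactness; this makes $\partial f$ an isomorphism of spherical buildings, and Tits' fundamental theorem that an isomorphism between thick irreducible spherical buildings of rank $\ge 2$ comes from an isomorphism of the underlying algebraic groups produces the desired $\varphi$ on that factor. For an irreducible factor of rank one, $\partial X_i$ is a sphere with a natural conformal (Carnot--Carath\'eodory in the non-real cases) structure, and one checks directly from the Morse lemma that $\partial f$ is quasi-conformal; then the ergodicity of the $\Gamma$-action on $\partial X_i\times\partial X_i$ together with Gehring-type a.e.\ differentiability of quasi-conformal maps forces $\partial f$ to be conformal almost everywhere, hence everywhere, and therefore induced by an isometry $\Isom(X_i)\to\Isom(Y_i)$. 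Assembling these factorwise isomorphisms and dealing with possible permutations of isometric factors yields the global $\varphi$.

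Finally I would check that $\beta=\varphi\circ\alpha$: this is forced by the construction since $\partial f$ is $\Gamma$-equivariant and an isometry of a symmetric space is determined by its action on $\partial X$ (a consequence of the faithful action of $\Isom^\circ(X)$ on the Furstenberg boundary). The hypothesis that $X$ has no $\mathbb{H}^2$ factor is used precisely in the rank-one step: on $\partial\mathbb{H}^2=S^1$ every homeomorphism is trivially ``quasi-conformal'', so the ergodicity/differentiation argument collapses and Teichm\"uller space provides genuine non-isometric deformations. The main obstacle is the rank-one step, which requires the delicate measure-theoretic passage from \emph{quasi-conformal} to \emph{conformal} via ergodicity of the $\Gamma$-action on $\partial X_i\times\partial X_i$, a.e.\ differentiability of quasi-conformal mappings in dimension $\ge 2$, and a regularity bootstrap; the higher-rank step is, by comparison, combinatorial, relying only on the recognition of spherical buildings at infinity.
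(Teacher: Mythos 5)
The paper itself does not prove this theorem: it states it with a citation to Mostow's monograph and only explains, via Teichm\"uller theory, why the $\mathbb{H}^2$ factor must be excluded. So there is no in-paper argument to compare against; what you have written is an outline of Mostow's original strategy (equivariant quasi-isometry from the \v{S}varc--Milnor lemma, boundary extension, Tits buildings in higher rank, quasi-conformal analysis and ergodicity on $\partial X\times\partial X$ in rank one), and as an outline it is faithful to the actual proof, including the correct diagnosis of where the $\mathbb{H}^2$ hypothesis enters.

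That said, two steps in your sketch conceal most of the real work, and as phrased they would not go through. First, the extension of the quasi-isometry to a boundary map is not a soft consequence of ``Morse-type stability of geodesics'': in rank at least $2$ the Morse lemma fails for non-regular geodesics, and quasi-isometries of general CAT(0) spaces need not extend to visual boundaries at all. The correct mechanism is that a quasi-isometry sends maximal flats to within uniformly bounded Hausdorff distance of maximal flats (a theorem in its own right, proved by Mostow and later by Kleiner--Leeb by different methods); it is this quasi-flat rigidity, not equivariance or the full limit set of $\Gamma$, that makes the induced boundary map send apartments to apartments and hence defines a building isomorphism to which Tits' theorem applies. Second, in the reducible case the quasi-isometry does not a priori respect the de Rham decomposition, and matching the irreducible factors of $X$ with those of $Y$ (and handling irreducible lattices in products) requires the quasi-isometric splitting theorem rather than a routine ``assembling'' step. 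The rank-one portion of your sketch (quasi-conformality of $\partial f$, ergodicity, a.e.\ differentiability, conformality) is the genuinely correct skeleton of that case.
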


Let us explain why the hyperbolic plane has to be excluded. If $S$ is a Riemann surface of genus at least 2, it can be endowed with a hyperbolic metric and thus its universal cover is $\mathbb{H}^2$ (this is a \emph{uniformization} of the surface). But there are many non-isometric ways of endowing $S$ with a hyperbolic metric (the different possibilites are parametrised by the Teichm\"uller space of $S$). Two non-isometric ways give two non-conjugate embeddings of $\Pi_1(S)$ in $\mathbb{H}^2$ as a lattice.

A few years after Mostow proved his rigidity theorem, Margulis proved a very stronger result  \cite{MR1090825}. The name \emph{superrigidity}\index{superrigidity} for that result was coined by Mostow. Margulis stated the result purely in group-theoretic terms. It was an intermediate result to obtain arithmeticity of lattices. We use a more geometric rephrasing \cite{MR1168043,MR2655318}.
\begin{theorem}[Geometric Superrigidity] Let $X$ and $Y$ be proper symmetric CAT(0)  spaces or Euclidean buildings of rank at least 2. If $\Gamma$ is a lattice of $\Isom(X)$ and $\Gamma$ acts by isometries on $Y$ without fixed point in $\overline Y$ then there is a closed convex subspace $Z\subset Y$ which is isometric to a product of irreducible factors of $X$ and the action $\Gamma\action Z$ factorises via a quotient of $\Isom(X)$.
\end{theorem}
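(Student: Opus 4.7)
The plan is to adapt the geometric superrigidity paradigm via equivariant harmonic maps, in the spirit of Gromov--Schoen and Korevaar--Schoen.

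I would begin with a reduction on the target. A Zorn-type argument on the poset of nonempty closed $\Gamma$-invariant convex subsets of $Y$ produces a minimal such $Y_0 \subseteq Y$; a decreasing chain whose intersection were empty would force circumcenters to escape to infinity and yield a fixed point in $\bo Y$, contradicting the hypothesis. An Adams--Ballmann style analysis then lets one split off any Euclidean de Rham factor of $Y_0$, so one may assume $Y_0$ is a minimal CAT(0) target with no Euclidean factor and no global fixed point.

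Next, I would construct a $\Gamma$-equivariant harmonic map $f\colon X \to Y_0$. Finiteness of covolume of $\Gamma$ in $\Isom(X)$, which gives that $\Gamma \backslash X$ has finite volume, together with the absence of fixed points at infinity in $Y_0$, which provides the required coercivity, are precisely the hypotheses of the Korevaar--Schoen existence theorem in the symmetric space case; in the Euclidean building case one uses the Gromov--Schoen framework adapted to singular domains by Wang and Jost. The heart of the proof is then a Matsushima-type Bochner computation: the curvature operator of a higher rank symmetric space, or the combinatorial analogue at links in a building, is positive enough on the bundle of tensors of type $df$ that, combined with the nonnegativity supplied by the CAT(0) condition on the target, one concludes that $df$ is parallel. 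This Bochner step is where the rank $\geq 2$ hypothesis genuinely enters, and it is also the main obstacle: for Euclidean buildings, the estimate must be interpreted distributionally through the combinatorics of apartments and links, which is delicate; for rank one, the estimate simply does not close, consistent with the many non-rigid embeddings of surface groups in $\Isom(\mathbb{H}^2)$.

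Finally, a harmonic map with parallel differential is totally geodesic, so its image $Z = \overline{f(X)}$ is a closed convex subspace of $Y_0$ isometric, up to rescaling on each irreducible component, to a product of irreducible factors of $X$. To extend the $\Gamma$-action on $Z$ to one of $\Isom(X)$, I would invoke the Borel density theorem: a lattice in a semisimple group with no compact factor is Zariski dense, so the $\Gamma$-action on the finite-dimensional totally geodesic $Z$ extends by continuity and density to a continuous homomorphism $\Isom(X) \to \Isom(Z)$. Its kernel is a closed normal subgroup of the semisimple group $\Isom(X)$, and the quotient identifies with the product of those simple factors that remain in $Z$, yielding the stated factorisation through a quotient of $\Isom(X)$.
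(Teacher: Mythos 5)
The paper does not actually prove this theorem: it is stated as a geometric rephrasing of Margulis's superrigidity, with citations to Margulis's book and to Mok--Siu--Yeung's \emph{Geometric superrigidity}. Your outline follows the second of these sources (equivariant harmonic maps plus a Bochner--Matsushima formula), so it is a legitimate route to a substantial part of the statement, and the reduction to a minimal closed convex invariant subset, together with the use of the no-fixed-point-in-$\overline{Y}$ hypothesis as the coercivity input for Korevaar--Schoen existence, is correctly placed.

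As a proof of the theorem \emph{as stated}, however, there are genuine gaps. First, the hypothesis is only that $X$ has rank at least $2$, which includes reducible spaces such as $\mathbb{H}^2\times\mathbb{H}^2$ with $\Gamma$ an irreducible lattice (e.g.\ Hilbert modular groups). For such $X$ the curvature term in the Bochner formula is not ``positive enough'': it degenerates on products of rank-one factors, and the harmonic-map argument does not close. That case is handled by Margulis's original proof via measurable boundary maps and ergodicity of $\Gamma\action G/P$, not by the Bochner computation, so your claim that the rank hypothesis ``genuinely enters'' at the Bochner step misidentifies where higher rank is used in the general theorem. Second, the statement allows $X$ itself to be a Euclidean building, i.e.\ $\Gamma$ a lattice in a non-Archimedean group. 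Gromov--Schoen treat singular \emph{targets} with smooth domains; harmonic-map theory with a building as \emph{domain} is far less complete, and the references you gesture at do not deliver the full statement there --- again the boundary-theoretic proof is what covers it. Third, for non-uniform lattices the existence of a finite-energy equivariant map is not automatic from finite covolume and requires separate work near the cusps. Finally, the extension of the $\Gamma$-action on $Z$ to a quotient of $\Isom(X)$ via Borel density must be adapted when $\Isom(X)$ is totally disconnected. None of these defects is fatal to the harmonic-map program on its natural domain of validity, but together they mean your sketch proves a strictly weaker theorem than the one stated.
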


A similar result for symmetric spaces of rank 1 which are not the real or complex hyperbolic space were obtained later \cite{MR1147961,MR1215595}. This superrigidity result inspired a lot of different works in many different directions. Let us just mention \cite{MR2219304} for lattices in products, \cite{MR3343349} for symmetric spaces of infinite dimension and finite rank and \cite{MR2377496} for Busemann non-positively curved targets.

Let us introduce a weakening of being isometric for two metric spaces. Roughly speaking, two metric spaces are quasi-isometric if they are homothetic at large scale.
\begin{definition} Let $(X,d)$ and $(X',d')$ be two metric spaces. A map $f\colon X\to X'$ is a \emph{quasi-isometry}\index{quasi-isometry} if there are constants $L,C>0$ such that for any $x,y\in X$,
$$1/L\, d(x,y)-C\leq d'(f(x),f(y))\leq Ld(x,y)+C$$
and for any $x'\in X'$ there is $x\in X$ with $d'(x',f(x))\leq C$.
\end{definition}
It is a simple computation to show that if $f\colon X\to X'$ is a quasi-isometry then there is $g\colon X'\to X$ which also is a quasi-isometry such that for any $x\in X$, $d(x,g(f(x)))\leq 2LC$. In that sense $g$ is a \emph{quasi-inverse} to $f$.

In Mostow's theorem (Theorem \ref{Mostow}), the two actions of the common cocompact lattice gives an \emph{equivariant} quasi-isometry between the two symmetric spaces $X,Y$. Mostow showed that this quasi-isometry lies at bounded distance from an isometry (up to rescaling the metric on each factor). Margulis conjectured that if $X,Y$ are irreducible symmetric spaces of higher rank then the conclusion should also hold without the equivariance of the quasi-isometry.

\begin{theorem}[{\cite[Theorem 1.1.3]{MR1608566}}] Let $X$ be an irreducible symmetric space of higher rank or an irreducible higher rank Bruhat-Tits building and let $Y$ be a product of Euclidean spaces, symmetric spaces and Euclidean thick buildings. If $X$ and $Y$ are quasi-isometric then $Y$ has only one factor which is  isometric (after rescaling) to $X$ and the quasi-isometry is at bounded distance from that isometry.
\end{theorem}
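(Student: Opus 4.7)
The plan is to use the Kleiner-Leeb strategy via asymptotic cones. First, given a quasi-isometry $f\colon X\to Y$ with constants $(L,C)$, fix a non-principal ultrafilter $\omega$ on $\N$, a scaling sequence $\lambda_n\to+\infty$, and basepoints; passing to ultralimits yields a bi-Lipschitz homeomorphism $f_\omega\colon X_\omega\to Y_\omega$ of asymptotic cones. By the results recalled in the excerpt, $X_\omega$ is a (generally non-discrete) Euclidean building of the same rank and same spherical Weyl group as $X$, and $Y_\omega$ is the product of the corresponding Euclidean and Euclidean-building cones of the factors of $Y$. Since $X$ is irreducible, higher rank, and thick, $X_\omega$ inherits these properties and is in particular an irreducible thick Euclidean building of rank $\geq 2$.

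Second, the heart of the argument is to promote $f_\omega$ to a homothety onto a single factor. One characterizes apartments in $X_\omega$ intrinsically: by axiom (3) of Definition \ref{dfb}, maximal flats coincide with apartments. Under the bi-Lipschitz map $f_\omega$, a maximal flat is sent to a top-dimensional bi-Lipschitz Euclidean disk in $Y_\omega$; using a Rademacher-type theorem for bi-Lipschitz maps between CAT(0) spaces of finite rank, one obtains differentiability almost everywhere. At a point of differentiability, the tangent map sends the finite set of $\Delta$-directions of $X_\omega$ to such directions in $Y_\omega$, and the irreducibility and higher rank of the spherical Weyl group $W$ (which has sufficiently many regular reflections) force this tangent map to be a homothety with a \emph{uniform} ratio. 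Propagating this rigidity via the apartment axioms shows $f_\omega$ is a homothety onto its image; irreducibility of $X_\omega$ then forces the image to lie in a single factor of $Y_\omega$ which must be isometric to $X_\omega$ after rescaling.

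Third, one descends this information from the asymptotic cone back to the original spaces. Only one factor $Y_0$ of $Y$ can have asymptotic cone isometric to $X_\omega$: a product decomposition of the target would, by taking cones, contradict the irreducibility just established. The factor $Y_0$ must have the same rank and the same spherical Weyl group as $X$, and the classification of irreducible symmetric spaces and Bruhat-Tits buildings (combined with incompatibility of Weyl groups to rule out wrong types) identifies $Y_0$ as isometric to $X$ after rescaling the metric. To upgrade ``quasi-isometry" to ``at bounded distance from an isometry", one transports the homothetic structure of $f_\omega$ back: the induced boundary map $\partial_T f\colon\partial_T X\to\partial_T Y_0$ is an isomorphism of spherical buildings at infinity (by the asymptotic cone picture applied at each direction), and by the Tits building rigidity of $\Isom(X)$ there is a unique isometry $\iota\colon X\to Y_0$ inducing the same boundary map; a standard Morse-lemma / flat-sector comparison then shows $\sup_{x\in X} d(f(x),\iota(x))<\infty$.

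The main obstacle is the second step: the proof that a bi-Lipschitz homeomorphism between thick irreducible Euclidean buildings of higher rank is automatically homothetic. This is the central technical achievement of Kleiner-Leeb and relies crucially on rank $\geq 2$ (in rank one, quasi-isometries of $\mathbb{H}^2$ induce arbitrary quasi-conformal boundary maps, so no such rigidity holds) and on irreducibility (a product target would allow non-trivial rescalings on each factor separately). All other steps are formal once this core rigidity is in hand.
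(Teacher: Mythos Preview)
The paper does not supply a proof of this theorem: it is quoted as a black box from Kleiner--Leeb \cite{MR1608566}, so there is no ``paper's own proof'' to compare against. Your outline is, at a high level, the Kleiner--Leeb strategy (pass to asymptotic cones, prove that bi-Lipschitz homeomorphisms of irreducible thick higher-rank Euclidean buildings are homotheties, then descend), and the paper's surrounding discussion of asymptotic cones of symmetric spaces and buildings is consistent with that.

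That said, as a self-contained argument your sketch has soft spots that would need real work. The differentiation step is not a generic ``Rademacher-type theorem for bi-Lipschitz maps between CAT(0) spaces''; Kleiner--Leeb first show that flats go to flats (via a topological characterisation of apartments and the structure of the spherical building at infinity of the cone), and only then analyse the induced maps between apartments. Your descent step also conflates two distinct mechanisms: in the actual proof one does not invoke the classification of symmetric spaces/Bruhat--Tits buildings to identify $Y_0$; rather, one shows directly that the original quasi-isometry sends maximal flats Hausdorff-close to maximal flats (using the cone rigidity at all scales and basepoints), obtains a well-defined map of Tits boundaries, and then uses Tits' rigidity of higher-rank spherical buildings to produce the isometry. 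Finally, the ``Morse-lemma'' bound on $d(f(x),\iota(x))$ is not automatic in higher rank and is itself part of the Kleiner--Leeb machinery. So your proposal correctly identifies the architecture, but several load-bearing steps are asserted rather than argued.
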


Kleiner and Leeb proved that for products of Euclidean spaces, symmetric spaces and Euclidean buildings, the splitting as a product of irreducible factors is an invariant of quasi-isometry. In particular, they were able to prove the Margulis conjecture. 

\begin{corollary}\label{kl}If $X$ is an irreducible higher rank symmetric space and $Y$ is a quasi-isometric symmetric space  then $X$ and $Y$ are homothetic.
\end{corollary}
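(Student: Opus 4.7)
The plan is to reduce the statement to a direct application of the Kleiner--Leeb quasi-isometric rigidity theorem (the preceding displayed theorem), by producing a suitable product decomposition of $Y$ to feed into it.

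First, I would invoke the de Rham decomposition for symmetric spaces of non-compact type (discussed in \S\ref{syms}) to write
\[ Y \simeq Y_1 \times \cdots \times Y_n, \]
where each $Y_i$ is an irreducible symmetric space of non-compact type. Since $Y$ is assumed to be a symmetric space in the sense of this paper (i.e.\ of non-compact type, hence with no local Euclidean factor), there are no $\R^k$ factors and no Euclidean building factors appearing in this decomposition. In particular, $Y$ is of the precise form required by the hypothesis of the Kleiner--Leeb theorem, namely a product of Euclidean spaces, symmetric spaces, and Euclidean thick buildings.

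Next, I would apply the Kleiner--Leeb theorem to the quasi-isometry $X \to Y$. By hypothesis $X$ is an irreducible higher rank symmetric space, so the theorem applies and tells us that among the factors appearing in the product decomposition of $Y$, exactly one is (after rescaling) isometric to $X$, and the given quasi-isometry is at bounded distance from this isometry. Since a quasi-isometry has a quasi-inverse and is in particular coarsely surjective onto $Y$, an isometric embedding of $X$ onto a single factor $Y_{i_0}$ which stays at bounded distance from the quasi-isometry forces the complementary product $\prod_{i \neq i_0} Y_i$ to be bounded; but a non-trivial symmetric space of non-compact type is unbounded (it contains geodesic lines), so the complementary product must be a point, i.e.\ $n = 1$ and $Y = Y_{i_0}$.

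Combining these two steps, $Y$ itself is, after rescaling, isometric to $X$, which is exactly the statement that $X$ and $Y$ are homothetic. The only real content is the Kleiner--Leeb theorem itself; the main conceptual step in the reduction is recognising that $Y$, as a symmetric space of non-compact type, fits verbatim into the product framework of that theorem and that the ``only one factor'' conclusion rules out any additional irreducible factor in $Y$, using that a symmetric space of non-compact type cannot have a bounded factor. There is no real obstacle beyond this bookkeeping, since Kleiner--Leeb does all the geometric work.
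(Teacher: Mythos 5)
Your proof is correct and takes essentially the same route as the paper, which states Corollary \ref{kl} as an immediate consequence of the Kleiner--Leeb quasi-isometric rigidity theorem without further argument. Your additional bookkeeping (using the de Rham decomposition of $Y$ and coarse surjectivity of the quasi-isometry to rule out extra unbounded factors) is a valid and welcome filling-in of the details the paper leaves implicit.
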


The symmetric spaces of rank 1 are hyperbolic spaces over $\R,\C$, the quaternions or Cayley octonions. In the last two cases, Corollary \ref{kl} holds but for the first two ones, there are much more quasi-isometries than the ones at bounded distance from homotheties.

\subsection{Isometries of proper CAT(0) spaces}
Semi-simple algebraic groups over local fields (and their subgroups) are leading examples of groups acting on proper CAT(0) spaces. As soon as the CAT(0) space is proper, the isometry group is locally compact. The structure theory of locally compact groups is quite well understood and in particular, the solution of the fifth Hilbert   problem tells us when a connected locally compact group is actually a Lie group and, in particular, the connected component of the isometry group of a symmetric space.

In a series of two papers \cite{MR2574741,MR2574740}, Caprace and Monod developed a structure theory for groups acting on proper CAT(0) spaces. The settings are cocompact actions of groups acting on a CAT(0) space or merely that the action of the full group of isometry is cocompact. The two motivating questions are: What are the general properties of these pairs of groups and spaces? How to characterise Euclidean buildings and symmetric spaces? One main theme underlying this question is splitting canonically the space into irreducible pieces (de Rham decompositions).

They prove a lot of theorems and we emphasise just a few of them. These may not be the strongest ones but they illustrate well the general ideas. The first one can be seen as a solution of the fifth Hilbert  problem for CAT(0) spaces. Recall that a locally  compact group $G$ has a connected component of the identity $G^0$ which is a normal closed subgroup such that $G/G^0$ is totally disconnected. Moreover $G^0$ has a unique maximal normal amenable subgroup $A$ (the amenable radical) such that $G^0/A$ is a semi-simple Lie group without compact factors. 

A CAT(0) space is \emph{geodesically complete}\index{geodesically complete space}\index{space!geodesically complete} if any segment is included in some geodesic line. Under geodesic completeness, understanding the relation between splitting and these normal subgroups leads to the following theorem.

\begin{theorem}[{\cite[Theorem 1.1]{MR2574740}}] Let $X$ be a proper geodesically complete CAT(0) space such that $\Isom(X)$ acts cocompactly without fixed point at infinity. Then $X$ splits as a product
$$X\simeq M\times \R^n\times Y$$
where $M$ is a symmetric space and $\Isom(Y)$ is totally disconnected and acts by semi-simple isometries. 

Moreover, this splitting is preserved by isometries.
\end{theorem}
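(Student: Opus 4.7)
The plan is to read off the decomposition of $X$ from the structure of $G := \Isom(X)$. Since $X$ is proper, $G$ is locally compact; let $G^0$ be the identity component and $A\trianglelefteq G^0$ its amenable radical. As recalled in the excerpt, the solution of Hilbert's fifth problem equips $G^0/A$ with the structure of a semi-simple Lie group without compact factors. The three factors $M$, $\R^n$ and $Y$ should correspond respectively to the semi-simple part, the amenable radical and the totally disconnected quotient $G/G^0$; the task is to realise this group-theoretic trichotomy geometrically and canonically inside $X$.

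First I would exploit the no-fixed-point-at-infinity hypothesis to locate the Euclidean factor. An Adams--Ballmann type argument says that a closed amenable subgroup acting on a proper CAT(0) space either fixes a point at infinity or preserves a canonical flat. Because $A$ is normal in $G$, an $A$-fixed point at infinity would yield, via the circumcentre of its $G$-orbit in an appropriate angular geometry, a $G$-fixed point at infinity, contradicting the hypothesis. Hence $A$ preserves a canonical union of parallel flats on which it acts cocompactly by translations. Using geodesic completeness together with the convexity of displacement functions, I would extract a canonical $G$-invariant splitting $X \simeq X' \times \R^n$ on which $A$ acts trivially on $X'$ and by translations on $\R^n$.

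Second, working on $X'$, the quotient $G^0/A$ now acts as a semi-simple Lie group without compact factors. I would argue that its orbits, together with the infinitesimal Lie-group data, assemble into a totally geodesic symmetric subspace on which $G^0/A$ acts transitively; passing to the closed convex hull and invoking the classification statement quoted earlier (Théorème 8.4 in \cite{mathese}), this produces a proper symmetric CAT(0) space with no Euclidean de Rham factor, i.e.\ a symmetric space $M$ of non-compact type. A splitting theorem, applied to the distance function to a convex $G^0$-orbit (which is $G^0$-invariant and convex), then peels off $M$ as a direct factor: $X' \simeq M \times Y$. By construction $G^0$ acts trivially on $Y$, so $\Isom(Y)$ is a closed subgroup of $G/G^0$ and is therefore totally disconnected.

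Finally I would verify semi-simplicity of isometries of $Y$ and canonicity of the splitting. Cocompactness descends to $Y$, so for any $g \in \Isom(Y)$ a minimising sequence for $x \mapsto d(gx,x)$ has, after translating by elements of a compact fundamental domain, a convergent subsequence; geodesic completeness yields an actual minimum, hence semi-simplicity. Canonicity follows since $A$ and $G^0$ are characteristic subgroups of $G$, so the Euclidean and symmetric factors are invariant under every isometry, and $Y$ is determined as the remaining orthogonal factor. The main obstacle, I expect, is passing from \emph{preserves a canonical family of flats or orbits} to a genuine \emph{direct product} decomposition of the ambient space: this is the CAT(0) analogue of a de Rham decomposition and requires subtle control of parallel sets, cocompactness and geodesic completeness to promote invariance of a subspace to a splitting of $X$ itself.
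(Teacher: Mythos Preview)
The survey does not prove this theorem; it is quoted without proof from Caprace--Monod \cite{MR2574740}, so there is no argument in the paper against which to compare your proposal.

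For what it is worth, your outline is broadly faithful to the strategy of that source: decompose $G=\Isom(X)$ through the chain $A\trianglelefteq G^0\trianglelefteq G$ supplied by Hilbert's fifth problem, and realise each layer geometrically by a splitting theorem for normal subgroups. One point deserves correction, however. Your semi-simplicity argument for $\Isom(Y)$ is not the right mechanism: cocompactness together with a minimising-sequence argument does not by itself exclude parabolic isometries (parabolics do occur under cocompact isometry groups). In Caprace--Monod the semi-simplicity on the $Y$-factor comes specifically from \emph{total disconnectedness} combined with geodesic completeness: point-stabilisers are open, hence each compact open subgroup has a non-empty fixed-point set, and one shows that the infimum of the displacement function of any element is attained on such a set. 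You also correctly identify, but do not resolve, the genuine difficulty of promoting an invariant convex subspace to a direct factor; in the source this is handled by a general lemma (minimal closed convex invariant sets for a normal subgroup split off as factors), which needs both the no-fixed-point-at-infinity hypothesis and minimality in an essential way.
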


Under a supplementary assumption on stabilisers of points at infinity, one gets the following characterisation of symmetric spaces and Euclidean buildings.

\begin{theorem}[{\cite[Theorem 1.3]{MR2574740}}]Let $X$ be a geodesically complete proper CAT(0) space. Suppose that the stabiliser of every point at infinity acts cocompactly on $X$.
Then $X$ is isometric to a product of symmetric spaces, Euclidean buildings and Bass--Serre trees.
\end{theorem}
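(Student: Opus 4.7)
The plan is to invoke the immediately preceding theorem to write
\[X \simeq M \times \R^n \times Y,\]
where $M$ is a symmetric space, $\R^n$ is a Euclidean factor (which itself qualifies as a flat Euclidean building with translation Weyl group), and $\Isom(Y)$ is totally disconnected and acts cocompactly by semi-simple isometries. Before applying this, one checks that the hypothesis on boundary-point stabilisers implies those of the preceding theorem: cocompactness of $\Isom(X)$ is immediate from taking any $\xi\in\bo X$, and a global fixed point $\xi\in\bo X$ of $\Isom(X)$ would force invariance of the Busemann function based at $\xi$, which combined with geodesic completeness and cocompactness splits off a Euclidean line and gets absorbed into the $\R^n$-factor. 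The symmetric and Euclidean pieces being in the desired form, the task reduces to showing that $Y$ is a product of Euclidean buildings (of nontrivial spherical Weyl group) and Bass--Serre trees. The stabiliser hypothesis descends to $Y$: a point $\eta\in\bo Y$ determines a point of $\bo X$ via the spherical join structure of $\bo X$, and the projection to $\Isom(Y)$ of the cocompact stabiliser $\Isom(X)_\eta$ still acts cocompactly on $Y$.

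Next I would decompose $Y$ into its irreducible geodesically complete factors via the de Rham-type decomposition for cocompact proper CAT(0) spaces (another input from the Caprace--Monod papers). Total disconnectedness of the isometry group and the cocompact-boundary-stabiliser hypothesis pass to each irreducible factor. It therefore suffices to show that an irreducible geodesically complete proper CAT(0) space $Z$ whose isometry group is totally disconnected, acts cocompactly by semi-simple isometries, and has cocompact stabilisers of all points at infinity, is either a Bass--Serre tree (if $\rank Z = 1$) or a Euclidean building of higher rank. In rank $1$, geodesic completeness and the totally disconnected cocompact action identify $Z$ (after rescaling) with a simplicial tree on which the group acts cocompactly without inversions after subdivision, i.e.\ a Bass--Serre tree.

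For $\rank Z \geq 2$, the strategy is to recognise $Z$ as a Euclidean building via the Kleiner--Leeb axioms of Definition \ref{dfb}. The cocompactness of every boundary-point stabiliser plays the role of an Iwasawa decomposition: combined with total disconnectedness and higher rank it yields \emph{strong transitivity at infinity}, namely that $\Isom(Z)$ acts transitively on pairs consisting of an apartment of the Tits boundary $\bo Z$ and a chamber inside it. Tits' recognition theorem then exhibits $(\bo Z,\angle_{\mathrm{Tits}})$ as a thick spherical building, whose spherical Coxeter complex cones off to a Euclidean Coxeter model $(E,W_{\mathrm{Aff}})$. The apartments of $Z$ are taken to be its maximal flats, which exist through every regular direction by geodesic completeness and the cocompact stabiliser hypothesis. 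The main obstacle will be verifying the atlas axioms (3) and (4) of Definition \ref{dfb}: that every geodesic segment is contained in some apartment, and that transitions between overlapping apartments are restrictions of $W_{\mathrm{Aff}}$-elements. The first is obtained by extending a segment into a flat using the cocompact stabiliser of either endpoint at infinity; the second reduces to the rigidity of the Weyl structure at infinity already provided by Tits' theorem. Once these axioms are in place, $Z$ is a Euclidean building in the Kleiner--Leeb sense, and reassembling the irreducible factors of $Y$ yields the desired product decomposition.
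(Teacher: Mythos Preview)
The paper under review is a survey and does \emph{not} provide a proof of this theorem: it merely quotes the statement from Caprace--Monod \cite[Theorem~1.3]{MR2574740} as motivation, with no argument given. There is therefore nothing in the present paper to compare your attempt against.

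That said, a brief comment on your sketch. The overall architecture---apply the preceding splitting theorem, pass to the totally disconnected factor, decompose into irreducibles, then treat the rank~$1$ and higher-rank cases separately---is indeed the shape of the Caprace--Monod argument. However several of your steps are assertions rather than arguments. In particular: (i) a global fixed point $\xi\in\bo X$ for $\Isom(X)$ does not by itself split off a Euclidean line; one only gets invariance of the Busemann function up to an additive constant, and extracting a flat factor requires the full minimality/cocompactness machinery of \cite{MR2574740}. (ii) Your derivation of ``strong transitivity at infinity'' from the cocompact-stabiliser hypothesis is the heart of the matter and is not explained; in the original paper this is where most of the work lies, via an analysis of horospherical subgroups and the structure of the Tits boundary. (iii) The rank~$1$ identification of $Z$ with a simplicial tree needs a genuine argument (e.g.\ bounded geometry from total disconnectedness plus geodesic completeness forcing local tree-likeness). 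As written, your proposal is a plausible outline but not a proof.
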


Discrete subgroups of semi-simple algebraic groups over local fields have been studied extensively (see e.g. \cite{MR1090825}). In particular, for lattices, that is, discrete subgroups of finite covolume,  two important theorems are the Borel density theorem and the Margulis arithmeticity theorem. Roughly speaking, the first one  proves Zariski density of lattices and the second proves arithmeticity of lattices in higher rank.

Since isometry groups of proper CAT(0) spaces have Haar measures, one has also a notion of lattice.
 
\begin{definition} Let $X$ be a proper CAT(0) space with cocompact isometry group. A \emph{CAT(0) lattice}\index{CAT(0)!lattice}\index{lattice!CAT(0)} is a lattice of $\Isom(X)$.
\end{definition}
The Borel density theorem implies that any lattice of a connected  semi-simple real algebraic group without compact factor is Zariski dense. For connected semi-simple Lie group without compact factor, Zariski density of a subgroup is understood via the action on the associated symmetric space. Let $X$ be a symmetric space. A subgroup $\Gamma\leq\Isom(X)^0$ is Zariski-dense if and only if $\Gamma\action X$ is minimal and $\Gamma$ does not fix any point at infinity. If the last two conditions are satisfied, one says that the action is \emph{geometrically dense}\index{geometrically dense action}\index{action!geometrically dense}.

Here is an analog for general proper CAT(0) spaces.

\begin{theorem}[{\cite[Theorem 1.1]{MR2574741}}]Let $X$ be a proper CAT(0) space, $G$ a locally compact group acting continuously by isometries on X and $\Gamma < G$ a lattice. Suppose that $X$ has no Euclidean factor.
If $G$ acts minimally on $X$ without fixed point at infinity, so does $\Gamma$.
\end{theorem}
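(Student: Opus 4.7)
The plan is to transfer any hypothetical $\Gamma$-invariant obstruction to the conclusion---a fixed point in $\partial X$ or a proper $\Gamma$-invariant closed convex subspace---into a corresponding $G$-invariant object by averaging over the homogeneous space $G/\Gamma$ against its $G$-invariant probability measure $\nu$ (which exists by the lattice hypothesis), and then to exploit $G$-minimality together with the absence of a Euclidean factor to reach a contradiction. A key external ingredient is a theorem of Adams--Ballmann type: \emph{any group preserving a Borel probability measure on the visual boundary of a proper CAT(0) space either fixes a point at infinity or preserves a Euclidean flat.}

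\textbf{No fixed point at infinity.} Suppose $\Gamma\xi=\xi$ for some $\xi\in\partial X$. Because $\xi$ is $\Gamma$-fixed, the assignment $g\mapsto g_{*}\delta_{\xi}=\delta_{g\xi}$ is constant on right $\Gamma$-cosets and descends to a $G$-equivariant Borel map $\bar\Phi\colon G/\Gamma\to\Prob(\partial X)$. Its barycenter $\tilde\mu:=\int_{G/\Gamma}\bar\Phi\,d\nu$ is, by a standard change of variables using $G$-invariance of $\nu$, a $G$-invariant Borel probability measure on the compact space $\partial X$. The Adams--Ballmann alternative then forces $G$ either to fix a boundary point (excluded by hypothesis) or to preserve a Euclidean flat $F\subseteq X$. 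In the latter case $F$ is a nonempty $G$-invariant closed convex subspace, so $F=X$ by $G$-minimality, making $X$ itself a Euclidean space and contradicting the no-Euclidean-factor assumption.

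\textbf{Minimality.} Suppose $\Gamma$ preserves a proper closed convex subspace $Y\subsetneq X$. First, $Y$ is unbounded: the previous step rules out $\Gamma$-fixed points in $\partial X$, and $\Gamma$ cannot fix a point of $X$ either, since the stabilizer of a point in a proper CAT(0) space is compact while $\Gamma$ is an infinite discrete subgroup of the necessarily non-compact group $G$ (a compact $G$ would have a Cartan fixed point, violating minimality). Averaging the $\Gamma$-invariant convex $1$-Lipschitz function $f(x)=d(x,Y)$ gives
\[
\phi(x):=\int_{G/\Gamma} f(g^{-1}x)\,d\nu(g\Gamma),
\]
which is $G$-invariant, convex, and $1$-Lipschitz (finiteness everywhere reduces via the Lipschitz bound to finiteness at one basepoint). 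Its sublevel sets are $G$-invariant closed convex subspaces, so $G$-minimality forces $\phi$ to be constant. The constant cannot be zero: continuity of $g\mapsto f(g^{-1}x)$ together with the full support of $\nu$ would then yield $f\equiv 0$, i.e.\ $Y=X$. To rule out the remaining case of a strictly positive constant, I would construct a $\Gamma$-invariant probability measure carried by the nonempty proper $\Gamma$-invariant compact set $\partial Y\subsetneq\partial X$, feed it into the induction-to-$G$ mechanism of the previous step, and close the argument with Adams--Ballmann exactly as before.

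\textbf{Main obstacle.} The principal difficulty lies in the very last step of the minimality argument: producing a genuine $\Gamma$-invariant probability measure on $\partial Y$ when $\Gamma$ need not be amenable, and then checking that after averaging over $G/\Gamma$ the resulting $G$-invariant boundary measure does not spread out to all of $\partial X$ and trivialize the contradiction. Handling this will likely require a careful analysis of the limit set of $\Gamma$ inside $\partial Y$ and of its $G$-translates; securing the existence and non-degeneracy of this auxiliary measure is the step I would expect to demand the most work, everything else being fairly mechanical applications of the averaging machinery and the Adams--Ballmann dichotomy.
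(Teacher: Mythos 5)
The survey states this theorem without proof (it is quoted from Caprace--Monod), so your attempt can only be measured against the cited source. Your first step is essentially the published argument and is sound: if $\Gamma$ fixes $\xi\in\partial X$, the map $g\Gamma\mapsto\delta_{g\xi}$ is a well-defined continuous $G$-map $G/\Gamma\to\Prob(\partial X)$, the pushforward of $\nu$ is a $G$-invariant probability measure on the compact space $\partial X$, and the Adams--Ballmann dichotomy (whose measure-theoretic form you invoke is indeed valid for proper CAT(0) spaces) kills it using minimality and the absence of a Euclidean factor.

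The minimality step, however, has two genuine gaps. First, the finiteness of $\phi(x)=\int_{G/\Gamma} d(g^{-1}x,Y)\,d\nu$ is not "reducible to finiteness at one basepoint": the reduction via the Lipschitz bound is fine, but finiteness at a single basepoint is itself unproved. The integrand $g\Gamma\mapsto d(g^{-1}x_0,Y)$ is a continuous but a priori unbounded function on the finite-measure, generally non-compact space $G/\Gamma$, and for a non-uniform lattice there is no reason it should be integrable. Second, and more seriously, the case $\phi\equiv c>0$ is exactly where the theorem lives, and your proposed repair --- a $\Gamma$-invariant probability measure on $\partial Y$ --- is precisely what cannot be manufactured: $\Gamma$ is not assumed amenable, and no limit-set analysis will conjure an invariant measure out of a non-amenable group. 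You correctly flag this as the main obstacle, but it is not a technical loose end; it is the heart of the proof, and as written the argument does not close. The published proof sidesteps both problems by never integrating an unbounded function: instead of averaging $d(\cdot,Y)$, one pushes $\nu$ forward under $g\Gamma\mapsto gY$ into the \emph{compact} Chabauty space of closed convex subsets of the proper space $X$, obtaining a $G$-invariant probability measure there, and then proves (again with an Adams--Ballmann-type analysis, using minimality, no fixed point at infinity and no Euclidean factor) that the only such measure charging nonempty convex sets is $\delta_X$; hence $gY=X$ for a.e.\ $g$ and $Y=X$. A minor further slip: your claim that $\Gamma$ cannot fix a point of $X$ because point stabilizers in $G$ are compact is unjustified, since the $G$-action is only assumed continuous, not proper or faithful; the bounded-$Y$ case should instead be handled by pushing the fixed point forward to a $G$-invariant measure on $X$ and extracting a $G$-invariant bounded set.
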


Surprisingly, the mere existence of a parabolic isometry in the full isometry group can lead to a very strong result like the following arithmeticity statement.
\begin{theorem}[{\cite[Theorem 1.5]{MR2574741}}]Let $(\Gamma,X)$ be an irreducible finitely generated CAT(0) lattice where $X$ is geodesically complete and possesses some parabolic isometry.
If $\Gamma$ is residually finite, then $X$ is a product of symmetric spaces and Bruhat-Tits buildings. In particular, $\Gamma$ is an arithmetic lattice unless $X$ is a real or complex hyperbolic space.
\end{theorem}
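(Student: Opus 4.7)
The plan is to invoke the two structural decomposition theorems of Caprace--Monod stated earlier in the excerpt and then peel off unwanted factors using the three hypotheses (irreducibility, existence of a parabolic isometry, residual finiteness) in turn.

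First I would apply the decomposition theorem to $X$. Since $\Gamma$ is a lattice in $\Isom(X)$, the isometry group acts cocompactly on $X$; together with geodesic completeness this yields a splitting
\[ X \;\simeq\; M\times\R^n\times Y, \]
in which $M$ is a symmetric space of non-compact type, $\R^n$ is the Euclidean de~Rham factor, and $\Isom(Y)$ is totally disconnected acting semi-simply. Irreducibility of $(\Gamma,X)$ forces $n=0$: a nontrivial Euclidean factor would, via the Flat Torus Theorem applied to a free abelian subgroup of $\Gamma$ of appropriate rank, give a $\Gamma$-equivariant direct decomposition of $X$, contradicting irreducibility.

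Next I would refine the totally disconnected factor $Y$ using the parabolic isometry, aiming to invoke the second Caprace--Monod characterisation (cocompact stabilisers at infinity imply $X$ is a product of symmetric spaces, Euclidean buildings, and Bass--Serre trees). The task is to verify that boundary stabilisers act cocompactly. Given a parabolic $g\in\Isom(X)$, any accumulation point $\xi\in\bo X$ of a minimising sequence for $x\mapsto d(gx,x)$ is fixed by $g$, and cocompactness of $\Isom(X)\curvearrowright X$ then propagates to a statement about $\Stab(\xi)$; the totally disconnected structure of $\Isom(Y)$ and the $\Isom(X)$-saturation of $\xi$ are used to extend cocompactness of $\Stab(\xi)$ to all points of $\bo X$. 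I expect this propagation step to be the main obstacle: turning the existence of a \emph{single} parabolic element into a global cocompactness statement at infinity is the analytic heart of the theorem, and presumably requires a careful use of the minimality of $\Isom(X)\curvearrowright X$ (obtained in the geometric-density theorem stated earlier) to chase the orbit of $\xi$ through $\bo X$.

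Assuming this, $X$ splits as a product of symmetric spaces, thick Euclidean buildings, and Bass--Serre trees. I would then invoke residual finiteness of $\Gamma$ to rule out Bass--Serre tree factors: in the tradition of Burger--Mozes, lattices with a nontrivial tree factor tend to have dense commensurators in the relevant $\Aut$-group, and the rigidity imposed by the remaining factors together with density of the commensurator obstructs residual finiteness of $\Gamma$. Hence $X$ is a product of symmetric spaces and Euclidean buildings. Finally, applying the Geometric Superrigidity Theorem (stated earlier) to $\Gamma$ identifies the Euclidean building factors as Bruhat--Tits buildings and upgrades $\Gamma$ to an arithmetic lattice; the only situations in which superrigidity and hence arithmeticity can fail are the two rank-one cases over $\R$ and $\C$, which appear as the stated exceptions.
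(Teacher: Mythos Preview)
The survey does not prove this theorem; it is quoted verbatim from \cite{MR2574741}, so there is no in-paper argument to compare against. I will instead compare your outline with the Caprace--Monod proof and flag where it breaks.

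Your first reduction (eliminating the Euclidean factor via irreducibility of the lattice) is correct and is how that factor is handled.

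The decisive gap is exactly the step you yourself mark as the obstacle: upgrading the existence of \emph{one} parabolic isometry to the hypothesis of \cite[Theorem~1.3]{MR2574740}, namely that \emph{every} boundary stabiliser acts cocompactly. That implication is not available, and the original proof does not go through that characterisation theorem. What the parabolic isometry really buys is much more modest: on a proper geodesically complete CAT(0) space, a cocompact totally disconnected group acts by semi-simple isometries, so the presence of a parabolic element forces $\Isom(X)$ to have either a non-trivial identity component or a non-discrete totally disconnected factor. Caprace--Monod then feed this into their structure theory (smoothness of open compact stabilisers, simplicity of the factors of $\Isom(X)$) together with the irreducibility of the lattice and superrigidity/arithmeticity machinery to conclude that every factor of $\Isom(X)$ is a simple algebraic group over a local field; only then is $X$ identified as a product of symmetric spaces and Bruhat--Tits buildings.

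Your use of residual finiteness is also misplaced. It is not invoked, Burger--Mozes style, to kill Bass--Serre tree factors (rank-one Bruhat--Tits trees are perfectly allowed in the conclusion); rather, it is what rules out the \emph{non-linear} simple factors of $\Isom(X)$ that the structure theory would otherwise permit. So the global architecture you sketch --- decompose, exploit the parabolic to constrain the factors, conclude arithmeticity --- is right in spirit, but the mechanism at the two key steps is different from what you propose, and the ``cocompact boundary stabilisers'' route does not go through.
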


\section{Amenability and non-positive curvature}

\subsection{Amenability}

Recall that a topological group is \emph{amenable} if for any compact $G$-space (in a locally convex space on which $G$ acts affinely), there is a fixed point. First examples of amenable groups are given by Abelian and compact groups and amenability is stable under group extension and passing to closed subgroups. For example, virtually solvable groups are amenable. For a reference on amenability we recommend  \cite[Appendix G]{MR2415834} which is concise and contains full proofs of the equivalence between the most useful definitions of amenability. 

One main obstruction to amenability is the existence of a closed free subgroup. Free groups are not amenable because of paradoxical decompositions (See for example \cite{MR803509}).

In the realm of CAT(0) spaces, the first occurence of amenability is due to flatness. Recall that the isometry group of the Euclidean space $\R^n$ is $\Or(n)\ltimes\R^n$ which is amenable as an extension of a compact group by an Abelian one.

Since linear groups are one of our main sources of groups acting on spaces of non-positive curvature, it is natural to concentrate first on that case. 
\begin{theorem}[{Tit's Alternative \cite[Theorem 1]{MR0286898}}]\index{Tits' alternative} Let $\Gamma$ be a subgroup of $\GL(n,k)$ where $k$ is a field of characteristic 0 and $n\in\N$. Then $\Gamma$ is virtually solvable or contains a non-Abelian free subgroup.
\end{theorem}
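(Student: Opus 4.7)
The plan is to reduce $\Gamma$ to a Zariski-dense subgroup of a semisimple algebraic group, then build a non-Abelian free subgroup via the ping-pong lemma applied to a suitable projective representation. The argument is essentially independent of the CAT(0) framework of the rest of the paper, relying instead on algebraic-group theory and valuations.

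First I would reduce to the case that $k$ is finitely generated over $\Q$: replace $k$ by the subfield generated by the entries of a countable generating set of $\Gamma$. Let $G$ be the Zariski closure of $\Gamma$ in $\GL_n$ over $\overline{k}$, and let $G^0$ be the identity component. Since $G/G^0$ is finite, $\Gamma^0:=\Gamma\cap G^0$ has finite index in $\Gamma$. If $G^0$ is solvable then $\Gamma^0$ is a subgroup of a solvable algebraic group and we are done. Otherwise the quotient of $G^0$ by its solvable radical is a non-trivial semisimple algebraic group, and after replacing $\Gamma$ by a finite-index subgroup and projecting, I would assume $\Gamma$ is Zariski dense in a non-trivial semisimple algebraic group $H$ defined over $k$.

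The next and most delicate step is to produce an element $\gamma_0\in\Gamma$ that acts very proximally on some projective space. Using that $k$ is finitely generated, I would exhibit an absolute value $|\cdot|$ on $k$, equivalently an embedding $k\hookrightarrow K$ into a local field, together with $\gamma_0\in\Gamma$ whose eigenvalues in $\overline{K}$ do not all have the same absolute value. Ruling out that $\Gamma$ lies in a bounded subgroup of $\GL_n(K_v)$ for \emph{every} completion $K_v$ is the technical heart of the argument: if it did, one would obtain strong arithmetic constraints on $\Gamma$ that are incompatible with Zariski density in a semisimple group. Once $\gamma_0$ and $|\cdot|$ are in hand, I would choose an irreducible representation $\rho\colon H\to\GL(V)$ whose highest weight, with respect to a maximal $K$-split torus containing a conjugate of the diagonalisation of $\gamma_0$, is strongly dominant. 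For $N$ large, $\rho(\gamma_0^N)$ then has a unique attracting fixed point $x^+\in\mathbb{P}(V_K)$ and a unique repelling projective hyperplane $H^-$, with $\rho(\gamma_0^N)$ contracting the complement of any neighbourhood of $H^-$ into any prescribed neighbourhood of $x^+$.

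The endgame is a standard ping-pong argument. Using Zariski density of $\Gamma$ in $H$, I would pick $g\in\Gamma$ such that the attractor/repeller pair $(g\cdot x^+, g\cdot H^-)$ of $\gamma_1:=g\gamma_0 g^{-1}$ is transverse to $(x^+,H^-)$; the set of $g$ for which this fails is a proper Zariski closed subset of $H$, so such $g$ exists. Replacing $\gamma_0,\gamma_1$ by sufficiently high powers, the attractor and repeller neighbourhoods in $\mathbb{P}(V_K)$ form disjoint ping-pong tables, and the classical ping-pong lemma yields that $\langle \gamma_0^M,\gamma_1^M\rangle$ is free of rank two for $M\gg 1$. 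The principal obstacle in this plan is the middle step: the construction of the absolute value and the proof that a very proximal element genuinely arises. This is where Tits' original argument deploys the arithmetic of finitely generated fields together with highest-weight combinatorics, and it is not a formal consequence of Zariski density alone.
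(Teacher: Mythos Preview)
The paper does not actually prove this theorem: it is a survey, and Tits' Alternative is merely quoted from \cite{MR0286898} with the one-line remark that ``the original proof uses the dynamics of the action of linear groups on the projective space where a ping-pong argument produces non-Abelian free groups.'' There is therefore no proof in the paper to compare against.

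That said, your outline is precisely the shape of Tits' original argument and is consistent with the paper's description: reduce to a finitely generated ground field, pass to the Zariski closure and factor out the solvable radical to land in a Zariski-dense subgroup of a semisimple group, manufacture a proximal element via a suitable completion of $k$ and an irreducible highest-weight representation, then run ping-pong on $\mathbb{P}(V_K)$ with a Zariski-generic conjugate. Your identification of the delicate point is also accurate: the existence of an absolute value on $k$ for which some element of $\Gamma$ has eigenvalues of distinct absolute value is the genuine technical core, and it is exactly where Tits invokes the arithmetic of finitely generated fields (specialisation lemmas ensuring that if all eigenvalues had absolute value $1$ at every place, the group would be virtually solvable). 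One small refinement: rather than asking for a ``strongly dominant'' highest weight, what you need is a representation on which your chosen semisimple element acts proximally (unique eigenvalue of maximal absolute value); Tits achieves this by passing to a suitable exterior or symmetric power, which is slightly more concrete than appealing to highest-weight theory in general. With that adjustment, your sketch is a faithful summary of the classical proof.
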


In positive characteristic, there is a slightly more general statement \cite[Theorem 2]{MR0286898}. This alternative has many interesting consequences. For example, any finitely generated linear group (in any characteristic) contains a free group or is virtually solvable. In particular, it has exponential growth or polynomial growth.

\begin{corollary}
A finitely generated amenable linear group is virtually solvable.
\end{corollary}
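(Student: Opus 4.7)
My plan is to deduce the corollary directly from the general dichotomy stated in the paragraph after Tits' alternative: \emph{any finitely generated linear group (in any characteristic) either contains a non-abelian free subgroup or is virtually solvable}. Given this, the only thing to show is that amenability of $\Gamma$ excludes the first horn of the alternative. The corollary then follows at once.

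The key steps, in order, are as follows. First, apply the characteristic-free version of Tits' alternative to the finitely generated linear group $\Gamma$, producing the dichotomy: either $\Gamma$ has a non-abelian free subgroup $F\leq\Gamma$, or $\Gamma$ is virtually solvable. Second, use the hereditary property of amenability: a (closed) subgroup of an amenable group is amenable. Since $\Gamma$ is discrete (as a subgroup of $\GL_n(k)$ viewed abstractly), every subgroup is automatically closed, so $F$ would inherit amenability from $\Gamma$. Third, recall the classical fact that the non-abelian free group $F_2$ is not amenable: this is witnessed by the paradoxical decomposition of $F_2$ (as mentioned in the paragraph on amenability in the excerpt), which rules out the existence of an invariant mean on $\ell^\infty(F_2)$. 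Combining these three observations rules out the first horn of the dichotomy, and therefore $\Gamma$ must be virtually solvable.

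There is no real obstacle here beyond knowing the three ingredients. The only mildly delicate point is that the original statement of Tits' alternative is formulated in characteristic zero, so one must invoke the sharper consequence noted just afterwards in the text (valid in any characteristic) in order to handle linear groups over arbitrary fields $k$. Everything else (subgroup-closedness of amenability, non-amenability of $F_2$) is completely standard and can be cited from the reference \cite[Appendix G]{MR2415834} given in the amenability subsection.
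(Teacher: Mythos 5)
Your proof is correct and is essentially the argument the paper intends: the paper states the corollary as an immediate consequence of the characteristic-free form of Tits' alternative together with the fact (noted just above in the same subsection) that free subgroups obstruct amenability. Your added care about discreteness and the positive-characteristic version of the alternative matches the paper's implicit reasoning, so there is nothing further to add.
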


The original proof uses the dynamics of the action of linear groups on the projective space where a ping-pong argument produces non-Abelian free groups. So it does not use the action of $\Gamma$ on the symmetric space or the Euclidean building (when $k$ is a valued field) $X$ associated to $\GL(n,k)$. 

Because of the importance of the Tits alternative for linear groups, it is natural to try to extend the result to other classes than linear groups. For example, there is a version of the alternative for hyperbolic groups \cite[Th\'eor\`eme 38]{MR1086648}. See the introduction of \cite{MR2164832} for other classes. Let us give two examples where the action on some CAT(0) space leads to an analog of Tits' alternative. We start with groups acting on CAT(0) cube complexes (see also \cite{MR2827012,Fernos:2015rz}).

\begin{theorem}[{\cite{MR2164832}}]\label{Titscc} Let $G$ be a group with a bound on the orders of finite subgroups. If $G$ acts properly on a finite-dimensional  CAT(0) cube complex then either $G$ contains a non-Abelian free subgroup or $G$ is a finitely generated virtually Abelian group.
\end{theorem}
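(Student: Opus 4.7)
The plan is to prove the contrapositive: assuming $G$ contains no non-Abelian free subgroup, deduce that $G$ is finitely generated and virtually Abelian. Let $X$ denote the finite-dimensional CAT(0) cube complex on which $G$ acts properly. After passing to the cubical barycentric subdivision, Haglund's theorem cited above guarantees that every element of $G$ acts as a semi-simple isometry of $X$: each $g\in G$ is either elliptic, with nonempty fixed-point set, or hyperbolic, translating a geodesic axis. The bounded finite-subgroup hypothesis, combined with the properness of the action, will be used throughout to keep the ``elliptic noise'' uniformly small.

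The strategy splits along hyperplane dynamics in the sense of Sageev. Say a hyperbolic element $g$ \emph{skewers} a hyperplane $H$ if some positive power $g^n$ sends $H$ strictly into one of the two half-spaces it bounds. The first case to treat is when $G$ contains two hyperbolic elements $g,h$ together with hyperplanes $H_1,H_2$ such that $g$ skewers $H_1$, $h$ skewers $H_2$, and the four resulting half-spaces can be arranged in the classical transverse (Schottky) configuration. Then a direct application of the ping-pong lemma to sufficiently high powers $g^N,h^N$ produces a non-Abelian free subgroup of $G$, contradicting our standing assumption; so no such transverse skewering pair can exist.

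In the complementary regime, I claim the hyperbolic elements of $G$ are all coaxial up to finite index. Passing to the Caprace--Sageev $G$-essential core to make the hyperplane dynamics clean, the absence of a transverse pair forces all hyperbolic axes to accumulate along a common direction at infinity. Passing to a finite-index subgroup $G_0\leq G$, these axes lie in a single $G_0$-invariant closed convex flat subspace $F\simeq\R^n\subseteq X$ via the Flat Torus Theorem. The induced action $G_0\to \Isom(F)$ is crystallographic, so by the Bieberbach Theorems its image is virtually $\Z^n$, and in particular finitely generated and virtually Abelian. The kernel of $G_0\to\Isom(F)$ consists of isometries acting trivially on $F$, hence lies in a common point stabiliser in $X$; by properness it is finite, and by the uniform bound on orders of finite subgroups its order is bounded. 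Combining these two facts, $G_0$ is finitely generated and virtually Abelian, and therefore so is $G$.

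The principal obstacle, as usual in Tits-type alternatives, lies in extracting a transverse pair of half-spaces from merely having two hyperbolic elements with ``independent'' dynamics: two such elements may share an axis, or have axes with the same pair of endpoints in the Roller boundary, and promoting independence of axes to a genuine ping-pong setup on half-spaces requires exploiting that hyperplanes are codimension-one convex separators of $X$ together with enough displacement of hyperplanes by $G$. The finite-dimensionality of $X$ is used twice in an essential way here: to guarantee semi-simplicity via Haglund, and to ensure that the combinatorial structure of hyperplanes is rich enough for the Caprace--Sageev essential core reduction to apply.
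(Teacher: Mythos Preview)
The paper is a survey and does not give a proof of this theorem; it simply quotes the result from \cite{MR2164832}. So there is no ``paper's proof'' to compare against, and your proposal must be judged on its own merits.

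Your outline has the right ingredients at the start (semi-simplicity via Haglund, ping-pong on half-spaces when a transverse skewering pair exists), but the complementary branch contains a genuine gap. From the mere absence of a transverse skewering pair you assert that, after passing to a finite-index subgroup $G_0$, the hyperbolic axes all lie in a single $G_0$-invariant flat $F\simeq\R^n$ ``via the Flat Torus Theorem''. But the Flat Torus Theorem takes as \emph{input} a free Abelian group of rank $n$ acting properly by semi-simple isometries and produces the flat; it does not manufacture an Abelian subgroup from a collection of axes that happen to point in related directions. You are invoking the conclusion you are trying to prove. Even granting an invariant flat $F$, your next step also fails: you call the induced action $G_0\to\Isom(F)$ crystallographic and apply Bieberbach, but crystallographic means discrete \emph{and cocompact}, and nothing in the hypotheses gives cocompactness on $F$ (the original action on $X$ is only proper). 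Without cocompactness the image in $\Isom(\R^n)$ need not be virtually $\Z^n$ at all.

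The actual argument in \cite{MR2164832} does not go through an invariant flat. It proceeds by induction on the dimension of the complex: either some hyperplane is not virtually stabilised, in which case one builds a ping-pong configuration and gets a free subgroup, or every hyperplane is virtually stabilised, in which case one reduces to the action of a finite-index subgroup on a hyperplane (a cube complex of strictly smaller dimension) and applies the inductive hypothesis. The bound on orders of finite subgroups is used to control the torsion that appears when passing between $G$ and the various finite-index subgroups and hyperplane stabilisers. Your sketch bypasses this inductive structure entirely, and the substitute you offer (Flat Torus plus Bieberbach) does not close.
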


The finite dimension assumption on the complex is very important. For example, the Thompson group $F$ has no non-Abelian free subgroups, is not virtually solvable but acts on a proper (infinite-dimensional) CAT(0) cube complex \cite{MR2393179}. It is an open problem to decide wether the Thompson group $F$ is amenable or not.

Another example is given by \cite[Th\'eor\`eme C]{MR2811600} which shows that the group of birational transformations of a compact K\"ahler complex surface satisfies the Tits alternative. This means that any finitely generated subgroup contains a non-abelian free subgroup or is virtually solvable. The proof relies on the construction of an action of the group of birational transformations on the infinite-dimensional hyperbolic space.

Since the Tits alternative holds  for linear groups and groups acting on CAT(0) cube complexes, which are two main sources of groups acting on CAT(0) spaces, it is natural to try to know if it holds more generally (see \cite[Problem 1.3]{MR2164832} and \cite[Introduction]{MR3072802}).

\begin{question} Is there a version of the Tits Alternative for groups acting on proper cocompact CAT(0) spaces?
\end{question}

After the flat torus and solvable subgroup theorems, the main tool to understand amenable groups is the Adams-Ballmann Theorem which generalises previous results for manifolds of non-positive curvature. See \cite[Theorem 2]{MR875344} and references in \cite{MR1645958}.
\begin{theorem}[{\cite{MR1645958}}]Let $X$ be a proper CAT(0) space. If $G$ is an amenable group acting continuously by isometries on $X$ then $G$ stabilises a flat subspace (possibly of dimension 0, that is a point) or fixes a point at infinity.
\end{theorem}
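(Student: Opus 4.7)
The plan is to exploit amenability together with the compactness of $\overline X = X \cup \partial X$ (which holds because $X$ is proper) to produce a $G$-invariant probability measure $\mu$ on $\overline X$, and then to associate to $\mu$ a convex function on $X$ whose behaviour under $G$ gives the desired dichotomy. First, $\Prob(\overline X)$ with the weak-$*$ topology is a compact convex subset of a locally convex space, and the induced affine action $G\action \Prob(\overline X)$ coming from the continuous action $G\action \overline X$ is continuous, so amenability furnishes a $G$-fixed $\mu\in \Prob(\overline X)$.

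Fix a basepoint $x_0\in X$ and, for each $\xi\in\overline X$, set $\psi_\xi(x)=d(x,\xi)-d(x_0,\xi)$ if $\xi\in X$, and $\psi_\xi(x)=\beta_\xi(x)$ (the Busemann function at $\xi$ normalised so that $\beta_\xi(x_0)=0$) if $\xi\in\partial X$. Each $\psi_\xi$ is convex, $1$-Lipschitz, vanishes at $x_0$, and satisfies the cocycle identity $\psi_\xi(gx)=\psi_{g^{-1}\xi}(x)+\psi_\xi(gx_0)$. Define
\[
f_\mu(x)\;=\;\int_{\overline X}\psi_\xi(x)\,d\mu(\xi).
\]
Then $f_\mu$ is convex and $1$-Lipschitz; combining the cocycle with $G$-invariance of $\mu$ gives $f_\mu(gx)-f_\mu(x)=\chi(g)$, where $\chi(g):=f_\mu(gx_0)$ defines a continuous homomorphism $\chi\colon G\to\R$.

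Now dichotomise on $\chi$. If $\chi\not\equiv 0$, pick $g\in G$ with $\chi(g)<0$, so $f_\mu(g^n x_0)\to -\infty$; by compactness of $\overline X$, $(g^n x_0)$ accumulates at some $\eta\in\partial X$, and one extracts, from the invariance of $\mu$, a \emph{genuine} $G$-fixed point in $\partial X$. If instead $\chi\equiv 0$, then $f_\mu$ is $G$-invariant. Either some minimising sequence for $f_\mu$ leaves every bounded set (and a limit in $\partial X$ is shown to be $G$-fixed, as above), or $f_\mu$ attains its infimum on a non-empty closed convex $G$-invariant set $M$. In the latter case, for every $\xi$ in the support of $\mu$ restricted to $\partial X$, the function $\beta_\xi$ must be constant along any geodesic inside $M$: otherwise, by convexity, one could strictly decrease $f_\mu$ at some point of $M$. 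The standard CAT(0) parallelism / flat-strip machinery (see \cite[II.2]{MR1744486}) then splits $M$ isometrically as a product $M_0\times\R^k$, whose Euclidean factor is $G$-invariant and is the flat we want; if $k=0$ and $M$ reduces to a point, this point is a $G$-fixed point in $X$, i.e.\ a $0$-dimensional flat.

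The main obstacle is the final step: turning the constancy of the Busemann functions on the minimum set into an honest Euclidean factor preserved by $G$, and, dually, in the case $\chi\not\equiv 0$, promoting the accumulation point of a single $g$-orbit to a point fixed by the entire group. Both require a careful interplay between the measure-theoretic invariance of $\mu$ (used to transport information between different directions at infinity) and the splitting theorems for CAT(0) spaces. The atoms of $\mu$ inside $X$ tend to force $M$ to be bounded, while the atoms at infinity are responsible for creating parallel directions along which $M$ can stretch; balancing these two effects is the delicate heart of the argument.
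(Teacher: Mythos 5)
The paper does not prove this statement: it is quoted verbatim from Adams--Ballmann \cite{MR1645958}, so your proposal can only be measured against that proof. Your opening moves are exactly theirs: compactness of $\overline X$ plus amenability yields a $G$-invariant $\mu\in\Prob(\overline X)$, the case $\mu(X)>0$ is disposed of separately (a bounded orbit, hence a fixed point by the Cartan fixed point theorem), and the weighted Busemann function $f_\mu$ with its additive character $\chi$ is the right object. But the two places you yourself flag as ``the main obstacle'' are precisely where the substance of \cite{MR1645958} lies, and your sketch does not close either of them.

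First, when $f_\mu$ has no minimum, an accumulation point of $(g^nx_0)$ is only fixed by $g$; the mechanism that produces a point fixed by all of $G$ is not ``invariance of $\mu$'' but the fact that the set of directions at infinity of the (nested, unbounded, $G$-quasi-invariant) sublevel sets of $f_\mu$ is a nonempty closed $G$-invariant subset of $\partial X$ of Tits radius at most $\pi/2$, together with the existence of a \emph{canonical} circumcentre for such a set. Making the circumcentre canonical when the radius is exactly $\pi/2$ is a genuine difficulty -- it is where properness (and, in the later generalisation, finite telescopic dimension) enters -- and is one of the main technical contributions of Adams and Ballmann. Second, your claim that $\beta_\xi$ must be \emph{constant} along geodesics of the minimum set $M$ for $\xi\in\operatorname{supp}\mu$ is wrong, and fatally so for your own argument: since $f_\mu$ is constant on $M$ and each $\beta_\xi$ is convex, a sum (integral) of convex functions being affine forces each $\beta_\xi$ to be \emph{affine} on $M$ ($\mu$-a.e.), and it is exactly the non-constant affine ones that split off the Euclidean factor $M\cong M_0\times\R^k$. (Consider $X=\R$, $G=\Z$ acting by translation, $\mu=\tfrac12(\delta_{+\infty}+\delta_{-\infty})$: here $f_\mu\equiv 0$, $M=\R$, and neither Busemann function is constant.) If all the $\beta_\xi$ were constant on $M$ there would be no flat to find, so the flat-strip machinery you invoke has nothing to act on as you have set it up. Finally, even granting the splitting, $G$ only permutes the parallel flats $\{p\}\times\R^k$; to produce a single $G$-invariant flat one must find a fixed point for the induced action on $M_0$, and Adams--Ballmann handle the resulting recursion with a Zorn's lemma argument on a poset of invariant closed convex subsets paired with flats. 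As it stands the proposal is a correct road map for the Adams--Ballmann proof rather than a proof.
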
 

The theorem has also been  proved when the space has finite telescopic dimension,  instead of being proper \cite{MR2558883}. Observe that the conclusion cannot hold without any assumption on the CAT(0) space. Edelstein exhibits an isometry of a Hilbert space of infinite dimension with no fixed point at infinity nor invariant (finite-dimensional) Euclidean subspace \cite[\S 2]{MR0164222}.

The Tits alternative gives a complete description of amenable linear groups. For sufficiently homogenous CAT(0) spaces, a description of amenable subgroups has been obtained. A locally compact group is \emph{locally elliptic}\index{locally elliptic!group}\index{group!locally elliptic} if it is an increasing union of compact subgroups. A locally compact group $H$ has a unique maximal normal locally elliptic subgroup, its \emph{locally elliptic radical}\index{locally elliptic!radical}. Let us denote it by $R(H)$.

\begin{theorem}[{\cite[Theorem A]{MR3072802}}]Let $X$ be a proper cocompact CAT(0) space. A closed subgroup $H\leq\Isom(X)$ is amenable if and only if the following equivalent conditions hold:
\begin{enumerate}
\item The identity component $H^\circ$ is solvable by compact,
\item $H^\circ R(H)$ is open in $H$,
\item $H/(H^\circ R(H))$ is virtually solvable.
\end{enumerate}
\end{theorem}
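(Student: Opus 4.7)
The overall strategy splits the proof into two main parts: establishing that amenability of $H$ is equivalent to condition (1) using the CAT(0) geometry, and then showing that conditions (1), (2), (3) are equivalent via locally compact group theory plus some input from the action on $X$.

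The implications "(1), (2), or (3) $\Rightarrow$ $H$ amenable" are immediate from standard permanence properties: solvable-by-compact groups are amenable, locally elliptic groups are directed unions of compact groups hence amenable, and amenability is preserved under extensions of closed normal subgroups by quotients. For the equivalences among (1)-(3), I would first show $(1) \Rightarrow (2)$: if $H^\circ$ has a compact normal subgroup $K$ with $H^\circ/K$ solvable (hence a solvable Lie group by the solution to Hilbert's fifth problem), then $K \subseteq R(H)$ and standard facts show $H^\circ R(H)$ is open. The implication $(2) \Rightarrow (3)$ is where the CAT(0) hypothesis intervenes: since $H^\circ R(H)$ is open, $H/(H^\circ R(H))$ is discrete and amenable, and its virtual solvability would be forced by a Tits-alternative-style constraint coming from the induced action on (a suitable quotient of) $X$. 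The implication $(3) \Rightarrow (1)$ follows by unwinding: $H^\circ$ sits inside $H^\circ R(H)$, modulo $R(H) \cap H^\circ$ (locally elliptic, hence compact-by-nothing in the connected case), and the solvable quotient gives the structure.

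For the main forward direction, I would apply the Adams-Ballmann theorem to the continuous isometric action of the amenable group $H$ on the proper CAT(0) space $X$: either $H$ stabilizes a flat $F \subseteq X$ of some finite dimension $n \geq 0$, or $H$ fixes a point $\xi \in \partial X$. In the flat case, properness of $X$ ensures the pointwise stabilizer $K$ of $F$ is compact; then $K$ is a compact normal subgroup of $H$ and $H/K$ embeds as a closed amenable subgroup of $\Isom(F) \cong \Or(n) \ltimes \R^n$. Classical structure theory for amenable closed subgroups of connected real Lie groups ensures the identity component is solvable-by-compact, and lifting this through the compact extension by $K$ yields condition (1).

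The boundary case is the main obstacle. Here one cannot proceed directly; instead I would invoke the structure results of Caprace-Monod for stabilizers of points at infinity in proper cocompact CAT(0) spaces: the stabilizer $\Isom(X)_\xi$ admits a canonical continuous Busemann homomorphism to $\R$, and its kernel acts on a natural horospheric or parallel-set quotient. An induction on the dimension (or telescopic dimension) of $X$, using the cocompactness to ensure the structure propagates, reduces the analysis to the flat case. The delicate point is that cocompactness of $\Isom(X) \curvearrowright X$ must be exploited at the level of $H$—not just the ambient group—to control the discrete quotient $H/(H^\circ R(H))$ and force genuine virtual solvability rather than mere amenability. This is where the full weight of the earlier Caprace-Monod structure theory on proper cocompact CAT(0) spaces, together with the canonical splitting of $X$ into a symmetric factor, a Euclidean factor, and a totally disconnected factor, would have to be brought to bear.
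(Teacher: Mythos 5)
First, note that the survey itself offers no proof of this statement --- it is quoted directly from Caprace--Monod [Theorem A, MR3072802] --- so your proposal can only be measured against the original source, and measured that way it has two genuine gaps. The first is logical. You treat (1), (2), (3) as individually sufficient, mutually equivalent conditions and set out to prove the cycle $(1)\Rightarrow(2)\Rightarrow(3)\Rightarrow(1)$. But the conditions are not mutually equivalent, and the statement has to be read as ``$H$ is amenable if and only if (1), (2) and (3) all hold.'' A nonabelian free group acting properly cocompactly on its Cayley tree is a closed subgroup $H\leq\Isom(X)$ with $X$ proper and cocompact; it satisfies (1) and (2) trivially (here $H^\circ$ and $R(H)$ are trivial and $H$ is discrete), yet it violates (3) and is not amenable. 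In particular your step $(2)\Rightarrow(3)$ is false, and the ``Tits-alternative-style constraint'' you invoke for it can only produce the dichotomy ``virtually solvable or contains a free subgroup,'' which is exactly what fails to exclude this example. The correct backward direction is simply that the conjunction of (1)--(3) exhibits $H$ as an iterated extension of amenable groups (compact, solvable, locally elliptic, discrete virtually solvable), hence amenable; no geometry is needed there.

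The second gap is in the forward direction, which is where all the content of the theorem lives. Your treatment of the flat case via Adams--Ballmann is fine in outline, but the boundary case is not a routine induction. When $H$ merely fixes some $\xi\in\partial X$, you propose to pass to a horospherical or transverse quotient and induct on dimension; however, the transverse space $X_\xi$ is in general neither proper nor cocompact, the kernel of the action of $H$ on it is not compact (unlike the pointwise stabiliser of a flat), and a fixed point at infinity by itself carries no amenability information, since parabolic subgroups of simple Lie groups fix points at infinity without being amenable. The actual proof must show that the iterated kernels arising from repeated passage to transverse spaces are topologically locally elliptic --- this is precisely how $R(H)$ enters conditions (2) and (3) --- and must control the strict decrease of geometric dimension of the spaces $X_{\xi_1,\dots,\xi_i}$, using cocompactness of $\Isom(X)\curvearrowright X$ at each stage; the geometric output is the invariant refined flat of Corollary G quoted later in this survey. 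Your sketch names the right objects but leaves this entire mechanism, which is the substance of the theorem, as an acknowledged black box.
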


This algebraic statement has also a geometric counterpart. To state it, let us introduce the transverse space to a point at infinity. Let $\xi$ be a point at infinity of some CAT(0) space $X$. One introduces a pseudo-metric on the geodesic rays $\rho$ such that $\rho(+\infty)=\xi$. For two such rays $\rho_1,\rho_2$,
$$d(\rho_1,\rho_2)=\inf_{t,s>0}d(\rho_1(t),\rho_2(s)).$$
After identifying rays at distance 0, one gets a metric space whose completion $X_\xi$ is a new CAT(0) space on which the stabiliser of $\xi$ in $\Isom(X)$ acts by isometries. This space is called the \emph{transverse space}\index{space!transverse}\index{transverse space} associated to $\xi$ \cite{MR1934160,MR2495801}.  

Let us illustrate this concept in some simple cases. If $X$ is a Euclidean space, a point at infinity is given by parallel geodesic rays, and two rays $\rho_1,\rho_2$ satisfy $d(\rho_1,\rho_2)=0$ if and only if there is $t_0\in\R$ such that $\rho_1(t)=\rho_2(t+t_0)$ for $t$ large enough. In particular, the transverse space to $\xi\in\partial X$ can be identified with an orthogonal space to some geodesic ray in the class of the point $\xi$. If $X$ is the hyperbolic plane, any two geodesic rays $\rho_1,\rho_2$ with a common endpoint at infinity satisfy $d(\rho_1,\rho_2)=0$ and the transverse associated to any point at infinity is reduced to a point.

For symmetric spaces, the construction goes back to Karpelevich \cite{MR0231321} and can be expressed in the following way. If $\xi$ is a point at infinity of a symmetric space $X$, the stabiliser of $\xi$ is a parabolic group $P_\xi$ and the transverse space $X_\xi$ is the product of a Euclidean space and another symmetric space which is the symmetric space associated to the semi-simple part of $P_\xi$ in a Levi decomposition (see \cite[\S2.217]{MR1441541} for the background required). The space $X_\xi$ can be isometrically embedded in $X$. If $\xi'\in\partial X$ is opposite to $\xi$ (that means $\xi,\xi'$ are the ends of a common geodesic line) then the union of geodesic lines with ends $\xi$ and $\xi'$ is a totally geodesic subspace which splits as $\R\times X_\xi$. The $\R$-factor is given by any line joining $\xi$ and $\xi'$. 

In that case the space $X_\xi$ strongly depends on $\xi$ (actually on the minimal cell of the spherical building at infinity containing it). For example, if $P_\xi$ is a  minimal parabolic then $X_\xi$ is a Euclidean space of dimension $\rank(X)-1$. Let $G$ be any semi-simple group of non-compact type. Then $G$ can be embedded (via the adjoint action) in $\SL_n(\R)$ for $n$ large enough and by the Mostow-Karpelevich  theorem \cite{MR0069829}, the symmetric space of $G$ embeds isometrically in the one of $\SL_n(\R)$. Actually, it lies as a factor in some transverse space at infinity (as soon as $G\neq\SL_n(\R)$).

This construction of a transverse space can be repeated and by induction, one constructs $X_{\xi_1,\dots,\xi_i}$ where $\xi_i\in\partial X_{x_1,\dots,x_{i-1}}$. A \emph{refined flat}\index{refined flat}\index{flat!refined} of $X$ is a flat in some $X_{\xi_1,\dots,\xi_i}$. With this refined construction, one has a geometric characterisation of amenable subgroups of $\Isom(X)$.

\begin{theorem}[{\cite[Corollary G]{MR3072802}}]Let $X$  be a proper cocompact CAT(0) space. A closed subgroup of $\Isom(X)$ is amenable if and only if it preserves a refined fat.
\end{theorem}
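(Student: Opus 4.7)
The plan is to prove both implications by a structural induction on the depth $k$ of the refined flat, combining the Adams-Ballmann theorem (both the proper version and the finite telescopic dimension version mentioned above) with the fact that horocyclic kernels in proper cocompact CAT(0) spaces are locally elliptic.

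For the direction ``preserves a refined flat $\Rightarrow$ amenable'', suppose $H$ stabilises a flat $F\subset X_{\xi_1,\dots,\xi_k}$. The restriction map $H\to\Isom(F)$ lands in $\Or(n)\ltimes\R^n$, which is amenable. I would argue the kernel is amenable by peeling off one refinement level at a time: an element that acts trivially on $X_{\xi_1,\dots,\xi_j}$ while fixing $\xi_j$ lies in the horocyclic subgroup associated with $\xi_j$ inside the stabiliser of $\xi_j$ in $\Isom(X_{\xi_1,\dots,\xi_{j-1}})$, and in a proper cocompact CAT(0) space that horocyclic subgroup is locally elliptic, hence amenable. Since amenability is closed under extensions by closed amenable normal subgroups, $H$ inherits amenability.

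For ``amenable $\Rightarrow$ preserves a refined flat'', I would iterate Adams-Ballmann. Applied to $H\action X$, it yields either a stabilised flat in $X$ (done, with $k=0$) or a fixed point $\xi_1\in\partial X$. In the latter case $H$ acts by isometries on the transverse space $X_{\xi_1}$, which is a complete CAT(0) space inheriting finite telescopic dimension from $X$, so the strengthened Adams-Ballmann theorem mentioned in the excerpt applies again; it produces either a flat stabilised in $X_{\xi_1}$ or a new fixed point $\xi_2\in\partial X_{\xi_1}$, and one continues inductively.

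The main obstacle is the termination of this iteration, because a transverse space need not have strictly smaller dimension than its parent. I would control it with a strictly decreasing complexity of the tower $X,X_{\xi_1},X_{\xi_1,\xi_2},\ldots$, the natural candidate being telescopic dimension combined with the rank of the Tits boundary, or equivalently the length of a descending chain of proper parabolic stabilisers. In the symmetric-space model this complexity is the length of a flag of parabolics, and the process terminates at a minimal parabolic whose transverse space is already a Euclidean flat; the general case is handled via the structure theory of stabilisers of points at infinity in proper cocompact CAT(0) spaces developed by Caprace--Monod. As a consistency check against the immediately preceding algebraic theorem, the conditions ``$H^\circ R(H)$ open'' and ``$H/(H^\circ R(H))$ virtually solvable'' are precisely what assembles to a compact-by-abelian action on a Euclidean flat at the bottom of the tower, with the successive horocyclic radicals accounting for the kernels of the passages through each transverse space.
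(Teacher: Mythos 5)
The survey offers no proof of this statement---it is quoted directly from \cite[Corollary G]{MR3072802}---so your sketch can only be measured against the original source. For the direction ``amenable $\Rightarrow$ preserves a refined flat'' your architecture (iterate Adams--Ballmann through successive transverse spaces) is the correct one, and the termination problem you flag is resolved more cleanly than you propose: a proper cocompact CAT(0) space has finite telescopic dimension, and the transverse space $X_\xi$ has telescopic dimension \emph{strictly} smaller than that of $X$. This strict drop is precisely the mechanism behind the finite-telescopic-dimension version of the Adams--Ballmann theorem \cite{MR2558883} and behind the induction in \cite{MR3072802}; no auxiliary complexity built from chains of parabolic stabilisers is needed (and none is available for a general proper cocompact $X$). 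What you do pass over is the need to check that the induced action of $H$ on each $X_{\xi_1,\dots,\xi_j}$ is continuous, so that the theorem is applied to $H$ itself rather than to a possibly non-closed image.

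The direction ``preserves a refined flat $\Rightarrow$ amenable'' contains a genuine gap. Your claim that the kernels arising at each level---elements fixing $\xi_j$ and acting trivially on the transverse space---are locally elliptic is false in the most basic examples. Already for $X=\mathbb{H}^2$ the transverse space $X_\xi$ is a point, so this kernel contains the horospherical subgroup $N\simeq\R$ of $\mathrm{Stab}(\xi)\leq\PSL_2(\R)$, a nontrivial connected group with no nontrivial compact subgroups, hence not locally elliptic; the same holds for the unipotent radical of any proper parabolic in a semisimple Lie group. These kernels \emph{are} amenable, but for two different reasons at the two extremes---nilpotency on the Lie side, local ellipticity on the totally disconnected side---and combining them requires first splitting $X\simeq M\times\R^n\times Y$ with $M$ symmetric and $\Isom(Y)$ totally disconnected as in \cite[Theorem 1.1]{MR2574740}, a step your setup does not provide. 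You also omit the Busemann character of $\mathrm{Stab}(\xi_j)$: an isometry fixing $\xi_j$ and acting trivially on $X_{\xi_1,\dots,\xi_j}$ may still translate along rays toward $\xi_j$, so the ``horocyclic'' part is only reached after quotienting by a further homomorphism to $\R$. Establishing the amenability of these successive kernels is the real content of this implication and cannot be dispatched by the single phrase ``locally elliptic.''
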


\subsection{Amenability at infinity}

Even if a group is not amenable, it may have \emph{amenable actions}\index{action!amenable}\index{amenable!action}. There are different notions of amenable actions. Here is the one we use, which is sometimes called \emph{topological amenability}\index{topological amenability}\index{amenability!topological}. The space $\Prob(G)$ of probability measures on a Hausdorff locally compact group $G$ is endowed with the weak*-topology as a subspace of the dual space of continuous functions with compact support on $G$ (see e.g. \cite[Chapter 7]{MR1681462}).

\begin{definition}\label{defamena} Let $B$ be a locally compact space and $G$ a locally compact group acting continuously by homeomorphims on $B$. The action $G\action B$ is \emph{amenable} if there is a sequence of continuous maps $\mu_n\colon B\to \Prob(G)$ such that
$$\|\mu_n(gb)-\mu_n\|\to0$$
uniformly on compact subsets of $G\times B$.
\end{definition}
The norm used here is the dual norm on $C_c(G)^*$. This coincides with the norm of total variation. 

The case we are interested in is when $B$ is compact. We refer to \cite{MR1926869,MR1799683}.
\begin{definition} A locally compact group $G$ is \emph{amenable at infinity}\index{amenable!at infinity}\index{group!amenable at infinity} if there is a compact space $B$ and an amenable $G$-action on $B$.
\end{definition}
For example the action of an amenable group on a point is amenable and thus amenable groups are amenable at infinity. For semi-simple algebraic groups over local fields $G$, a minimal parabolic group is amenable and $G/P$ is compact, thus $G$ is amenable at infinity. The amenability is \emph{at infinity} because $G/P$ can be identified with the set of maximal simplices of the spherical building at infinity. For a survey on this specific subject one can read \cite{MR2275659}. For discrete groups, amenability at infinity is equivalent to exactness and Property A. 
Another example is given by groups acting on trees: The action on the boundary at infinity is amenable. This is the origin of examples of amenability at infinity that we see in the remaining of this section. In rank less than 3, Euclidean buildings are not necessarily of algebraic origin but one still has amenability at infinity.

\begin{theorem}[{\cite{MR2680274}}]Let $X$ be a locally finite building. The action of $\Aut(X)$ on the combinatorial compactification of $X$ is amenable.
\end{theorem}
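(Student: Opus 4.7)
The plan is to exploit the fact that in a locally finite building the pointwise stabiliser of any finite set of chambers is a compact open subgroup of $\Aut(X)$, and then attach to each point of the combinatorial compactification $B$ a canonical exhaustion by such finite sets on which to average. Concretely, I will build a sequence of continuous maps $\mu_n\colon B\to\Prob(\Aut(X))$ by taking normalised Haar measures on these stabilisers and then verify the asymptotic equivariance demanded by Definition~\ref{defamena}.

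Fix a base chamber $c_0$. A point $\xi\in B$ is encoded, by construction of the combinatorial compactification, by a coherent family of combinatorial projections: for every spherical residue $R$ the datum of $\xi$ singles out a chamber $\mathrm{proj}_R(\xi)\in R$, and the assignment $\xi\mapsto\mathrm{proj}_R(\xi)$ is locally constant (hence continuous) on $B$. Let $\Sigma_n(\xi)$ be the finite set of chambers obtained by projecting $c_0$ along successive combinatorial balls of radius up to $n$ onto the ``sector'' determined by $\xi$. Let $K_n(\xi)\leq\Aut(X)$ be its pointwise stabiliser, a compact open subgroup, and set $\mu_n(\xi)$ to be its normalised Haar measure. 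Continuity of $\mu_n$ then follows from the local constancy of $\Sigma_n(\xi)$ in $\xi$, and the locally compact second countable structure on $\Aut(X)$ guarantees that this measure depends continuously on $\xi$ in the weak-$\ast$ topology on $\Prob(\Aut(X))$.

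To check asymptotic equivariance, for $g\in\Aut(X)$ and $\xi\in B$ I compare $g_\ast\mu_n(\xi)$, which is the Haar probability on $gK_n(\xi)g^{-1}$ (the stabiliser of $g\Sigma_n(\xi)$), with $\mu_n(g\xi)$, the Haar probability on the stabiliser of $\Sigma_n(g\xi)$. The total variation distance between these two measures is controlled by the relative Haar measure of the symmetric difference of the two stabilisers, which in turn is controlled by the combinatorial symmetric difference $g\Sigma_n(\xi)\,\triangle\,\Sigma_n(g\xi)$. Since $g$ displaces $c_0$ by a bounded amount, the two monotone galleries toward $g\xi$ (one starting at $c_0$, the other at $gc_0$) fellow-travel outside a finite initial segment, and this symmetric difference stays of bounded cardinality while $|\Sigma_n(\xi)|$ grows polynomially in $n$. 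The resulting Følner estimate gives $\|g_\ast\mu_n(\xi)-\mu_n(g\xi)\|\to 0$, uniformly in $(g,\xi)$ on compact subsets of $\Aut(X)\times B$.

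The main obstacle is the Følner property for the sectors $\Sigma_n(\xi)$: it requires identifying the asymptotic shape of the combinatorial projections onto the ``direction'' determined by $\xi$, and must accommodate the fact that $\xi$ may sit on a boundary stratum of $B$ where several residues collapse together (so that the ``dimension'' of $\Sigma_n(\xi)$ jumps). This is where the locally finite hypothesis and the combinatorial structure of the Davis complex of the underlying Coxeter system enter, via the fact that the Weyl group acts cocompactly on the apartment and every sector is Følner with respect to its own combinatorial metric. Once the Følner estimate is established, the verification of Definition~\ref{defamena} is a routine averaging argument.
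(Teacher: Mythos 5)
The survey does not actually prove this statement; it only cites L\'ecureux \cite{MR2680274}, so the closest in-paper model is the proof of Theorem \ref{amenableatinfinity}. Measured against either, your construction fails at its core mechanism: taking $\mu_n(\xi)$ to be the normalised Haar measure of the pointwise stabiliser $K_n(\xi)$ cannot yield the asymptotic invariance of Definition \ref{defamena}. The action of $g$ on $\Prob(\Aut(X))$ there is pushforward by \emph{left translation}, so $g\cdot\mu_n(\xi)$ is the uniform measure on the coset $gK_n(\xi)$, not on the conjugate $gK_n(\xi)g^{-1}$; for $g\notin K_n(g\xi)K_n(\xi)$ this coset is disjoint from $K_n(g\xi)$ and the total variation distance is exactly $2$. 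Even setting that aside, for two distinct compact open subgroups $H_1,H_2$ one has $\|m_{H_1}-m_{H_2}\|\geq 2\bigl(1-[H_1:H_1\cap H_2]^{-1}\bigr)\geq 1$, so ``the stabilisers differ by a bounded amount'' never makes this distance small. Haar measures on a \emph{shrinking} family of subgroups have no F{\o}lner behaviour; what is needed is a probability measure spread over a \emph{large, growing} set of group elements whose translates overlap proportionally.

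That is exactly what the actual proof does, in the same spirit as the paper's proof of Theorem \ref{amenableatinfinity}: one constructs finitely supported, asymptotically invariant densities $f^{\xi}_{n,c}$ on the countable discrete set of chambers, supported on the intersection of a combinatorial ball with the combinatorial sector $Q(c,\xi)$ and weighted (binomially) so that $\|f^{\xi}_{n,c}-f^{\xi}_{n,c'}\|/\|f^{\xi}_{n,c}\|\leq C\,d(c,c')/n$, and only then transfers to $\Prob(\Aut(X))$ by smearing each chamber over the compact open set of automorphisms carrying the base chamber to it, normalised by Haar measure and stabiliser volume. Your ``F{\o}lner for sectors'' heuristic is the right intuition for that chamber-level estimate, but (a) the measures must live on chambers rather than on stabiliser subgroups, and (b) the estimate itself is the real content of the theorem: in rank at least $2$ the sets $g\Sigma_n(\xi)$ and $\Sigma_n(g\xi)$ do \emph{not} differ by a set of bounded cardinality (sectors based at different chambers pointing to $\xi$ agree only up to a discrepancy that grows with $n$), so only the \emph{ratio} of the symmetric difference to the whole set tends to zero, and proving this requires the detailed combinatorics of projections onto residues. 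As written, the proposal both uses an unworkable family of measures and leaves the essential estimate unproved.
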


We denote by $\partial_R X$ the Roller boundary of a CAT(0) cube complex $X$. In \cite{MR2490224} it is proved that discrete groups acting metrically properly on a finite-dimensional CAT(0) cube complex are amenable at infinity. Actually the proof leads to the following slightly more general statement.

\begin{theorem}\label{amenableatinfinity}Let $X$ be a finite-dimensional CAT(0) cube complex and $G$  a discrete group acting on $X$ by automorphisms with finite vertex stabilisers. The action of $G$ on the compact space $\partial_R X$  is amenable.
\end{theorem}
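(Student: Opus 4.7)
The goal is to construct a sequence of continuous maps $\mu_n\colon \bo_R X \to \Prob(G)$ satisfying the almost-equivariance of Definition \ref{defamena}. My strategy is to first build auxiliary probability measures $\nu_n(\xi)$ on the vertex set $X^0$ using the median/wall structure of $X$, and then transfer them to $\Prob(G)$ through the orbit map of a basepoint; the finite-stabiliser hypothesis is exactly what is needed to make this transfer well-behaved without assuming local finiteness of $X$.

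Fix $v_0\in X^0$. A point $\xi\in\overline X^R$ is by definition a consistent orientation of hyperplanes, so the set $\mathcal H(v_0,\xi)$ of hyperplanes on which $v_0$ and $\xi$ disagree is well defined, and the median interval $[v_0,\xi]$ consists of the vertices $w$ with $\mathcal H(v_0,w)\subseteq\mathcal H(v_0,\xi)$. I would define $\nu_n(\xi)\in\Prob(X^0)$ as the uniform distribution on a canonically chosen finite set of vertices of $[v_0,\xi]$ at combinatorial distance $n$ from $v_0$, chosen by a procedure that only reads off finitely many hyperplane orientations of $\xi$, so that $\xi\mapsto\nu_n(\xi)$ is continuous in the Roller topology. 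The crucial comparison estimate is the identity
\[
|\mathcal H(v_0,\xi)\,\triangle\,\mathcal H(gv_0,\xi)| \;=\; d_1(v_0,gv_0),
\]
where $d_1$ is the combinatorial (edge-path) metric on $X^0$. As $g$ ranges over a compact, hence finite, subset $K\subset G$, the right-hand side is uniformly bounded, so up to a discrepancy of bounded cardinality the supports of $g\cdot\nu_n(\xi)$ and $\nu_n(g\xi)$ agree, while the chosen $n$-step level set in $[v_0,\xi]$ grows with $n$. Together with finite-dimensionality of $X$ (which bounds the local combinatorial complexity of cube configurations and hence the measure of any single coherent family of corners), this yields $\|g\cdot\nu_n(\xi)-\nu_n(g\xi)\|_{\ell^1(X^0)}\to 0$ uniformly on $K\times\bo_R X$.

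The transfer to $G$ is the step where the finite-stabiliser hypothesis enters explicitly. After choosing a transversal of $G\backslash X^0$, each vertex $w=hv_0$ in the support of $\nu_n(\xi)$ (or of the analogous measure on the orbit containing $w$) is replaced by the uniform probability measure on its finite preimage $\{h'\in G:h'v_0=w\}=hG_{v_0}$, and the resulting signed sum is renormalised to define $\mu_n(\xi)\in\Prob(G)$. Because each fibre $hG_{v_0}$ is finite, this pull-back is a bounded operator on $\ell^1$, so it preserves continuity in $\xi$ and transports the almost-equivariance from $\nu_n$ to $\mu_n$.

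The main obstacle I expect is the continuity step: producing a canonical, finitely-supported selection of $n$-step representatives in $[v_0,\xi]$ that genuinely depends continuously on $\xi\in\overline X^R$. Without local finiteness there can be infinitely many hyperplanes adjacent to $v_0$, so one cannot naively pick "the first $n$ hyperplanes" of $\mathcal H(v_0,\xi)$; the selection must instead exploit the median structure and the hypothesis $\dim X<\infty$ to produce a choice that only reads finitely many coordinates of $\xi$ at a time. A secondary subtlety, absent from \cite{MR2490224} because metric properness there implicitly supplied local finiteness, is to verify that the orbit-wise transfer $\nu_n\mapsto\mu_n$ commutes (up to a controlled error) with the $G$-action on the Roller boundary; this comes down to tracking the basepoint change $v_0\rightsquigarrow gv_0$ through the canonical choice of level-$n$ vertices, for which the estimate above is tailored.
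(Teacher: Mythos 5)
Your overall architecture---measures on the vertex set supported in the interval $[v_0,\xi]$, indexed continuously by $\xi\in\partial_R X$, then pushed to $\Prob(G)$ via the orbit map with the finite-stabiliser hypothesis guaranteeing finite fibres---is exactly the architecture of the paper's proof, and your identification of where finiteness of stabilisers enters (replacing metric properness in \cite{MR2490224}) is correct. The continuity issue you flag as the ``main obstacle'' is in fact the easy part: the right functions are finitely supported and depend on only finitely many hyperplane orientations of $\xi$, hence are locally constant on $\partial_R X$.

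The genuine gap is in the central estimate. You take $\nu_n(\xi)$ to be the uniform measure on a set of vertices of $[v_0,\xi]$ at combinatorial distance exactly $n$ from $v_0$, and claim that the supports of $g\cdot\nu_n(\xi)$ and $\nu_n(g\xi)$ agree up to a discrepancy of bounded cardinality. This is false: those two supports are level-$n$ sets about the \emph{different} basepoints $gv_0$ and $v_0$, and they are generically disjoint. In a tree (dimension $1$) the level set is a single vertex, so both measures are Dirac masses at distinct points and their $\ell^1$ distance is identically $2$; in the standard cubulation of $\R^2$ with $\xi$ the ``all-positive'' Roller point and $gv_0=v_0+(1,0)$, the two spheres $\{i+j=n\}$ and $\{(i-1)+j=n\}$ are disjoint sets of size $n+1$, so the symmetric difference grows linearly and the uniform measures again stay at distance $2$. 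Your correct identity $|\mathcal H(v_0,\xi)\,\triangle\,\mathcal H(gv_0,\xi)|=d_1(v_0,gv_0)$ bounds the defect between the \emph{intervals}, not between level sets at a fixed height, and no canonical selection of sphere representatives can repair this. What is needed is a F\o lner-type averaging over the whole of $[v_0,\xi]\cap B(v_0,n)$ with carefully chosen weights: the paper uses the functions of \cite{MR2490224},
$$f_{n,x}^z(y)=\binom{\,n-d(x,y)+\delta_z(y)\,}{\delta_z(y)}\quad\text{for }y\in[x,z],\qquad \delta_z(y)=N-|\mathfrak{N}_z(y)|,$$
whose total $\ell^1$-norm is independent of $x$ and $z$ and which satisfy the Lipschitz estimate $\|f_{n,x}^z-f_{n,x'}^z\|\le \frac{2\,d(x,x')N}{n+N}\|f_{n,x}^z\|$; finite-dimensionality enters through $N$ in this bound, not through a bound on ``local combinatorial complexity'' as you suggest. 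With these weighted functions in place of your uniform sphere measures, your transfer step to $\Prob(G)$ goes through essentially as you describe and matches the paper's $\phi_n^z(g)=\sum_{x\in T}f_{n,x_0}^z(gx)/|G_x|$.
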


\begin{proof}We use the notations of \cite{MR2490224} and we adapt the proof of Theorem 4.2 there. First we make the notation of $f_{n,x}$ dependent of $z\in \partial_R X$. Thus we denote by $f_{n,x}^z$ the function defined by

$$f_{n,x}^z(y)=\begin{cases}
\binom{n-d(x,y)+\delta_z(y)}{\delta_z(y)}&\textrm{if } y\in[x,z]\\
0&\textrm{if } y\notin[x,z]
\end{cases}$$
where $\delta_z(y)=N-|\mathfrak{N}_z(y)|$ ($N$ is the dimension of the complex). As in \cite{MR2490224}, this function is finitely supported and thus $\ell^1$. For $g\in\Aut(X)$, the equivariance relation is $g\cdot f_{n,x}^z=f^{gz}_{n,gx}$. It is proved that 

$$\frac{\|f_{n,x}^z-f_{n,x'}^z\|}{\|f_{n,x}^z\|}\leq\frac{2d(x,x')N}{n+N}.$$

Moreover, it is shown that if $z_i\to z$ then $f^{z_i}_{n,x}$ coincides with $f^{z}_{n,x}$ for $i$ large enough and we get the following continuity property: for fixed $n,x$, the map $z\mapsto f^z_{n,x}$ is locally constant. 

Now, define $\phi^z_n=\sum_{x\in T} \frac{f^z_{n,x_0}(gx)}{|G_x|}$ where $T$ is a transversal of $G\action X^0$ ($X^0$ is the vertex set of $X$) and $G_x\leq G$ the isotropy group of $x$. One has that $\phi^z_n$ is finitely supported and $\|\phi^z_n\|=\|f_{n,x_0}^z\|$ (which actually does not depend on $z$ nor on $x_0$).

For $z\in\partial_R X$ and $g\in G$,

\begin{align*}
\|g\cdot\phi_n^z-\phi_n^{gz}\|&=\sum_{h\in G}|\phi_n^z(g^{-1}h)-\phi^z_n(h)|\\
&\leq\sum_{h\in G,\ x\in T}\frac{|f_{n,x_0}^z(g^{-1}hx)-f^z_{n,x_0}(hx)|}{|G_x|}\\
&\leq \sum_{h\in G,\ x\in T}\frac{|f_{n,gx_0}^{gz}(hx)-f^z_{n,x_0}(hx)|}{|G_x|}\\
&\leq \sum_{x\in X}|f_{n,x_0}^z(x)-f_{n,x_0}^{gz}(x)|\\
&\leq\|f_{n,x_0}^z-f_{n,x_0}^{gz}\|\\
&\leq 2\,d(x_0,gx_0)\frac{N}{n+N}\|\phi_n^z\|
\end{align*}

Defining $m^z_n=\frac{	\phi^z_n}{\|\phi^z_n\|}$, all properties of the Definition \ref{defamena} are satisfied and $G\action \partial_R X$ is amenable.
\end{proof}


More generally in the spirit of \cite[Corollary 2.9]{MR2275659} (and also \cite{MR2243738}):

\begin{theorem}\label{amenableatinfinity2}Let $X$ be a finite-dimensional CAT(0) cube complex and $G$  a countable group acting on $X$ by automorphisms with amenable vertex stabilisers. The action of $G$ on the compact space $\partial_R X$  is amenable.
\end{theorem}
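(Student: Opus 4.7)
The plan is to run the proof of Theorem~\ref{amenableatinfinity} verbatim, replacing each factor $1/|G_x|$ --- which required $G_x$ finite --- by an averaging against a F\o lner set in the (now merely amenable) stabiliser $G_x$. Fix a base vertex $x_0\in X^0$, a transversal $T\subset X^0$ for $G\curvearrowright X^0$, and for every $y\in X^0$ a coset representative $t_y\in G$ with $t_y\cdot x(y)=y$, where $x(y)\in T$ is the orbit representative of $y$. Since $G_x$ is countable and amenable for each $x\in T$, it admits a left F\o lner sequence $(F^x_k)_{k\ge 1}$ of finite subsets, so that $|hF^x_k\triangle F^x_k|/|F^x_k|\to 0$ for every $h\in G_x$. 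Then set
\[
\phi^z_{n,k}(g)\;=\;\sum_{x\in T}\frac{f^z_{n,x_0}(gx)\,\mathbf{1}_{F^x_k}\!\bigl(t_{gx}^{-1}g\bigr)}{|F^x_k|},
\]
with $f^z_{n,x}$ as in the preceding proof. Only those $x\in T$ with $gx$ in the finite support of $f^z_{n,x_0}$ contribute, so $\phi^z_{n,k}\in\ell^1(G)$ is finitely supported. Decomposing the sum over $g$ along the cosets $\{g:gx=y\}=t_yG_x$ yields $\|\phi^z_{n,k}\|_1=\|f^z_{n,x_0}\|_1$, and local constancy of $z\mapsto f^z_{n,x_0}$ transfers to $z\mapsto\phi^z_{n,k}$ (for fixed $n,k$).

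\textbf{Equivariance estimate.} Repeating the telescoping computation of Theorem~\ref{amenableatinfinity} with these new weights produces, for every $g_0\in G$,
\[
\|g_0\cdot\phi^z_{n,k}-\phi^{g_0z}_{n,k}\|\;\le\;\|f^{g_0z}_{n,g_0x_0}-f^{g_0z}_{n,x_0}\|\;+\;\sum_{y\in X^0}f^{g_0z}_{n,x_0}(y)\,\frac{|\gamma(y)^{-1}F^{x(y)}_k\triangle F^{x(y)}_k|}{|F^{x(y)}_k|},
\]
where $\gamma(y):=t_{g_0^{-1}y}^{-1}g_0^{-1}t_y\in G_{x(y)}$ is the cocycle recording the failure of $y\mapsto t_y$ to be $G$-equivariant. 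The first summand is bounded by $\frac{2Nd(x_0,g_0x_0)}{n+N}\|f^{g_0z}_{n,x_0}\|$ by the inequality already established in Theorem~\ref{amenableatinfinity}, so, after normalising by $\|\phi^z_{n,k}\|=\|f^z_{n,x_0}\|$, it tends to $0$ as $n\to\infty$ uniformly in $z\in\partial_RX$ and in $g_0$ ranging over any compact (hence finite) subset of $G$. The second summand is a convex combination of F\o lner-type errors.

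\textbf{Diagonal choice.} Fix a compact (finite) $K\subset G$. For each $n$, the supports $\mathrm{supp}(f^{g_0z}_{n,x_0})$ for $g_0\in K$ and $z\in\partial_RX$ all sit inside a ball $B(x_0,R_n)$ whose intersection with the union $\bigcup_{\xi\in\partial_RX}[x_0,\xi]$ is a finite set of vertices (by the control of combinatorial intervals in a finite-dimensional CAT(0) cube complex which is already invoked in the preceding proof). Only finitely many $G$-orbits meet this set, and for each orbit representative $x\in T$ involved the set
\[
E^x_n\;=\;\{\gamma(y):y\in Gx,\;y\in\mathrm{supp}(f^{g_0z}_{n,x_0})\text{ for some }g_0\in K,\,z\in\partial_RX\}\subset G_x
\]
is finite. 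Pick $k(n)$ so that $|eF^x_{k(n)}\triangle F^x_{k(n)}|/|F^x_{k(n)}|<1/n$ for every $e\in E^x_n$ and every such $x$. Then the second summand is $<(1/n)\|f^{g_0z}_{n,x_0}\|$ uniformly, so $\mu_n(z):=\phi^z_{n,k(n)}/\|\phi^z_{n,k(n)}\|$ defines the required sequence of continuous maps $\partial_RX\to\Prob(G)$ witnessing amenability.

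\textbf{Main obstacle.} The genuine novelty compared with Theorem~\ref{amenableatinfinity} lies in the replacement of the uniform measure on $G_x$ by F\o lner averaging, together with the extraction of a diagonal $k(n)$ that makes the resulting error uniform in $z\in\partial_RX$. This in turn hinges on the finiteness of the sets $E^x_n$: one must check that, as $z$ sweeps through the compact space $\partial_RX$ and $g_0$ runs through the finite set $K$, only finitely many twisting elements $\gamma(y)\in G_x$ appear. This is the same finiteness of combinatorial intervals used in the proof of Theorem~\ref{amenableatinfinity} and is where the finite-dimensionality of the cube complex is essential; once it is in hand, the F\o lner property of $(F^x_k)$ closes the argument.
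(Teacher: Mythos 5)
Your proposal is correct in substance but proceeds along a genuinely different route from the paper. The paper keeps the approximately equivariant family at the level of the vertex set: it takes $\mu_n^z=f^z_{n,x_0}/\|f^z_{n,x_0}\|\in\Prob(X^0)$, notes the same estimate $\|g\mu_n^z-\mu_n^{gz}\|\le 2\,d(x_0,gx_0)\frac{N}{n+N}$, and then invokes the general transfer principle \cite[Proposition 2.7]{MR2275659} (with $Y$ a point) to pass from a countable $G$-set with amenable stabilisers to amenability of $G\action\partial_R X$. You instead unwind that transfer principle by hand: you replace the weights $1/|G_x|$ of Theorem \ref{amenableatinfinity} by averages over F\o lner sets $F^x_k\subset G_x$, introduce coset representatives $t_y$ and the twisting elements $\gamma(y)=t_{g_0^{-1}y}^{-1}g_0^{-1}t_y\in G_{x(y)}$, and close the estimate with a diagonal choice $k(n)$. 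Your telescoping bound and the norm computation $\|\phi^z_{n,k}\|_1=\|f^z_{n,x_0}\|_1$ are correct. What the paper's route buys is brevity and the absence of any bookkeeping with coset sections; what yours buys is a self-contained argument that makes visible exactly where amenability of the stabilisers (via F\o lner sets) and the geometry enter. The one point you should tighten is the justification of finiteness of the sets $E^x_n$: the set $B(x_0,R_n)\cap\bigcup_{\xi\in\partial_R X}[x_0,\xi]$ need \emph{not} be finite for a non-locally-finite complex (an infinite star of rays already gives infinitely many vertices at distance $1$ lying on intervals to boundary points), so ``control of combinatorial intervals'' is not the right reason. The correct reason, available from the preceding proof, is that $z\mapsto f^z_{n,x_0}$ is locally constant, hence takes only finitely many values on the compact space $\partial_R X$, so $\bigcup_{z\in\partial_R X,\,g_0\in K}\mathrm{supp}\bigl(f^{g_0z}_{n,x_0}\bigr)$ is finite; equivalently, compactness of $\partial_R X$ in the paper's sense already forces local finiteness of the complex, after which balls of vertices are finite. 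With that substitution your diagonal extraction, and hence the whole argument, goes through.
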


\begin{proof} Up to considering a $G$-invariant convex subcomplex of $X$, we assume that $X$ has countably many vertices. We directly use \cite[Proposition 2.7]{MR2275659} where the countable $G$-space is the space of vertices of $X$ and the compact $G$-space is the Roller boundary $\partial_R X$. Fix a base vertex $x_0$ and for $z\in\partial_R X$, define $\mu_n^z$ to be the probability measure $f_{n,x_0}^z/\|f_{n,x_0}^z\|$. A similar computation as in the proof of Theorem \ref{amenableatinfinity} shows that $\|g\mu_n^z-\mu_n^{gz}\|\leq 2\,d(x_0,gx_0)\frac{N}{n+N}$ and thus we can apply directly \cite[Proposition 2.7]{MR2275659} with $Y$ being a point (which is a $G$-space with trivial action and an amenable space for any stabiliser of point in $X$). We get that $G\action \partial_R X$  is amenable.
\end{proof}
\begin{corollary}\label{amenhigman}The Higman group acts amenably on the Roller boundary of its square complex.
\end{corollary}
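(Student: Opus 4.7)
The plan is to realize this as a direct application of Theorem \ref{amenableatinfinity2} (or equivalently Theorem \ref{amenableatinfinity}). First I would recall the standard construction: the Higman group $H = \langle a_1,a_2,a_3,a_4 \mid a_i a_{i+1} a_i^{-1} = a_{i+1}^2,\ i \in \Z/4\Z\rangle$ has a presentation complex whose universal cover $X$ is a 2-dimensional nonpositively curved square complex on which $H$ acts freely and cocompactly by deck transformations. In particular $X$ is a CAT(0) cube complex of dimension $2$, and all vertex stabilisers are trivial, hence amenable.

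Next I would verify that the hypotheses of Theorem \ref{amenableatinfinity2} are met: $H$ is finitely generated (in particular countable), $X$ is a finite-dimensional CAT(0) cube complex, and the action of $H$ on $X$ is by cubical automorphisms with amenable (indeed trivial) vertex stabilisers. Applying the theorem then yields that the action $H \curvearrowright \partial_R X$ on the Roller boundary is amenable, which is exactly the claim.

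The only step requiring any care is the verification that the natural square complex for the Higman group is actually CAT(0): one checks the Gromov link condition, i.e.\ that every vertex link is a flag simplicial complex. For the standard presentation complex of $H$, this is a classical observation (the links are certain bipartite graphs with girth at least $4$), and simple connectedness passes to the universal cover. Once this is in hand, no further work is needed: the corollary is a clean specialization of Theorem \ref{amenableatinfinity2} to a free cocompact action on a $2$-dimensional CAT(0) cube complex.
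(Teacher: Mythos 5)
There is a genuine gap here, and it lies precisely at the point your argument leans on most heavily. The ``square complex'' of the Higman group referred to in the corollary is not the universal cover of the presentation complex: the defining relators $a_ia_{i+1}a_i^{-1}a_{i+1}^{-2}$ have length $5$, so the presentation complex has pentagonal $2$-cells, not squares. The relevant object is Martin's CAT(0) square complex (\cite{Martin:2015qy}), built from the decomposition of the Higman group as an amalgam of the four subgroups $\langle a_i,a_{i+1}\rangle\cong BS(1,2)$. On that complex the action is cocompact but emphatically \emph{not} free and not even metrically proper: the vertex stabilisers are conjugates of these Baumslag--Solitar subgroups, hence infinite. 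Your claim that ``all vertex stabilisers are trivial'' is therefore false for the complex the corollary is about, and with it the claim that the action is free and cocompact.

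This is not a cosmetic slip, because it is exactly the point of the corollary. If the action were free (or had finite stabilisers, or were metrically proper), one could invoke Theorem \ref{amenableatinfinity} or directly the result of \cite{MR2490224}, and Theorem \ref{amenableatinfinity2} would be unnecessary; the remark following the corollary states explicitly that \cite{MR2490224} does not suffice here because the vertices have infinite stabilisers. The correct derivation is: the vertex stabilisers are isomorphic to $BS(1,2)$, which is metabelian, hence amenable; the complex is $2$-dimensional and the group is countable; so Theorem \ref{amenableatinfinity2} applies and gives amenability of the action on $\partial_R X$. Your overall strategy (specialise Theorem \ref{amenableatinfinity2}) is the right one, but you must verify amenability of the genuinely infinite stabilisers rather than assert their triviality, and you should identify the complex correctly before checking any link condition.
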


\begin{remark} The Higman group is known to be exact because it has finite asymptotic dimension. Corollary \ref{amenhigman} yields another proof of this fact. Observe that \cite{MR2490224} is not sufficient because the action of the Higman group on its complex is not metrically proper (vertices have infinite stabilisers).
\end{remark}

\subsection{Haagerup property and property (T)}\label{Haagerup}

There is an another weakening of amenability related to actions on some  CAT(0) spaces: the Haagerup property.

\begin{definition}A locally compact group has the \emph{Haagerup property}\index{Haagerup!property}\index{property!Haagerup} if it admits a continuous and metrically proper action by isometries on a Hilbert space.
\end{definition}

Any continuous action of a group $\Gamma$ on a Hilbert space $\mathcal{H}$ is given by a continuous homomorphism $\alpha\colon \Gamma\to\Or(\mathcal{H})$ and a continuous cocycle $b\colon\Gamma\to \mathcal{H}$ (that is $b(gh)=\alpha(g)b(h)+b(g)$ for all $g,h\in\Gamma$). The action is \emph{metrically proper}\index{action!metrically proper}\index{metrically proper action} if $\| b(g)\|$ goes to infinity when $g$ leaves all compacta of $\Gamma$.

With a convenient characterisation of amenability, it is not difficult to show that amenable groups have the Haagerup property \cite{MR1388307}. Originally, the Haagerup property was defined in terms of mixing unitary representations. It was shown that for second countable groups, the Haagerup property is equivalent to Gromov's a-(T)-menability, that is, the definition we gave. For a panorama and characterisations of groups with the Haagerup property see \cite{MR1852148}. 

The name that Gromov gave to this property comes from the strong opposition between the Haagerup property and property (T). Actually, a group with both properties is compact.

\begin{definition} A second countable locally compact group has \emph{property (T)}\index{property!(T)} if any continuous isometric action on a Hilbert space has fixed points.
\end{definition}

A good reference about property (T) is \cite{MR2415834}. In the algebraic world, the situation is well understood. 

\begin{theorem}\label{HaaT} Let $k$ be a local field and $G$ be the $k$-points of a connected almost simple algebraic group $k$. Let $r$ be the $k$-rank of $G$.
\begin{itemize} 
\item If $r\geq2$ then $G$ has property (T) (\cite[Theorem 1.6.1]{MR2415834}). 
\item If $r=1$ and $k$ is non-Archimedean then $G$ has the Haagerup property.
\item If $r=1$ and $k=\C$ then $G$ is locally isomorphic to $\PSL_2(\C)\simeq \SO^+(3,1)$ and has the Haagerup property.
\item If $r=1$ and $k=\R$, the two properties occur: $\SO(n,1)$ and $\SU(n,1)$ have the Haagerup property but $\Sp(n,1)$ and $\F_4^{-20}$ have property (T). 
\end{itemize}
\end{theorem}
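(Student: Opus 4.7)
The statement is an assembly of four classical theorems, so my plan is to treat the cases separately and point to the tool doing the work in each.

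For $r \ge 2$, I would follow Kazhdan's original argument. The key input is relative property (T) for the pair $(\SL_2(k) \ltimes k^2,\, k^2)$, proved by Mackey analysis: any unitary representation of the semidirect product with almost invariant vectors yields an $\SL_2(k)$-invariant probability measure on the Pontryagin dual $\widehat{k^2}$, and the absence of any such measure on $\widehat{k^2}\setminus\{0\}$ forces the trivial character to be an atom, hence the existence of $k^2$-invariant vectors. Any almost $k$-simple $G$ of $k$-rank $\ge 2$ contains copies of $\SL_2(k) \ltimes k^2$ arising from rank-$2$ root subsystems ($A_2$, $B_2$, $G_2$), and one verifies $G$ is generated by finitely many of these, so relative property (T) propagates to property (T) for $G$.

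For $r = 1$ with $k$ non-Archimedean, $G$ acts properly and cocompactly on its Bruhat--Tits tree $T$ (a rank-one Euclidean building). The standard orientation cocycle $g \mapsto c_g \in \ell^2$ of oriented edges, where $c_g$ is the signed indicator of the edge-path from a base vertex $x_0$ to $gx_0$, yields a continuous affine isometric action whose squared norm equals the tree-distance and is therefore metrically proper. The case $k = \C$ is handled in one line: only $\PSL_2(\C) \simeq \SO^+(3,1)$ appears, and being locally isomorphic to a real rank-one group, it falls under the next case (and the Haagerup property passes to locally isomorphic groups up to passing between closed subgroups and quotients).

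For $r = 1$ with $k = \R$, the classification of real rank-one almost simple Lie groups yields exactly the four families $\SO(n,1)$, $\SU(n,1)$, $\Sp(n,1)$ and $\F_4^{-20}$. For $\SO(n,1)$ and $\SU(n,1)$ I would produce a proper affine isometric action on a Hilbert space via the complementary series: their matrix coefficients tend to zero at infinity and, via standard Hilbertification, one extracts a proper $1$-cocycle; an equivalent geometric recipe is to check that the heat kernel on $\mathbb{H}_\R^n$ or $\mathbb{H}_\C^n$ defines a proper conditionally negative-definite kernel on $G$. For $\Sp(n,1)$ and $\F_4^{-20}$ I would invoke Kostant's theorem, in which the spherical unitary dual is computed explicitly and the trivial representation is shown to be isolated. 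The main obstacle is unquestionably Kostant's case: since $\Sp(n,1)$ and $\F_4^{-20}$ are rank one, no higher-rank trick applies, and the argument must enter the representation theory of the minimal parabolic and establish vanishing of first cohomology in the critical range of the complementary series, which is precisely where property (T) parts company from the rank of the symmetric space and becomes an intrinsically analytic phenomenon.
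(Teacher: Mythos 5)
Your proposal is correct in outline, but it takes a genuinely different route from the paper, which --- being a survey --- does not actually prove the theorem: it cites \cite[Theorem 1.6.1]{MR2415834} for the higher-rank case and, for the Haagerup cases, only sketches the geometric mechanism, namely that the Bruhat--Tits tree is a space with walls and that real hyperbolic $n$-space is a space with \emph{measured} walls (walls being totally geodesic hypersurfaces, with a Crofton formula relating the distance between two points to the measure of the walls separating them); the passage from (measured) walls to a proper affine isometric action on an $L^2$-space of half-spaces is then spelled out at the end of the same section. Your treatment is instead the classical analytic one: relative property (T) for $(\SL_2(k)\ltimes k^2,\,k^2)$ propagated through rank-two root subsystems for $r\geq 2$, the signed edge cocycle on the tree for non-Archimedean rank one, conditionally negative definite kernels for $\SO(n,1)$ and $\SU(n,1)$, and Kostant's determination of the spherical unitary dual for $\Sp(n,1)$ and $\F_4^{-20}$. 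The analytic route has the advantage of covering $\SU(n,1)$, which the measured-walls picture as stated in the paper does not reach (complex hyperbolic space has no totally geodesic real hypersurfaces when $n\geq 2$, so the Crofton argument does not transpose verbatim); the walls picture, on the other hand, is exactly what the paper needs for its subsequent characterisation of both properties via actions on spaces with measured walls. One small correction to your sketch: for $\SO(n,1)$ and $\SU(n,1)$ the proper conditionally negative definite kernel is the hyperbolic distance function itself (Faraut--Harzallah), not the heat kernel; the heat kernel is positive definite and does not directly furnish the proper cocycle.
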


This theorem shows that knowing that $G$ acts on a symmetric space or a Euclidean building (or more generally on a CAT(0) space) does not suffice to conclude it has one or the other property. Many groups with property (T) act on CAT(0) polyhedral cell complexes without fixed point (see for example \cite{MR1946553,MR1465598}) although we will characterise this property via fixed points on some CAT(0) spaces. 

Nonetheless, one can explain how the Haagerup property occurs in Theorem \ref{HaaT}. If the rank is 1 and $k$ is non-Archimedean, $G$ acts properly on the associated  Euclidean building which is a tree, thus a space with walls. The groups $\SO(n,1)$ act on the real hyperbolic space of dimension $n$ which has a natural structure of a space with measured walls. Walls are given by totally geodesic hypersurfaces and a Crofton formula shows that the distance between two points is proportional to the measure of walls separating them \cite[Proposition 2.6.4]{MR2415834}. The role of spaces with measured walls is explained below.

The definitions of the two properties involve Hilbert spaces which are completely flat. Nevertheless these properties can be characterised by actions on negatively curved spaces.

\begin{proposition} Let $X$ be the real or complex hyperbolic space of infinite dimension and let $\Gamma$ be a second countable locally compact group.

\begin{itemize}
\item $\Gamma$ has the Haagerup property if and only if $\Gamma$ has a metrically proper action by isometries on $X$.
\item $\Gamma$ has property (T) if and only if any action of $\Gamma$ on $X$ has fixed points.
\end{itemize}
\end{proposition}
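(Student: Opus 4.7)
The plan is to reduce both equivalences to the Delorme--Guichardet characterizations---that a second countable locally compact group has property (T), resp.\ the Haagerup property, if and only if every, resp.\ some, continuous affine isometric action on a Hilbert space has a fixed point, resp.\ is metrically proper---using two geometric bridges between affine isometric actions on Hilbert spaces and isometric actions on $X=\mathbb{H}^\infty_\R$ or $\mathbb{H}^\infty_\C$.

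The first bridge is a parabolic encoding. In the upper half-space model $X\simeq\mathcal{H}\times\R_{>0}$ with $ds^2=t^{-2}(\|dx\|^2+dt^2)$ (and its Siegel-domain analog in the complex case), the subgroup of $\Isom(X)$ fixing the point at infinity $\xi_\infty$ corresponding to $t\to+\infty$ and preserving each horosphere $\{t=\mathrm{const}\}$ consists exactly of the maps $(x,t)\mapsto(\alpha x+b,t)$ with $\alpha\in\Or(\mathcal{H})$ and $b\in\mathcal{H}$. Hence any continuous affine isometric $\Gamma$-action $g\cdot x=\alpha(g)x+b(g)$ on $\mathcal{H}$ extends canonically to a continuous isometric $\Gamma$-action on $X$ fixing $\xi_\infty$. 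A point $(x_0,t_0)$ is $\Gamma$-fixed under the extension iff $x_0$ is fixed by the affine action, and the identity $\cosh d_X((0,1),(b,1))=1+\|b\|^2/2$ ensures that metric properness also transfers in both directions.

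The second bridge is conditional negative definiteness. The hyperbolic distance $d_X$ is a conditionally negative definite kernel on $X$ (classical after Faraut--Harzallah in finite dimensions); the GNS construction then provides a continuous $\Isom(X)$-equivariant embedding $\psi\colon X\hookrightarrow\mathcal{K}$ into a Hilbert space with $\|\psi(x)-\psi(y)\|^2=d_X(x,y)$, and the isometric $\Isom(X)$-action on $X$ extends to an affine isometric $\Isom(X)$-action on $\mathcal{K}$. Consequently, any continuous isometric $\Gamma$-action on $X$ yields a continuous affine isometric $\Gamma$-action on $\mathcal{K}$ which is proper exactly when the $X$-action is proper.

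With both bridges in hand, the Haagerup equivalence is immediate: Haagerup provides a proper affine action on $\mathcal{H}$, hence via the first bridge a proper action on $X$, and conversely a proper action on $X$ yields via the second bridge a proper affine action on $\mathcal{K}$, hence Haagerup. For the (T) equivalence, if every $X$-action has a fixed point, then each affine action on $\mathcal{H}$ extends via the first bridge to an $X$-action with a fixed point, descending to a fixed point in $\mathcal{H}$, and Delorme--Guichardet gives (T). Conversely, if $\Gamma$ has (T), the affine $\Gamma$-action on $\mathcal{K}$ coming from the second bridge has a fixed point, so every $\Gamma$-orbit in $\mathcal{K}$ is bounded; since $\|\psi(gx_0)-\psi(x_0)\|^2=d_X(gx_0,x_0)$, the $\Gamma$-orbit of $x_0$ in $X$ is also bounded, and the Cartan fixed point theorem produces a $\Gamma$-fixed point in $X$. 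The delicate ingredient---and in my view the main obstacle---is the conditional negative definiteness of $d_X$ underlying the second bridge; once this Hilbert-space embedding is established and its infinite-dimensional limit verified by approximating with finite-dimensional totally geodesic subspaces, the remaining arguments are routine.
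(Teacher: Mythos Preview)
The paper states this proposition without proof, so there is no argument in the paper to compare against. Your proof is correct and follows what is in fact the standard route: the two bridges you describe---embedding affine Hilbert actions into $\Isom(X)$ via the horospherical (parabolic) subgroup, and embedding $X$ equivariantly into a Hilbert space via the conditional negative definiteness of $d_X$ (Faraut--Harzallah)---are exactly the classical ingredients, and you combine them with Delorme--Guichardet and the Cartan fixed point theorem in the right way.

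One minor remark: your first bridge is written out cleanly only for the real case. In $\mathbb{H}^\infty_\C$ the horospheres carry a Heisenberg rather than a Euclidean structure, so the sentence ``the subgroup \dots\ consists exactly of the maps $(x,t)\mapsto(\alpha x+b,t)$'' is not literally correct there. The quickest fix is to observe that $\mathbb{H}^\infty_\R$ sits inside $\mathbb{H}^\infty_\C$ as a totally geodesic subspace with $\Isom(\mathbb{H}^\infty_\R)\hookrightarrow\Isom(\mathbb{H}^\infty_\C)$; the real first bridge then already produces the required action on $\mathbb{H}^\infty_\C$, and both properness and the transfer of fixed points survive because the nearest-point projection onto the real slice is $\Gamma$-equivariant and $1$-Lipschitz. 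Your identification of the Faraut--Harzallah conditional negative definiteness as the crux is spot on: it is precisely what distinguishes the real and complex hyperbolic cases from the quaternionic and Cayley cases, where $\Sp(n,1)$ and $\F_4^{-20}$ have property~(T).
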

The two properties are stable under passing to lattices: if $\Gamma$ is a lattice in $G$ then $G$ has the Haagerup property (respectively property (T)) if and only if $\Gamma$ has the same property (see \cite[Theorem 1.7.1]{MR2415834} and \cite[Proposition 6.1.5]{MR1852148}). But neither the Haagerup property nor property (T)  are geometric, that is, invariant under quasi-isometries. See \cite{Carette:2014qy} and \cite[3.6]{MR2415834}. Nonetheless, one can give combinatorial/geometric characterisations using actions on \emph{spaces with measured walls}. We have seen what is a space with walls where points are separated by a finite number of walls. There is a generalisation of this notion where there is a measure on the set of walls and points are separated by a set of walls of finite measure.

\begin{theorem}[{\cite[Theorem 1.3]{MR2671183}}]Let $G$ be a locally compact second countable group.
\begin{enumerate}
\item The group $G$ has property (T) if and only if any continuous action by automorphisms on a space with measured walls has bounded orbits.
\item The group G has the Haagerup property if and only if it admits a proper continuous action by automorphisms on a space with measured walls.
\end{enumerate}
\end{theorem}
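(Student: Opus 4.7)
The plan is to establish a correspondence between continuous actions by automorphisms on spaces with measured walls and continuous affine isometric actions on Hilbert spaces, under which bounded orbits (resp.\ proper actions) on one side correspond to bounded orbits (resp.\ proper actions) on the other. Once such a dictionary is in place, both statements are immediate: property (T) is, by the Delorme--Guichardet theorem, the statement that every continuous affine isometric action on a Hilbert space has bounded orbits (equivalently a fixed point), and the Haagerup property is by definition the existence of a metrically proper such action.

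For the direction from measured walls to Hilbert actions, I would proceed as follows. Let $(X,\mathcal{W},\mu)$ be a space with measured walls and fix a base point $x_0\in X$. For each wall $W\in\mathcal{W}$, orient it so that the half-space $W_+^{x_0}$ is the one containing $x_0$. Define
\[
\psi\colon X\longrightarrow L^2(\mathcal{W},\mu),\qquad \psi(x)=\mathbf{1}_{\mathcal{W}(x,x_0)},
\]
where $\mathcal{W}(x,y)$ denotes the (by assumption, finite-$\mu$-measure) set of walls separating $x$ and $y$. A direct computation gives $\|\psi(x)-\psi(y)\|^2=\mu(\mathcal{W}(x,y))=d_\mathcal{W}(x,y)$, so $d_\mathcal{W}$ is a conditionally negative definite kernel on $X$. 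The action of $G$ on $\mathcal{W}$ by permutations gives an orthogonal representation $\pi$ of $G$ on $L^2(\mathcal{W},\mu)$, and $b(g):=\psi(gx_0)-\psi(x_0)$ is a continuous $\pi$-cocycle; the associated affine action is continuous because of the second-countability hypothesis on $G$ and the continuity assumption on the wall action. One has $\|b(g)\|^2=d_\mathcal{W}(gx_0,x_0)$, so orbits of the affine Hilbert action are bounded iff the orbit $Gx_0\subset X$ has bounded wall-diameter, and the action on the Hilbert space is metrically proper iff the action on the wall space is.

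For the converse direction, I would start with a continuous affine isometric action $\alpha$ of $G$ on a Hilbert space $\mathcal{H}$ with cocycle $b\colon G\to\mathcal{H}$, and build a measured wall structure whose associated negative definite kernel coincides with $(g,h)\mapsto \|b(g)-b(h)\|^2$. Take the underlying set to be $\mathcal{H}$ (or just the orbit $Gb(e)$), and index walls by oriented affine hyperplanes $H_{\xi,t}=\{v\in\mathcal{H}:\langle v,\xi\rangle=t\}$ with $\xi$ a unit vector and $t\in\R$. In finite dimensions the product of Lebesgue measure on $\R$ with a suitably normalised rotation-invariant measure on the unit sphere yields, via the Crofton-type identity recalled in \cite[Proposition 2.6.4]{MR2415834}, a wall measure such that $\mu(\mathcal{W}(v,w))=C\|v-w\|^2$. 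In infinite dimensions one approximates by finite-dimensional subspaces (or uses a Gaussian measure on $\mathcal{H}$), taking a projective/inductive limit; the translation-invariance of the construction ensures that $G$ acts by automorphisms of the resulting measured wall space. Properness and boundedness translate back immediately because $d_\mathcal{W}(gb(e),b(e))$ is proportional to $\|b(g)\|^2$.

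The main obstacle is the reverse construction in infinite dimensions, where there is no Haar measure on $\mathcal{H}$ to average against. The clean way around this is to avoid building an ambient measure on $\mathcal{H}$ entirely and instead mimic the Robertson--Steger style construction directly on the orbit: having fixed that $(g,h)\mapsto\|b(g)-b(h)\|^2$ is conditionally negative definite on $G$, one only needs \emph{some} measured wall space on which $G$ acts and which realises this kernel up to a constant factor, which can be produced through a Poisson-type realisation of the negative definite function. Verifying the measurability of $\mathcal{W}(x,y)$, the continuity of the $G$-action, and finiteness of $d_\mathcal{W}$ on orbits are then routine, and they yield the two equivalences simultaneously.
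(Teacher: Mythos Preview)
The paper is a survey and does not give a proof of this cited theorem; it only illustrates one implication in the special case of trees, and then remarks that ``the relevant Hilbert space is the $L^2$-space on half-spaces.'' So there is little to compare against beyond that hint, but that hint already points at a genuine gap in your forward construction.

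In your passage from measured walls to Hilbert space you work on $L^2(\mathcal{W},\mu)$ and set $b(g)=\psi(gx_0)$ with $\psi(x)=\mathbf{1}_{\mathcal{W}(x,x_0)}$, claiming this is a $\pi$-cocycle for the permutation representation on walls. It is not. One needs $\psi(gx)=\pi(g)\psi(x)+b(g)$ for all $x$; evaluating at a wall $W$ separating $x_0$ from $gx_0$ gives $\pm 1$ with a sign depending on which side $gx$ lies, so the expression depends on $x$. What \emph{is} correct is that $\|\psi(x)-\psi(y)\|^2=\mu(\mathcal{W}(x,y))$, so $d_\mathcal{W}$ is a $G$-invariant conditionally negative definite kernel; from there the GNS construction produces an affine isometric action with $\|b(g)\|^2=d_\mathcal{W}(gx_0,x_0)$, but on a different Hilbert space and with a different linear part than the one you wrote. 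The paper's tree sketch shows the direct fix: pass to \emph{oriented} half-spaces and put in the sign $\varepsilon_g(e)=\pm1$ recording orientation; then the cocycle identity does hold, and this is exactly what generalises to measured walls.

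In the reverse direction there is a factual slip and a real obstacle. The Crofton formula for affine hyperplanes gives $\mu(\mathcal{W}(v,w))=C\,\|v-w\|$, not $C\,\|v-w\|^2$: for fixed unit $\xi$ the set of separating $t$ has Lebesgue measure $|\langle v-w,\xi\rangle|$, and integrating over the sphere is linear in $\|v-w\|$. This slip is harmless for the conclusion (properness and boundedness transfer either way), but it should be corrected. The serious point is the infinite-dimensional construction. There is no rotation-invariant finite or $\sigma$-finite measure on the unit sphere of $\mathcal{H}$, no Lebesgue measure, and a ``Gaussian measure on $\mathcal{H}$'' does not live on $\mathcal{H}$; your suggestions of inductive limits or a ``Poisson-type realisation'' are placeholders rather than arguments. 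In the reference this is handled by showing that measured-wall kernels are exactly the $L^1$-embeddable (``measure-definite'') invariant kernels and then exhibiting the Hilbert distance as such a kernel via an explicit embedding of $\mathcal H$ into an $L^1$-space (e.g.\ through a Gaussian process), not by putting a wall measure directly on affine hyperplanes of $\mathcal{H}$. As written, your reverse implication is a sketch of an idea rather than a proof.
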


Let us explain a bit this theorem in a simple case: groups acting on trees. If a group $G$ acts on a tree $T$ then one gets an isometric action on a associated Hilbert space $\mathcal{H}$. If $G$ has a fixed point on $\mathcal{H}$ then $G$ has a fixed point in $T$. If the action on $T$ is metrically proper then the action on $\mathcal{H}$ is also metrically proper.

Let $E$ be the set of oriented edges of $T$ and let $\mathcal{H}=L^2(E)$, that is, the Hilbert space with orthonormal basis $\{\delta_e\}_{e\in E}$. If $G$ acts on $E$, it permutes edges and thus one gets an orthogonal representation of $G$ on $\mathcal{H}$. Fix a vertex $v_0$ and for any vertex $v$ denote by $[v,v_0]$ the finite set of edges between $v_0$ and $v$. Now define $b(g)=\sum_{e\in[gv_0,v_0]}\varepsilon_g(e)\delta_e$ where $\varepsilon_g(e)=\pm1$ is positive if $e$ points from $gv_0$ to $v_0$  and negative otherwise. It is not difficult to check that $b$ is actually a cocycle and that the associated isometric action on $\mathcal{H}$ has the above properties.

If one removes an edge from a tree one gets two connected components and thus a partition of the set of vertices in two parts. So the set of unoriented edges yields a structure of a space with walls on the set of vertices of a tree. The distance between two vertices is exactly the number of walls separating them.

One can generalise this construction from trees to spaces with (measured) walls. The relevant Hilbert space is the $L^2$-space on half-spaces.
\section{Rank rigidity and the flat closing conjectures}\label{rrc}
\subsection{Rank Rigidity Conjecture}

A \emph{hyperbolic isometry}\index{isometry!hyperbolic}\index{hyperbolic!isometry} is a semi-simple isometry with positive translation length.
\begin{definition} Let $X$ be a CAT(0) space. A \emph{rank one} isometry is a hyperbolic isometry such that none of its axes is the boundary of an isometrically embedded Euclidean half-plane.
\end{definition}

Rank one isometries are very interesting because of the dynamics at infinity they induce. There is a so-called \emph{north-south dynamics}. If $g$ is a rank-one isometry and $X$ is proper then $g$ has exactly two fixed points at infinity $\xi_-,\xi_+\in\bo X$ and for any pair of neighbourhoods $V_\pm$ of $\xi_\pm$, there is $k\in\N$ such that $g^k(\bo X\setminus V_-)\subset V_+$ \cite[Lemma 4.4]{MR2581914}.

\begin{conjecture}[Rank Rigidity Conjecture \cite{MR2478464}] Let $X$ be an irreducible geodesically complete and proper CAT(0) space. If $\Gamma$ is a countable group acting properly and cocompactly on $X$ then $X$ is a symmetric space of non-compact type and rank at least 2, a Euclidean building of dimension at least 2 or $\Gamma$ contains a rank one isometry.
\end{conjecture}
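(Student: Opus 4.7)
The plan is to proceed by contradiction: assume $\Gamma$ contains no rank one isometry, and show $X$ must be a symmetric space of non-compact type of rank $\geq 2$ or a thick Euclidean building of dimension $\geq 2$. The starting dichotomy is: every hyperbolic isometry $g\in\Gamma$ either is rank one, or its axes bound isometrically embedded flat half-planes. Under our assumption, we are always in the second case for every hyperbolic element. Cocompactness together with geodesic completeness will be used to ensure an ample supply of hyperbolic elements (so that the action is ``non-elementary''); in particular, by a $\Gamma$-double density argument on $\partial X$, one can produce enough axes to conclude that the Tits rank of $X$ is at least $2$.

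The next step is to promote ``no rank one'' to rich flat structure on the boundary. First I would upgrade the flat-half-plane data from individual axes to global flats: given two transverse hyperbolic elements $g,h\in\Gamma$ with axes $\ell_g,\ell_h$, use the flat half-plane on each axis and a Hausdorff-convergence / limit of flat strips argument to produce a $2$-flat $F\subset X$ whose boundary circle in $\partial_T X$ joins a fixed point of $g$ to a fixed point of $h$. Iterating this inside the Tits boundary, one obtains that every pair of points in $\partial_T X$ is joined by a Tits geodesic of length $\leq\pi$, and that spheres (apartments) are plentiful. The goal of this step is to show that the Tits boundary $\partial_T X$ is a \emph{thick spherical building}, in the metric sense of Kleiner–Leeb.

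With a spherical building structure on $\partial_T X$ established, apply a recognition theorem (in the spirit of Leeb's rigidity for CAT(0) spaces with building boundary, combined with the Caprace–Monod structure theorems quoted in the excerpt) to conclude that $X$ is isometric to a product of symmetric spaces and Euclidean buildings. Irreducibility of $X$ then forces it to be a single symmetric space or a single Euclidean building, and the rank lower bound $\geq 2$ comes from the initial Tits-rank estimate. Once this is done, the only remaining option to reconcile with the assumption ``no rank one isometry in $\Gamma$'' is precisely that $X$ is higher rank symmetric or a higher dimensional building, as desired.

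The main obstacle is the middle step: producing a spherical building structure on $\partial_T X$ from the mere absence of rank one elements. In the Riemannian (Hadamard manifold) case this is Ballmann–Burns–Spatzier's theorem, whose proof uses the smoothness of Jacobi fields along flat strips to propagate parallel flats; in the CAT(0) cube complex case it is Caprace–Sageev, using hyperplane combinatorics and a double-skewering lemma. For a general geodesically complete, proper, cocompact CAT(0) space, neither tool is directly available, and one must find a substitute ``flat propagation'' mechanism. The cleanest substitute I can see would be to combine the Adams–Ballmann theorem (to control amenable stabilizers of boundary points) with Caprace–Monod's splitting theory to bootstrap local flat strips into globally parallel flats, but making this argument tight in the absence of any smoothness is precisely where the conjecture remains open.
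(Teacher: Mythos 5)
This statement is a \emph{conjecture}: the paper offers no proof of it, and to date none exists. It records the theorem of Ballmann, Burns and Spatzier for Hadamard manifolds and the Caprace--Sageev theorem for finite-dimensional CAT(0) cube complexes as the known cases, and leaves the general geodesically complete proper cocompact case open. So there is nothing in the paper to compare your argument against, and your proposal cannot be judged as a proof --- which, to your credit, you acknowledge in your final paragraph.

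Concretely, the gap sits exactly where you locate it, and it is not a technical loose end but the entire content of the problem. Your first step (every non-rank-one axis bounds a flat half-plane, hence Tits rank at least $2$) already requires care: deducing that the Tits boundary has diameter $\pi$ from the absence of rank one elements is essentially the open Ballmann--Buyalo circle of ideas, and even granting $\mathrm{diam}(\partial_T X)=\pi$, the implication ``Tits diameter $\pi$ and irreducible $\Rightarrow$ symmetric space or Euclidean building'' is itself a conjecture, not a theorem one may invoke. Your middle step --- propagating flat half-planes into a thick spherical building structure on $\partial_T X$ --- is precisely what the smooth Jacobi-field arguments achieve for manifolds and what hyperplane skewering achieves for cube complexes; the Adams--Ballmann theorem and the Caprace--Monod splitting theory, which you propose as substitutes, control amenable subgroups and de Rham decompositions but give no mechanism for producing new flats from old ones. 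Once a building structure at infinity is in hand, the recognition step (Leeb, Caprace--Monod) is indeed available, so your outline correctly identifies both the shape of a hoped-for proof and the single missing ingredient; but as a proof of the statement it is incomplete, and necessarily so.
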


This conjecture comes from the following theorem for Hadamard manifolds.

\begin{theorem}[Rank Rigidity Theorem \cite{MR908215,MR819559}]\index{rank rigidity theorem}\index{theorem!rank rigidty}Let $X$ be an irreducible Hadamard manifold with a countable group $\Gamma$ acting properly and cocompactly by isometries. Then, either $M$ is 	a symmetric space of non-compact type and rank at least 2 or $\Gamma$ contains a rank one isometry.
\end{theorem}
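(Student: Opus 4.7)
The plan is to follow the dichotomy suggested by a pointwise notion of rank. First I would define the \emph{rank} of a complete geodesic $\gamma \subset X$ as the dimension of the space of parallel Jacobi fields along $\gamma$, equivalently the maximal dimension of a flat strip containing $\gamma$; this is a lower semi-continuous, $\Gamma$-invariant function on the unit tangent bundle $T^1 X$. The rank of $X$ is the minimum value of this function, and by cocompactness of $\Gamma$ this minimum is attained: either $X$ has rank one, or every geodesic has rank at least $2$.

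In the rank-one case I would upgrade the existence of a single rank-one geodesic to the existence of a rank-one element in $\Gamma$. The set of rank-one vectors in $T^1 X$ is open and $\Gamma$-invariant, and its image in $\Gamma\backslash T^1 X$ is open and non-empty. Combining recurrence of the geodesic flow on the compact quotient with a closing-lemma argument (using the contraction of transverse Jacobi fields along a rank-one geodesic: the absence of a flat half-plane forces genuinely negative sectional curvature in some transverse direction), one produces an element $g \in \Gamma$ with an axis that is itself rank-one. The openness of the rank-one condition ensures that small perturbations of a rank-one axis remain rank-one, which is what makes the closing argument go through.

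If instead every geodesic in $X$ has rank at least $2$, I would show that $X$ is a symmetric space of non-compact type of rank at least $2$. For a geodesic $\gamma$ the parallel set $P(\gamma)$ splits isometrically as $\R \times F(\gamma)$, and higher rank forces $F(\gamma)$ to be positive-dimensional and itself to contain flats; iterating, one obtains a rich family of maximal flats in $X$. Passing to the boundary with Tits' angular metric, these maximal flats provide apartments for a spherical Tits building structure on $\partial X$, whose simplicial structure is read off the singular directions. Irreducibility of $X$ transfers to irreducibility of this Tits building, and since its rank is at least $2$, the Burns--Spatzier rigidity theorem applies: such a building must be Moufang. From the Moufang data one reconstructs a semi-simple real Lie group $G$ acting transitively on $X$ and identifies $X = G/K$ with its symmetric space.

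The main obstacle is this last reconstruction step. Verifying the Moufang condition requires producing enough root group symmetries by holonomy along singular foliations, which in turn uses smoothness of parallel transport and the Jacobi equation; these are precisely the tools available in the Riemannian setting but absent in a purely CAT(0) one, which is why the same strategy does not yet settle the general Rank Rigidity Conjecture stated above. A secondary technical point is ensuring in the rank-one case that the rank-one geodesic produced by recurrence is \emph{periodic}, not merely recurrent, so that its translation really is realised by some $g \in \Gamma$; this is where the discreteness of $\Gamma$ and the openness of the rank-one locus combine most delicately.
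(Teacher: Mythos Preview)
The paper is a survey and does not supply a proof of this theorem; it is quoted from the original sources (Ballmann, and Burns--Spatzier) and the reader is referred to Ballmann's book for details. So there is no in-paper proof to compare against. That said, your outline is broadly the classical two-step strategy of those references: the dichotomy on the geodesic rank, a closing-type argument in the rank-one case, and the Tits-building/Moufang reconstruction in the higher-rank case.

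Two corrections are worth flagging. First, the rank function on $T^1X$ is \emph{upper} semi-continuous, not lower: a limit of parallel Jacobi fields is parallel, so $\{\operatorname{rank}\geq k\}$ is closed. Fortunately this is exactly what you need, since it makes the rank-one locus $\{\operatorname{rank}=1\}$ open, as you use later. Second, your heuristic that ``the absence of a flat half-plane forces genuinely negative sectional curvature in some transverse direction'' is not correct as stated and is not how the closing argument actually runs. Ballmann's argument does not produce hyperbolic contraction from pointwise curvature; rather, it uses that a rank-one geodesic is \emph{nonwandering} modulo $\Gamma$ (density of such vectors follows from a duality argument), and then an Anosov-type closing lemma adapted to rank-one geodesics, where the contraction is expressed through the behaviour of horospheres and the flat-strip theorem rather than through sectional curvature bounds. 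Your identification of the delicate step---passing from a recurrent rank-one geodesic to a $\Gamma$-periodic one---is spot on, but the mechanism you describe for it is not the right one.
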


For a more detailed discussion about the rank rigidity theorem, we refer to \cite[Theorem C]{MR1377265}. Beyond the world of manifolds, the conjecture has been proved for a large class of singular CAT(0) spaces.

\begin{theorem}[Rank rigidity theorem for CAT(0) cube complexes {\cite[Theorem A]{MR2827012}}] Let $X$ be a finite-dimensional CAT(0) cube complex and $\Gamma$ a group acting minimally by isometries on $X$. If $\Gamma$ has no fixed point in $\overline{X}$ then $X$ splits as the product of two cube sub-complexes or $\Gamma$ contains a rank one isometry.
\end{theorem}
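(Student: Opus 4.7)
The plan is to follow the strategy of Caprace--Sageev, working combinatorially with the pocset of half-spaces. The key dichotomy is whether $X$ contains a \emph{strongly separated} pair of hyperplanes: a pair of disjoint hyperplanes $\hat h_1,\hat h_2$ such that no third hyperplane crosses both. If yes, I produce a rank one isometry; if not, I produce a product splitting.

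The first step is to establish the \emph{Double Skewering Lemma} as a consequence of the hypotheses. Since $\Gamma$ acts minimally without fixed point in $\overline{X}$, the induced action on the Roller boundary has no fixed half-space at infinity, and a standard argument (combining the fact that $\Gamma$-orbits of a half-space $h$ are coinitial among half-spaces with minimality) yields: for every strictly nested pair of half-spaces $h_2\subsetneq h_1$, there exists $g\in\Gamma$ with $g h_1\subsetneq h_2$. In particular such $g$ is a hyperbolic isometry, and any of its CAT(0) axes crosses both hyperplanes $\hat h_1$ and $\hat h_2$.

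In the first case, assume $\hat h_1,\hat h_2$ is a strongly separated pair with nested associated half-spaces $h_2\subsetneq h_1$. Apply the skewering lemma to obtain $g\in\Gamma$ with $g h_1\subsetneq h_2$, so $g$ is hyperbolic with an axis $\ell$ skewering $\{\hat h_1,\hat h_2\}$. I then claim $g$ is rank one. Suppose for contradiction that $\ell$ bounds an isometrically embedded Euclidean half-plane $P$. Every hyperplane meeting $P$ either crosses $\ell$ or is parallel to $\ell$ inside $P$; iterating skewering on $\ell$, infinitely many hyperplane translates $g^k\hat h_1$ accumulate toward an endpoint of $\ell$, forcing infinitely many hyperplanes to cross both $\hat h_1$ and $\hat h_2$ (since they must intersect $P$ between them). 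This contradicts strong separation, so $g$ is rank one.

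In the second case, suppose no strongly separated pair exists. I want to produce a $\Gamma$-invariant nontrivial partition of the hyperplane set $\mathcal H(X)=\mathcal H_1\sqcup\mathcal H_2$ such that every hyperplane in $\mathcal H_1$ crosses every hyperplane in $\mathcal H_2$; Sageev's disjoint union theorem then gives $X\cong X_1\times X_2$ as cube complexes, with $\Gamma$ preserving the splitting. To do this, first upgrade the assumption: if $\hat h_1$ and $\hat h_2$ are any two disjoint hyperplanes, the set of hyperplanes crossing both is nonempty, and iterating skewering one shows one may even find such a crosser disjoint from a third arbitrarily chosen hyperplane. Starting from any hyperplane $\hat h$, let $\mathcal H_2$ be the transosition closure of the set of hyperplanes disjoint from $\hat h$ under the relation ``share a common disjoint hyperplane'', and $\mathcal H_1$ the complement. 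Using the no-strongly-separated-pair hypothesis one checks that every element of $\mathcal H_1$ crosses every element of $\mathcal H_2$, and minimality together with the Double Skewering Lemma ensures that both classes are nonempty and $\Gamma$-invariant.

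The principal obstacle is the second case: producing the splitting when no strongly separated pair exists. Making the partition well-defined and $\Gamma$-invariant requires a delicate combinatorial argument that propagates ``crossing'' across chains of hyperplanes, and one must verify both that $\mathcal H_1,\mathcal H_2$ are nonempty (this uses that $\Gamma$ has no fixed point at infinity, otherwise a single equivalence class could exhaust $\mathcal H(X)$) and that the resulting product structure is honest, i.e.\ that the combinatorial splitting of hyperplanes induces a metric splitting of $X$ via Sageev's correspondence between pocsets and cube complexes.
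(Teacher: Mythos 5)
The paper itself offers no proof of this statement: it is a survey and the theorem is quoted verbatim from Caprace--Sageev \cite[Theorem A]{MR2827012}, so there is no internal argument to compare yours against. What you have written is a faithful outline of the strategy of that cited paper --- the dichotomy on the existence of a strongly separated pair of hyperplanes, the Double Skewering Lemma, a contracting (rank one) isometry in the first case, and a transversal partition of the hyperplane set feeding into Sageev's product decomposition in the second. So the \emph{approach} is the right one.

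As a proof, however, it has genuine gaps at exactly the two places where the real work lies. First, the Double Skewering Lemma is not ``a standard argument'': in Caprace--Sageev it rests on the Flipping Lemma (for every half-space $h$ there is $g\in\Gamma$ with $g h^{*}\subsetneq h$), and that is where the hypothesis of no fixed point at infinity is actually consumed, via a careful analysis of half-spaces that cannot be flipped; you also need to pass from ``minimal'' to ``essential'' (no half-space at bounded Hausdorff distance from its hyperplane) before any of this machinery applies, and that reduction deserves a sentence. Second, and more seriously, your second case asserts that when no strongly separated pair exists one can build a $\Gamma$-invariant transversal partition $\mathcal H_1\sqcup\mathcal H_2$ by taking a transitive closure of a ``shares a common disjoint hyperplane'' relation; but the statement that every member of $\mathcal H_1$ then crosses every member of $\mathcal H_2$ is precisely the hard content of Caprace--Sageev's irreducibility criterion, and the closure construction as described does not obviously terminate in a transversal pair (a priori the equivalence class of $\hat h$ could swallow all of $\mathcal H(X)$ without the partition ever becoming transversal). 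In the original, this is handled by proving the contrapositive --- an essential, irreducible, finite-dimensional complex with an essential action and no fixed point at infinity \emph{must} contain a strongly separated pair --- via an inductive argument on dimension and a careful study of chains of pairwise crossing hyperplanes. You have correctly flagged this as the principal obstacle, but flagging it does not fill it; as it stands the second branch of the dichotomy is a statement of intent rather than a proof.
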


\begin{remark} If one removes the minimality hypothesis, then the lack of fixed points implies the existence of a convex $\Gamma$-invariant subcomplex $Y\subset X$ such that the action $\Gamma\action Y$ is minimal. If $Y$ does not split as a product then $\Gamma$ contains an element which acts as a rank one isometry on $Y$.
\end{remark}

Having the rank rigidity conjecture in mind, the rank one case appears to be the generic case  and thus it is natural to try to understand rank one isometries on one side \cite{MR2581914} and the possibility for spaces to have rank one isometries \cite{MR2961285}.

\subsection{Flat Closing Conjecture}

There is another conjecture related to the presence of flats in a CAT(0) space. Recall that the flat torus theorem allows to construct flats in a CAT(0) space $X$ from the data of an Abelian free subgroup of $\Isom(X)$. The flat closing conjecture asks for a kind of converse. Let $\Gamma$ be a group of isometries of $X$. A flat $F\subset X$ is \emph{$\Gamma$-periodic} if  there is a subgroup $\Gamma_0\leq \Gamma$ preserving $F$ such that $\Gamma_0\backslash X$ is compact. Observe that if $\Gamma$ is discrete, Theorem \ref{Bieberbach} implies that $\Gamma_0$ is virtually a free Abelian group of rank $\dim(F)$.

\begin{conjecture}[Flat Closing Conjecture  {\cite[6.B$_3$]{MR1253544}}]\index{flat closing!conjecture}\index{conjecture!flat closing} Assume $X$ is a proper CAT(0) space with a proper cocompact action of a discrete group $\Gamma$. Assume that $X$ contains a flat of dimension $n$. Is it true that $X$ contains a $\Gamma$-periodic flat of dimension $n$?
\end{conjecture}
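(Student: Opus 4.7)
The plan is to convert the geometric datum of a single flat $F \subset X$ of dimension $n$ into a free abelian subgroup $\Gamma_0 \leq \Gamma$ of rank $n$ consisting of commuting semi-simple isometries whose axes span an $n$-dimensional flat. Once such a $\Gamma_0$ is produced, the flat torus theorem (together with Bieberbach in the Euclidean factor) gives a $\Gamma_0$-invariant flat $F_0$ of dimension $n$ on which $\Gamma_0$ acts cocompactly by translations, which is exactly the periodic flat required. So the core task splits into (i) manufacturing one hyperbolic element of $\Gamma$ whose axis lies inside (or parallel to a line in) $F$, and (ii) iterating this construction inside centralisers to produce the remaining $n-1$ commuting generators in the remaining directions of $F$.

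For step (i), I would argue by a limiting/pigeonhole procedure. Fix a unit-speed geodesic line $\ell \subset F$ with $\ell(0) = x_0$. By cocompactness, some compact $K \subset X$ meets every $\Gamma$-orbit, so one can select $g_k \in \Gamma$ and $t_k \to +\infty$ with $g_k \ell(t_k) \in K$. Properness of $X$ combined with properness of the action places the sequence $(g_k)$ in a relatively compact subset of $\Isom(X)$; passing to a subsequence one may arrange that $\gamma_{k,k'} := g_{k'}^{-1} g_k$ approximately translates the segment $\ell([0,t_k])$ along $\ell$. A quantitative CAT(0) argument, combined with the flat strip theorem, should show that for $k,k'$ large enough $\gamma_{k,k'}$ is hyperbolic with an axis parallel to $\ell$, and that this axis can in fact be arranged to lie in a flat very close to $F$.

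For step (ii), given commuting hyperbolic elements $\gamma_1,\dots,\gamma_j$ with axes in a sub-flat $F_j \subset F$ of dimension $j$, I would work inside the parallel set $P(\gamma_1,\dots,\gamma_j) \cong \R^j \times Y_j$ on which the centraliser $Z_\Gamma(\gamma_1,\dots,\gamma_j)$ acts, with $F$ descending to an $(n-j)$-flat in $Y_j$. If the induced action on a suitable convex $Z_\Gamma(\gamma_1,\dots,\gamma_j)$-invariant subspace of $Y_j$ remains proper and cocompact, step (i) may be reapplied to produce $\gamma_{j+1}$ commuting with the previous ones and with axis along a fresh direction of $F$, completing the induction.

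The main obstacle, and the reason the conjecture remains open in general, is that neither step is available a priori in an arbitrary proper cocompact CAT(0) setting. The limit elements produced in step (i) need not be hyperbolic: parabolic-like behaviour can develop when translations along $\ell$ are subtly twisted, and one does not know how to rule this out without extra rigidity. The inductive step (ii) is worse, because cocompactness of $\Gamma \action X$ does not in general descend to cocompactness of the centraliser action on $Y_j$, so even after finding $\gamma_1$ the induction can stall. The known special cases --- higher rank Hadamard manifolds via Ballmann--Brin, and certain CAT(0) cube complexes --- exploit precisely the real-analytic structure or the combinatorial wall structure to overcome these two issues. A general attack would likely require a genuinely new compactness result on the space of $n$-flats in $X$ modulo $\Gamma$, or a strong form of the rank rigidity conjecture upgrading the mere presence of a flat into Tits-building-like data on its boundary sphere.
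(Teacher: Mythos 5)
This statement is a \emph{conjecture} --- an open question --- and the paper does not prove it; it only records the known special cases (the case $n=1$, cocompact discrete subgroups of isometry groups of symmetric spaces via the Borel density theorem and regular elements, and the periodicity of the Euclidean de~Rham factor after Eberlein and Caprace--Monod), together with the Higman-group example showing that properness cannot be dropped. Your proposal is therefore not comparable to ``the paper's proof'' because no such proof exists, and, to your credit, you say so explicitly in your final paragraph. What you have written is a plausible but non-functional attack sketch, and the two places where it breaks are exactly where you locate them. In step (i), the pigeonhole argument produces elements $\gamma_{k,k'}$ that move points of $\ell$ a long way roughly in the direction of $\ell$, but nothing forces these elements to be hyperbolic, let alone to have an axis parallel to $\ell$ or contained in a flat near $F$: the displacement function of $\gamma_{k,k'}$ could fail to attain its infimum, and even for semi-simple limits the axis can drift away from $F$ entirely. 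Turning ``an element that coarsely translates along a geodesic in a flat'' into ``a hyperbolic element whose axis bounds no half-flat violation and sits inside a periodic flat'' is essentially the content of the conjecture, not a lemma one can invoke. In step (ii), the induction through centralisers stalls because, as you note, properness and cocompactness of $\Gamma\curvearrowright X$ give no control on the action of $Z_\Gamma(\gamma_1,\dots,\gamma_j)$ on the cross-section $Y_j$ of the parallel set; without cocompactness there, step (i) cannot be rerun.

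One factual caution: the paper's evidence list does not include a flat closing theorem for CAT(0) cube complexes --- what is known there is the rank rigidity theorem of Caprace--Sageev, which is a different (though related) statement --- so you should not cite cube complexes as a ``known special case'' of flat closing. If you want to align your discussion with what the paper actually establishes, the three proved instances are: $n=1$ (Swenson's argument, which uses only that an infinite discrete group acts geometrically), the symmetric space case (where the Borel density theorem supplies a regular element $\gamma\in\Gamma$ whose centraliser is virtually $\R^r$, and the Bieberbach theorem extracts $\Z^r$), and the $\Gamma$-periodicity of the Euclidean de~Rham factor. Note that the symmetric space argument succeeds precisely because Zariski density substitutes for the missing compactness result on the space of flats that you correctly identify as the main obstruction in general.
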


Observe that a positive answer to that question would imply that $\Gamma$ contains $\Z^n$ as a subgroup, thanks to Theorem \ref{Bieberbach}. The conjecture is known to be true for $n=1$. Actually, the proof does not even use the existence of a geodesic. It only uses that some infinite discrete group acts geometrically on a proper CAT(0) \cite[Theorem 11]{MR1802725}.

The conjecture also holds for subgroups of discrete subgroups of isometry groups of symmetric spaces. Recall that the real rank of this isometry group coincides with the rank of its symmetric space.

\begin{proposition}[{\cite{MR0302822}}]Let $X$ be symmetric space and $\Gamma$ a discrete subgroup of $\Isom(X)$ acting cocompactly. Then $\Gamma$ contains a free Abelian subgroup $\Z^r$ where  $r=\rank(X)$.
\end{proposition}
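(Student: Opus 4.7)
The plan is to find a maximal flat $F\subset X$ of dimension $r=\rank(X)$ and exhibit enough elements of $\Gamma$ preserving $F$ to produce a $\mathbb{Z}^r$ acting cocompactly on $F$; the Bieberbach theorem (Theorem \ref{Bieberbach}) will then finish the job. Fix a basepoint $x_0\in F$, so that $F$ is a single $A$-orbit, where $A\leq G=\Isom^\circ(X)$ is the corresponding maximal $\R$-split torus, and the setwise stabilizer $N(F)\leq G$ surjects onto a group of isometries of $F\cong\R^r$ containing all translations with finite quotient (the spherical Weyl group).

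Step one would be to argue that $\Gamma\cap N(F)$ acts cocompactly on $F$. Cocompactness of $\Gamma\curvearrowright X$ gives a constant $R>0$ such that every point of $X$ lies in the $R$-ball around some $\Gamma$-translate of $x_0$. For a sequence of points $y_n\in F$ going to infinity in a chosen direction $v\in F$, pick $\gamma_n\in\Gamma$ with $d(\gamma_n x_0,y_n)\leq R$. The isometries $\gamma_n$ send $x_0$ near $F$ and, by compactness of the stabilizer of a flat modulo its cocompact translation subgroup (i.e. using that $N(F)/A$ is compact in $G$ and that $G/\Gamma$ has finite volume and is compact), one extracts a subsequence producing an element $\gamma\in\Gamma\cap N(F)$ acting on $F$ as a translation of positive length along $v$. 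Iterating this in $r$ linearly independent directions, or repeating the argument inside the transverse space of an axis (which is again a symmetric space of smaller rank, allowing an induction on $r$), yields elements $\gamma_1,\dots,\gamma_r\in\Gamma\cap N(F)$ whose translation parts span a lattice in $F$.

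Step two is then purely Euclidean: the subgroup $\Gamma_0=\Gamma\cap N(F)$ acts properly and cocompactly on $F\simeq\R^r$ by isometries, so the Bieberbach theorem supplies a finite-index subgroup isomorphic to $\mathbb{Z}^r$ generated by $r$ linearly independent translations. This subgroup sits inside $\Gamma$, providing the required free Abelian group of rank $r$. The consistency check that the $\gamma_i$ are indeed semi-simple and commute is automatic: they lie in $\Gamma_0$ and act as translations on the flat $F$, so the flat torus theorem identifies $F$ as a $\mathbb{Z}^r$-invariant flat with diagonal action by translations.

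The hard part will be step one, namely producing enough elements of $\Gamma$ that actually stabilize the flat $F$ setwise, rather than merely passing near it. The naive compactness argument yields elements $\gamma\in\Gamma$ with $\gamma F$ at uniformly bounded Hausdorff distance from $F$; to upgrade this to $\gamma F=F$ requires the rigidity of maximal flats in symmetric spaces (two maximal flats within finite Hausdorff distance coincide) together with the fact that, by Cartan decomposition, an isometry of $X$ moving $x_0$ a bounded amount and sending $F$ close to $F$ lies in a bounded neighbourhood of $N(F)$ in $G$; hence the limiting element truly normalizes $F$. This is where the semi-simple structure of $G$, and not merely the CAT(0) geometry, is used in an essential way.
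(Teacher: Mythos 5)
Your step two (Bieberbach applied to a cocompact action on a flat) matches the paper's endgame, but your step one has a genuine gap, and it is exactly where the whole content of the proposition lies. First, the strategy of fixing a maximal flat $F$ in advance and then hoping that $\Gamma\cap N(F)$ acts cocompactly on $F$ cannot work: in higher rank only countably many maximal flats are $\Gamma$-periodic, while there are uncountably many maximal flats, so for a generic choice of $F$ the group $\Gamma\cap N(F)$ is trivial. Second, the compactness argument you sketch does not produce elements of $N(F)$. The condition $d(\gamma_n x_0,y_n)\leq R$ with $y_n\in F$ controls a single orbit point; it gives no bound on the Hausdorff distance between $\gamma_n F$ and $F$, so the rigidity statement ``two maximal flats at finite Hausdorff distance coincide'' never becomes applicable. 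In the Cartan decomposition $\gamma_n=k_na_nk_n'$ nothing constrains $k_n'$, so $\gamma_n$ need not lie in any bounded neighbourhood of $N(F)$, and the extracted limit need not normalize $F$. You flag this as ``the hard part,'' but the fix you propose presupposes the very closeness of $\gamma_nF$ to $F$ that is missing.

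The paper resolves this by reversing the order of quantifiers: instead of choosing the flat first and looking for group elements, it first finds a special element of $\Gamma$ and then takes the flat adapted to it. By the Borel density theorem $\Gamma$ is Zariski dense in $G=\Isom(X)$, and since the regular elements form a nonempty Zariski-open set, $\Gamma$ contains a regular element $\gamma$. Its centralizer $Z_G(\gamma)$ is virtually $\R^r$ (a Cartan subgroup), and the standard fact that for a cocompact lattice $\Gamma$ the intersection $Z_G(\gamma)\cap\Gamma$ is a cocompact lattice in $Z_G(\gamma)$ hands you precisely the periodic flat your argument was trying to manufacture; Bieberbach then yields $\Z^r$. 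If you want to salvage a geometric proof, you would need some substitute for this algebraic input --- the difficulty of doing so is essentially why the flat closing conjecture is open beyond the symmetric-space case.
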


\begin{proof} Let $G$ be the isometry group of $X$. This is a semi-simple Lie group with trivial center and no compact factor. The subgroup $\Gamma$ is a lattice in $G$ and by the Borel density theorem, the Zariski closure of $\Gamma$ is $G$ itself. The set of regular elements (elements with centralizer of minimal dimension) in $G$ is Zariski open and thus $\Gamma$ contains a regular element $\gamma$.  The centralizer $Z_G(\gamma)$ of $\gamma$ contains $Z_G(\gamma)\cap\Gamma$ as a lattice. Since $Z_G(\gamma)$ is virtually $\R^r$, the Bieberbach theorem (Theorem \ref{Bieberbach}) implies that $\Gamma$ contains a free Abelian group of rank $r$.
\end{proof}

One has to be aware that without properness, the conjecture is false. The Higman group acts cocompactly on a CAT(0) cube complex of dimension 2. This complex contains a lot of flats of dimension 2 but the Higman group does not contain any subgroup isomorphic to $\Z^2$ \cite[Proposition G]{Martin:2015qy}. Neither the space nor the action is proper. 
More generally, for manifolds, one has the following result in the special case where the flat is actually the Euclidean de Rham factor.

\begin{theorem}[{\cite{MR710052}}] Let $X$ be a simply-connected Riemannian manifold of non-positive curvature and $\Gamma$ a group acting freely and properly discontinuously on $X$. If $\Gamma\backslash X$ has finite volume then the rank of the maximal normal Abelian subgroup coincides with the dimension of the Euclidean de Rham factor.
\end{theorem}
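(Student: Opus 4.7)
The plan is to compare the dimension $k$ of the Euclidean factor in the de Rham decomposition $X = E \times X_0$ (where $X_0$ has no Euclidean local factor) with the rank $r$ of the maximal normal Abelian subgroup $N \triangleleft \Gamma$, and show the two inequalities $r \leq k$ and $r \geq k$ separately.

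First I would set up the splitting equivariantly. Since the Euclidean de Rham factor is distinguished (it is characterised metrically as the maximal flat splitting), $\Isom(X)$ preserves the decomposition up to a permutation of the irreducible factors of $X_0$. Passing to a finite-index subgroup $\Gamma' \leq \Gamma$ one obtains projections $p_E \colon \Gamma' \to \Isom(E)$ and $p_0 \colon \Gamma' \to \Isom(X_0)$, and one needs to check that maximal normal Abelian subgroups are unchanged by this finite-index passage (the rank of a free Abelian group is a commensurability invariant).

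For the lower bound $r \geq k$, I would argue that the finite-volume hypothesis forces $p_E(\Gamma')$ to act on $E$ with finite covolume (one integrates volumes on $\Gamma'\backslash X$ along the fibration onto $p_0(\Gamma')\backslash X_0$, using that $X_0$ has positive local volume). Once $p_E(\Gamma')$ is virtually crystallographic on $E$, Theorem \ref{Bieberbach} supplies a free Abelian subgroup of translations of rank $k$. I would then pull this back to $\Gamma'$ and restrict to those elements acting trivially on $X_0$: these are Clifford translations of $X$, form a free Abelian subgroup $A$ of rank $k$, and are characteristic inside the translation subgroup of $\Isom(E)$, hence normal in $\Gamma$.

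For the upper bound $r \leq k$, I would apply the flat torus theorem to $N$, which is free Abelian (by the Bieberbach-type constraints coming from the ambient geometry and the solvable subgroup framework). This yields a closed convex $N$-invariant subset $Y = Z \times \mathbb{R}^r \subset X$ on which $N$ acts by translations along $\mathbb{R}^r$. The union of all such parallel $\mathbb{R}^r$-flats is $\Gamma$-invariant because $N$ is normal, so the $\mathbb{R}^r$ direction field in $TX$ that it defines is a parallel $\Gamma$-invariant distribution descending to $\Gamma\backslash X$. A parallel flat subdistribution of a non-positively curved manifold must lie in the Euclidean de Rham factor, giving $r \leq \dim E = k$.

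The main obstacle is this last step: one must show that the $\mathbb{R}^r$ translation directions coming from normality actually align with the Euclidean de Rham factor rather than merely producing tangent flats in $X_0$. This requires combining normality of $N$ in $\Gamma$, the finite-volume hypothesis (so that the parallel distribution on $\Gamma\backslash X$ is well controlled and one can invoke a de Rham-type holonomy argument), and the absence of local Euclidean factors in $X_0$ to rule out any translation direction escaping $E$.
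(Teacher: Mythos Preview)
The paper does not give its own proof of this theorem; it is quoted from Eberlein \cite{MR710052}, and the only additional information supplied is the remark immediately following the statement: what Eberlein actually establishes is that the maximal normal Abelian subgroup of $\Gamma$ coincides with the set of Clifford translations in $\Gamma$. Your outline gestures toward Clifford translations but has genuine gaps in both directions.

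For $r \geq k$: after pulling back a rank-$k$ translation lattice from $p_E(\Gamma')$ you propose to ``restrict to those elements acting trivially on $X_0$''. Nothing you have written guarantees that any nontrivial element of $\Gamma'$ acts trivially on $X_0$; the kernel of $p_0$ could a priori be trivial. Your fibration argument for the covolume of $p_E(\Gamma')$ is also problematic, since $\Gamma'\backslash X$ need not fibre over $p_0(\Gamma')\backslash X_0$ in any useful sense. Producing Clifford translations inside $\Gamma$ is precisely the substantive content of the theorem; it does not drop out of Bieberbach applied to a projection.

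For $r \leq k$: the flat torus theorem only produces a closed convex $N$-invariant subset $Y \subset X$ splitting as $Z \times \mathbb{R}^r$; your ``parallel distribution'' is defined on $Y$, not on $X$. The step you are missing is that $\mathrm{Min}(N)$ is $\Gamma$-invariant (because $N$ is normal), closed and convex, and then the finite-volume hypothesis forces $\mathrm{Min}(N) = X$. Once $X$ itself splits as $Z \times \mathbb{R}^r$ with $N$ acting by translations on the second factor, every element of $N$ is a Clifford translation and $r \leq k$ is immediate. This is the route indicated by the paper's remark, and it dispatches both inequalities simultaneously: the Clifford translations in $\Gamma$ form a normal Abelian subgroup, and conversely every element of a normal Abelian subgroup is forced, via the argument above, to be a Clifford translation.
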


Actually, it is shown that this maximal normal Abelian subgroup coincides with Clifford translations in $\Gamma$. Thus this theorem implies that the Euclidean de Rham factor is $\Gamma$-periodic, and that any subflat is so.
 
 For more general CAT(0) spaces, one has the following generalisation of Eberlein's result still dealing with $\Gamma$-periodicity of the Euclidean de Rham factor. 

\begin{theorem}[{\cite[Theorem 1.3(i)]{MR2574741}}]Let $X$ be a proper CAT(0) space such that $\Isom(X)$ acts minimally and cocompactly. If $\Gamma$ is a finitely generated lattice then $\Gamma$ virtually splits as $\Z^r\times\Gamma'$ where $r$ is the rank of the Euclidean de Rham factor.
\end{theorem}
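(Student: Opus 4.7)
My plan is to begin with the canonical de Rham-type decomposition of a proper CAT(0) space with cocompactly acting isometry group: $X$ splits isometrically as $X \cong E \times X'$, where $E = \R^r$ is the maximal Euclidean de Rham factor and $X'$ has none. This decomposition is canonical, so a finite-index subgroup of $\Isom(X)$—hence of $\Gamma$—preserves it. After replacing $\Gamma$ with this finite-index sublattice (still a finitely generated lattice), $\Gamma \subseteq \Isom(E) \times \Isom(X')$, and I obtain projections $\pi_E\colon \Gamma \to \Isom(E)$ and $\pi_{X'}\colon \Gamma \to \Isom(X')$.

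\textbf{Trivializing the rotation part.} Next, decompose $\Isom(E) = \Or(r) \ltimes \R^r$ and let $\rho\colon \Gamma \to \Or(r)$ be the composition of $\pi_E$ with the rotation projection. A Bieberbach-type argument, combined with the maximality of $E$ as a Euclidean de Rham factor (a positive-dimensional closure of $\rho(\Gamma)$ in $\Or(r)$ would, coupled with the normality of translations, produce an additional Euclidean direction and enlarge the Euclidean factor), forces $\overline{\rho(\Gamma)}$ to be finite. Pass to $\Gamma_1 := \ker \rho$, a finite-index subgroup. Now $\pi_E(\Gamma_1) \subseteq \R^r$; using that $\Gamma$ has finite covolume in $\Isom(X)$ and that $X'$ carries no Euclidean direction, one concludes that $\pi_E(\Gamma_1)$ is a full-rank lattice in $\R^r$, hence isomorphic to $\Z^r$.

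\textbf{Splitting the central extension (main obstacle).} Setting $\Lambda := \ker \pi_E|_{\Gamma_1}$, I obtain the short exact sequence
\[
1 \longrightarrow \Lambda \longrightarrow \Gamma_1 \longrightarrow \Z^r \longrightarrow 1.
\]
Since $\R^r$ is central in $\Isom(E)$ and commutes with everything in $\Isom(X')$, this is a \emph{central} extension of $\Lambda$ by $\Z^r$. To produce the desired virtual direct-product splitting, I would choose lifts $\tilde{s}_i \in \Gamma_1$ of the generators of $\Z^r$ and study their commutators $c_{ij} := [\tilde{s}_i, \tilde{s}_j] \in \Lambda$. The principal difficulty is showing that, after replacing each $\tilde{s}_i$ by a suitable power, all the $c_{ij}$ become trivial—equivalently, that the extension class in $H^2(\Z^r, \Lambda)$ vanishes after restriction to a finite-index subgroup. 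This is where finite generation of $\Gamma$ (hence of $\Lambda$) and the explicit cohomology of $\Z^r$ enter: a Lyndon--Hochschild--Serre or direct Hirsch-length argument yields the needed vanishing. Once achieved, the subgroup $\langle \tilde{s}_1^N, \dots, \tilde{s}_r^N\rangle \cong \Z^r$ commutes with $\Lambda$ and intersects it trivially; together they generate a finite-index subgroup of the original $\Gamma$ isomorphic to $\Z^r \times \Gamma'$, where $\Gamma'$ is a finite-index subgroup of $\Lambda$. The two places where care is needed—ruling out nondiscrete rotation part using maximality of the Euclidean factor, and splitting the central extension virtually—are precisely the points where Caprace--Monod's detailed structure theory for proper cocompact CAT(0) spaces is indispensable.
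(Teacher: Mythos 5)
The survey states this result with a citation to Caprace--Monod and gives no proof, so your attempt can only be measured against their argument; measured that way, it has two genuine gaps, both at the places you yourself flag as ``where care is needed.''

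First, the crux of the theorem is to show that $\Gamma$ meets the translation group $\R^r$ of the Euclidean factor in a \emph{lattice} of $\R^r$ --- equivalently, that the image of $\Gamma$ in $\Isom(X)/\R^r$ is discrete. You bypass this by asserting that $\pi_E(\Gamma_1)$ is ``a full-rank lattice in $\R^r$, hence isomorphic to $\Z^r$.'' But a full-rank subgroup of $\R^r$ need not be discrete (e.g.\ the image of $\Z[\sqrt2]$ in $\R$), and projections of lattices in products are typically dense, not discrete --- that is the whole phenomenon of irreducible lattices. Your argument for finiteness of the rotation part is also not valid: a positive-dimensional closure of $\rho(\Gamma)$ in $\Or(r)$ does not ``enlarge the Euclidean factor'' ($\Isom(\R^r)$ itself has rotation part all of $\Or(r)$), and Bieberbach's theorems require the Euclidean image to be discrete, which is exactly what is not known at this stage. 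This is where finite generation of $\Gamma$ and the geometric Borel density theorem (Theorem 1.1 of the same Caprace--Monod paper, quoted earlier in this survey) do the real work; no substitute for them appears in your proposal. The correct order of events is: first show $\Gamma\cap\R^r$ is a lattice in $\R^r$, and only then deduce that the rotation part is virtually trivial, because an orthogonal transformation fixing a full-rank lattice pointwise is the identity and the stabiliser of a lattice in $\Or(r)$ is finite.

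Second, your extension is set up the wrong way round and the final splitting is unjustified. The sequence $1\to\Lambda\to\Gamma_1\to\Z^r\to1$ is not a central extension: centrality would mean $\Lambda\subset Z(\Gamma_1)$, which is absurd, and the translations are normal but not central in $\Isom(E)=\Or(r)\ltimes\R^r$. Moreover, making the commutators $[\tilde s_i,\tilde s_j]$ vanish after passing to powers says nothing about $[\tilde s_i,\Lambda]$, so the concluding claim that $\langle\tilde s_1^N,\dots,\tilde s_r^N\rangle$ ``commutes with $\Lambda$'' has no justification. The object that actually is central (once the rotation part has been killed, so that $\Gamma_1\leq\R^r\times\Isom(X')$) is $A:=\Gamma_1\cap\R^r\cong\Z^r$, and the extension to be split virtually is $1\to A\to\Gamma_1\to\Gamma_1/A\to1$; its class dies in $H^2(\Gamma_1/A;\R^r)$ because $\R^r\cdot\Gamma_1$ splits as $\R^r\times p_{X'}(\Gamma_1)$, and one then kills the remaining torsion obstruction on a finite-index subgroup using finite generation. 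That is the step your cohomological sketch should be aimed at, but as written it is aimed at the wrong exact sequence.
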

\phantomsection
\section*{Acknowledgements}
\addcontentsline{toc}{section}{Acknowledgements}

I would like to thank Athanase Papadopoulos who proposed me to write this survey article. It was a great opportunity for me to try to circumnavigate the subject of groups acting on CAT(0) spaces and gather the important (in my mind and to the best of my knowledge) facts about this domain of group actions. I was happy to extract some structure from the many papers in this very active field since a few decades.

I would like to thank Indira Chatterji and Michah Sageev for answering a few questions about CAT(0) cube complexes.
\printindex
\bibliographystyle{habbrv}
\bibliography{../../Latex/Biblio/biblio.bib}
\end{document}